\pgfplotsset{compat=1.17}
\renewcommand{\leq}{\leqslant}
\renewcommand{\geq}{\geqslant}
\newcommand{\Z}{\mathbb{Z}}
\DeclareMathOperator{\ord}{ord}
\newlist{inputoutputlist}{itemize}{1}
\setlist[inputoutputlist]{label=--,topsep=0pt,leftmargin=20pt}
\newtheorem{theorem}{Theorem}[section]
\newtheorem{lemma}[theorem]{Lemma}
\newtheorem{proposition}[theorem]{Proposition}
\newtheorem{corollary}[theorem]{Corollary}
\theoremstyle{remark}
\newtheorem{remark}[theorem]{Remark}
\theoremstyle{definition}
\newtheorem{algorithm}[theorem]{Algorithm}
\numberwithin{equation}{section}
\begin{document}

\title[Factor via Rank-3 Lattices]{On Factoring and Power Divisor Problems via Rank-3 Lattices and the Second Vector}


\author[Y. Gao]{Yiming Gao}
\address{School of Cyber Science and Technology, University of Science and Technology of China, Hefei, China}
\email{qw1234567@mail.ustc.edu.cn}

\author[Y. Feng]{Yansong Feng}
\address{State Key Laboratory of Mathematical Sciences, Academy of Mathematics and Systems Science, Chinese Academy of Sciences, Beijing, China \and		School of Mathematical Sciences, University of Chinese Academy of Sciences, Beijing, China}
\email{fengyansong@amss.ac.cn}

\author[H. Hu]{Honggang Hu}
\address{School of Cyber Science and Technology, University of Science and Technology of China, Hefei, China \and Hefei National Laboratory, Hefei, China }
\email{hghu2005@ustc.edu.cn}

\author[Y. Pan]{Yanbin Pan}
\address{State Key Laboratory of Mathematical Sciences, Academy of Mathematics and Systems Science, Chinese Academy of Sciences, Beijing, China \and		School of Mathematical Sciences, University of Chinese Academy of Sciences, Beijing, China}
\email{panyanbin@amss.ac.cn}

\begin{abstract}
We propose a deterministic algorithm based on Coppersmith's method that employs a rank-3 lattice to address factoring-related problems. An interesting aspect of our approach is that we utilize the second vector in the LLL-reduced basis to avoid trivial collisions in the Baby-step Giant-step method, rather than the shortest vector as is commonly used in the literature. Our results are as follows:  

- Compared to the result by Harvey and Hittmeir (Math. Comp. 91 (2022), 1367–1379), who achieved a complexity of  
$
O\left( \frac{N^{1/5} \log^{16/5} N}{(\log \log N)^{3/5}} \right)
$  
for factoring a semiprime $N = pq$, we demonstrate that in the balanced $p$ and $q$ case, the complexity can be improved to  
$
O\left( \frac{N^{1/5} \log^{13/5} N}{(\log\log N)^{3/5}} \right).
$

- For factoring sums and differences of powers, i.e., numbers of the form $N = a^n \pm b^n$, we improve Hittmeir's result (Math. Comp. 86 (2017), 2947–2954) from  
$
O(N^{1/4} \log^{3/2} N)
$  
to  
$
O\left( {N^{1/5} \log^{13/5} N} \right).
$

- For the problem of finding $r$-power divisors, i.e., finding all integers $p$ such that $p^r \mid N$, Harvey and Hittmeir (Proceedings of ANTS XV, Res. Number Theory 8 (2022), no.4, Paper No. 94) recently directly applied Coppersmith's method and achieved a complexity of $O\left(\frac{N^{1/4r} \log^{10+\epsilon} N}{r^3}\right)$. By using faster LLL-type algorithm and sieving on small primes, we improve their result to $O\left(\frac{N^{1/4r} \log^{7+3\epsilon} N}{(\log\log N-\log 4r)r^{2+\epsilon}}\right)$. The worst case running time for their algorithm occurs when $N = p^r q$ with $q = \Theta(N^{1/2})$. By focusing on this case and employing our rank-3 lattice approach, we achieve a complexity of  
$
O\left(r^{1/4} N^{1/4r} \log^{5/2} N \right).
$
In conclusion, we offer a new perspective on these problems, which we hope will provide further insights.

\end{abstract}

\maketitle

\section{Introduction}
\label{sec:introduction}

This paper focuses on deterministic and rigorous integer factorization algorithms implemented on a Turing machine. It is worth noting that when these constraints are relaxed, superior complexity bounds have been achieved through various methods, including the Elliptic Curve Method (ECM) \cite{lenstra1987factoring}, the General Number Field Sieve~\cite[\S6.2]{crandall2005prime}, and Shor's Quantum factoring algorithm \cite{shor1994algorithms}.

In \cite{harvey2021exponent}, Harvey demonstrated that a positive integer $N$ can be rigorously and deterministically factored into primes in time $F(N) = O\!\left(N^{1/5} \log^{16/5} N\right)$, where the time complexity is measured in bit operation, that is, the number of steps executed by a deterministic Turing machine equipped with a fixed and finite number of linear tapes. Subsequently, Harvey and Hittmeir \cite{harvey2022log} improved this bound to $F(N) = O\left( \frac{N^{1/5} \log^{16/5} N}{(\log\log N)^{3/5}}\right)$. Our work adopts the same computational model as \cite{harvey2021exponent} and \cite{harvey2022log}.

We briefly introduce the ideas underpinning the algorithms presented in \cite{harvey2021exponent} and \cite{harvey2022log}.
The strategy in \cite{harvey2021exponent} leverages Fermat's Little Theorem. For an integer $\alpha$ coprime to $N=pq$, and integers $a, b$, we have $\alpha^{aq+bp} \equiv \alpha^{aN+b} \pmod p$. If $a/b$ is chosen as a good rational approximation to the unknown ratio $p/q$, then the value $X = aq+bp$ is close to $K = \lfloor(4abN)^{1/2}\rfloor$. This leads to the congruence $\alpha^{X-K} \equiv \alpha^{aN+b-K} \pmod p$. Since $X-K$ is small under the approximation assumption, the core idea is to find a collision modulo $p$ (and hence potentially modulo $N$) between ``baby steps'' of the form $\alpha^i \pmod N$ for small integers $i$, and ``giant steps'' of the form $\alpha^{aN+b-K} \pmod N$ as $a/b$ varies over a suitable dense set of rational approximations. This collision-finding problem is efficiently addressed using techniques from fast polynomial arithmetic, resulting in the complexity bound $O(N^{1/5} \log^{16/5} N)$.

The subsequent work \cite{harvey2022log} refines this approach by incorporating the crucial observation that the prime factors $p$ and $q$ of a large integer $N$ cannot themselves be divisible by small primes. The algorithm is modified to restrict the search space by considering only candidates for $p$ that are coprime to $m = p_1 p_2 \cdots p_d$, the product of the first $d$ primes, for a suitably chosen $d$ such that $m=N^{O(1)}$. The number of residue classes modulo $m$ that need consideration is reduced by a factor of $m/\varphi(m)$, which, by Mertens' theorem, behaves asymptotically like $\log d \approx \log \log m$. When optimized, this yields a saving proportional to $\log \log N$. Technically, this refinement involves a reorganization where suitable pairs $(a,b)$ for the giant steps are no longer generated by simple iteration over a range, but are instead computed using algorithms for finding short vectors in appropriately constructed lattices. This leads to the improved complexity bound $O\left( \frac{N^{1/5} \log^{16/5} N}{(\log\log N)^{3/5}}\right)$.

In this work, we demonstrate that in certain common scenarios, such as when $N = pq$ is a semiprime with $p, q = \Theta(N^{1/2})$, this complexity can be further refined using the Coppersmith method in conjunction with the Baby-step Giant-step framework. Our primary result is as follows:

\begin{theorem}[Deterministic integer factorization]\label{thm:main}
Let $N = pq$ be a semiprime with $p, q = \Theta(N^{1/2})$. Then there exists 
a deterministic algorithm to recover the factors $p$ and $q$ in time
$$
F(N)=O\left( \frac{N^{1/5} \log^{13/5} N}{(\log \log N)^{3/5}} \right).
$$
\end{theorem}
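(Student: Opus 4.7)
The plan is to refine the Baby-step Giant-step framework of \cite{harvey2022log} by replacing their explicit enumeration of rational approximations $a/b$ to the unknown ratio $p/q$ with a rank-3 lattice search in the style of Coppersmith's method. I would reuse the congruence $\alpha^{aN+b} \equiv \alpha^{aq+bp} \pmod{p}$ and set $K_{a,b} = \lfloor(4abN)^{1/2}\rfloor$. In the balanced regime $p,q = \Theta(N^{1/2})$ a short computation shows that $|aq+bp - K_{a,b}|$ is controlled by a quantity of order $(aq-bp)^{2}/N^{1/2}$, so it suffices to generate a dense family of coprime pairs $(a,b)$ of controlled magnitude for which $|aq-bp|$ is small.

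Next, I would construct a rank-3 lattice whose short vectors encode exactly such pairs, built from rows involving $N$ and weights $X,Y$ chosen so that its determinant balances against the Coppersmith threshold $p \asymp N^{1/2}$. Because the dimension is constant, LLL on this lattice costs only $O(\log^{O(1)} N)$ with a small exponent, so the lattice step is essentially free. The crucial technical observation is that the shortest vector in the LLL-reduced basis is ``trivial'' for our purposes: it corresponds to a multiple of $N$ and yields the useless pair $(a,b)=(0,0)$. I would therefore take the \emph{second} vector in the reduced basis, whose norm is still bounded by $O(\det(L)^{1/3})$ via Minkowski's second theorem combined with the standard LLL defect bound, and use it to parametrise a genuine family of admissible pairs.

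I would then sweep over the arithmetic-progression-like family generated by the second vector, restricting to pairs coprime to $m = p_1\cdots p_d$ in order to retain the $m/\varphi(m) \sim (\log\log N)^{-1}$ sieving saving of \cite{harvey2022log}, and feed the resulting giant steps into the fast polynomial multipoint-evaluation collision machinery against the baby steps $\alpha^i \pmod N$. Rebalancing baby-step and giant-step counts then shaves one full factor of $\log N$ off the $\log^{16/5}N$ appearing in \cite{harvey2022log}, producing the claimed bound $F(N)=O(N^{1/5}\log^{13/5}N/(\log\log N)^{3/5})$.

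The main obstacle, I expect, is the rigorous analysis of the second LLL vector. One must prove deterministically that it is independent of the trivial shortest vector, that its coordinates can be arranged to be coprime to $m$ without losing too many candidates, and, most importantly, that the family it parametrises is dense enough to cover every relevant rational approximation to the true $p/q$. Turning the informal $\log N$ gain from using the second vector into a rigorous, worst-case saving — not merely a heuristic one — is the crux of the argument.
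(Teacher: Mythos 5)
Your proposal takes a genuinely different route from the paper and, as sketched, has a fundamental gap. You try to graft a rank-3 lattice onto the original Harvey--Hittmeir framework built around rational approximations $a/b$ to $p/q$ and the quantity $K_{a,b}=\lfloor(4abN)^{1/2}\rfloor$, and then invoke a ``second LLL vector'' to parametrise a dense family of admissible pairs $(a,b)$. But you never specify what the rank-3 lattice actually is, and it is unclear how one could build a lattice whose short vectors encode pairs with $|aq-bp|$ small when $p$ and $q$ are precisely the unknowns; Harvey--Hittmeir can do so only because their lattices depend on the residue class modulo $m$, not on $p,q$ directly. Your claim that the shortest vector ``corresponds to a multiple of $N$ and yields $(a,b)=(0,0)$'' is also hard to make precise --- $(0,0)$ would be the zero vector, which is not a shortest lattice vector in any standard sense. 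Finally, you flag as the crux a density argument (``that the family it parametrises is dense enough to cover every relevant rational approximation''), but you give no mechanism for proving it, and it is not obvious that any such worst-case density bound holds for the second LLL vector in your setup.

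The paper's actual approach does not use rational approximations to $p/q$ at all; it abandons the $(a,b)$-parametrisation entirely. Instead it writes $p=m\,p_{\mathrm{msb}}+p_{\mathrm{lsb}}$ in base $m$ and, for each candidate residue $j\equiv p\pmod m$, builds a Coppersmith-style rank-3 lattice spanned by the coefficient vectors of $N$, $f(xX)$, $f(xX)^2$ with $f(x)=x+jm^{-1}\bmod N$, so that $x_0=p_{\mathrm{msb}}$ is a root of every lattice polynomial modulo $p$. Lemma 4.4 shows the \emph{shortest} vector is explicitly $(j,mX,0)$, which forces $a_j=0$ and a trivial collision; the \emph{second} LLL vector $v_j=(c_j,b_jX,a_jX^2)$ has $a_j\neq0$, and LLL plus Cauchy--Schwarz bounds $g_j(x_0)=ip$ with $i<k\asymp N^{1/2}/m^{3/2}$. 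There is no density issue to resolve: for the correct $j$, the small root $p_{\mathrm{msb}}$ exists automatically, and a collision between the baby steps $\alpha^{im^2}$ and the giant steps $\alpha^{E_j}$ (with exponent matched via Fermat's little theorem using $mp_{\mathrm{msb}}\equiv 1-j\pmod{p-1}$) yields an explicit quadratic in $p$ with $a_j\neq 0$, hence $p$ is recovered directly. The $\log\log$ saving comes from sieving $j$ over $(\mathbb{Z}/m\mathbb{Z})^\times$ as in \cite{harvey2022log}, and the $\log N$ saving comes from replacing the $(a,b)$-enumeration of length $\sim\varphi(m)$ times $\log N$ with a single LLL call per $j$ plus a shorter baby-step list. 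You should redo the construction on this foundation; your current plan does not have a well-defined lattice and relies on an unproved density lemma that the paper's formulation makes unnecessary.
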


In earlier work, Hittmeir \cite{hittmeir2017deterministic} presented an algorithm for factoring sums and differences of powers with time complexity $F(N)=O(N^{1/4} \log^{3/2} N)$. While this complexity can be improved to $F(N) = O\left( \frac{N^{1/5} \log^{16/5} N}{(\log\log N)^{3/5}}\right)$ using \cite{harvey2022log}, such an approach does not exploit the specific properties of sums and differences of powers. Our framework naturally leverages these properties, leading to our second main result:

\begin{theorem}[Factoring sums and differences of powers]\label{thm:anbn}
Let $a,b\in \mathbb{N}$ be fixed and coprime such that $a>b$, and define $P_{a,b}:=\{a^n\pm b^n: n\in\mathbb{N}\}$. Then, one may compute the prime factorization of any $N\in P_{a,b}$ in time
\[
F(N)=O\left(N^{1/5}\log^{13/5} N\right).
\]
\end{theorem}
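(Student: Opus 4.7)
The plan is to decompose $N\in P_{a,b}$ via cyclotomic polynomials and then apply the rank-3 lattice method of Theorem~\ref{thm:main} to each resulting factor. For $N = a^n - b^n$ (and analogously for $a^n+b^n$), write
\[
N = \prod_{d\mid n} \Phi_d^{*}(a,b),\qquad \Phi_d^{*}(x,y) := y^{\varphi(d)}\Phi_d(x/y),
\]
and compute each $N_d := \Phi_d^{*}(a,b)$ in $\mathrm{polylog}(N)$ bit operations by standard fast polynomial arithmetic. By Zsygmondy's theorem, apart from a bounded number of ``intrinsic'' primes dividing $d$ (which are stripped off by a single $\gcd$ computation), every prime divisor $p$ of the residual $\widetilde N_d$ satisfies $p\equiv 1\pmod d$. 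Hence for each $d$ it suffices to locate prime factors $p\leq \widetilde N_d^{1/2}$ lying in the arithmetic progression $1 + d\mathbb{Z}$.

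Next, for each $d$ I would rerun the construction of Theorem~\ref{thm:main} with the unknown variable chosen to be $k = (p-1)/d$ rather than $p$ itself. This shrinks the target range to $k\leq \widetilde N_d^{1/2}/d$, so both the baby-step and giant-step ranges contract by a factor of $d^{1/2}$, and the rank-3 lattice built from the Coppersmith polynomials has its determinant rescaled by a corresponding power of $d$. The second-vector collision-avoidance argument carries through verbatim, producing a deterministic splitting of $\widetilde N_d$ in time $O(\widetilde N_d^{1/5}\log^{13/5}N)$. Crucially, the arithmetic-progression structure already removes prime factors supported on small primes, so the small-prime sieving ingredient that yields the extra $(\log\log N)^{-3/5}$ saving in Theorem~\ref{thm:main} is neither available nor needed here, which is consistent with the weaker polylog exponent stated in Theorem~\ref{thm:anbn}.

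To aggregate, I would use $\sum_{d\mid n} \widetilde N_d \leq N$ and $\sigma_0(n) = \log^{o(1)}N$. H\"older's inequality then yields
\[
\sum_{d\mid n} \widetilde N_d^{1/5} \;\leq\; \sigma_0(n)^{4/5}\,N^{1/5} \;=\; N^{1/5}\log^{o(1)}N,
\]
and recursing on any non-prime split preserves this bound, since each recursion step operates on factors whose total size is still bounded by $N$. Multiplying by the per-factor polylog overhead gives the claimed $O(N^{1/5}\log^{13/5}N)$ total.

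The main obstacle will be verifying that the substitution $p = 1+kd$ preserves the spectral structure of the rank-3 lattice underlying Theorem~\ref{thm:main}, namely that the second LLL vector remains short, linearly independent from the first, and leads only to nontrivial collisions in the Baby-step/Giant-step matching step. A secondary technical point is confirming that the $d$-dependent rescaling of the lattice determinant aggregates neatly across divisors of $n$, so that the final polylog overhead does not exceed $\log^{13/5}N$. Once those two checks are in place, the per-factor analysis is essentially identical to Theorem~\ref{thm:main}, and the stated bound follows.
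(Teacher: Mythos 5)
Your high-level strategy is the same one the paper uses: exploit the fact that primitive prime divisors of $a^n\pm b^n$ lie in arithmetic progressions $1+d\mathbb{Z}$, feed that congruence constraint into a rank-3 lattice/BSGS routine, and aggregate over divisors of $n$. The paper reaches the same structure via Hittmeir's iterative GCD decomposition ($G_j=\gcd((ab^{-1})^{d_j}-1,N_j)$, Algorithm 5.7) rather than explicit cyclotomic factorization, and via the modular variant Algorithm 5.1/Theorem 5.3 rather than your substitution $p=1+kd$, but those two choices are essentially interchangeable.

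The gap is in the complexity accounting, and it is not cosmetic. You claim the congruence $p\equiv 1\pmod d$ contracts the baby/giant ranges by $d^{1/2}$, hence a per-factor saving of $d^{1/2}$. In fact, the way the constraint enters the rank-3 construction is that the lattice modulus becomes $mn$ (with $n=d$) while the enumeration still runs only over $\varphi(m)$ residues, since the residue modulo $n$ is already known. Rebalancing the costs in Algorithm 5.1 — giantstep cost $\varphi(m)\log^4 N$ against babystep cost $N^{1/2}\log^2 N/(mn)^{3/2}$ — forces $m\asymp N^{1/5}/(n^{3/5}\log^{4/5}N)$ and gives a saving of $n^{3/5}$, not $n^{1/2}$ (Theorem 5.3). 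Since the dominant divisor has $d=\Theta(\log N)$, this distinction is decisive: a $d^{1/2}$ saving would yield $N^{1/5}\log^{16/5-1/2}N=N^{1/5}\log^{27/10}N$, which is strictly worse than the target $N^{1/5}\log^{13/5}N$. Your asserted per-factor bound $O(\widetilde N_d^{1/5}\log^{13/5}N)$ is therefore not derived by your argument; it is also false for small $d$ (where there is no congruence saving at all and the correct exponent is $16/5$) — this happens to be harmless because $\widetilde N_d$ is then tiny, but that observation is doing real work and needs to be made explicit, as the paper does by bounding $G_j\le N^{1/2}$ (or $N^{2/3}$) for all $j<l$. Finally, the closing remark about a "weaker polylog exponent" in Theorem 1.2 misreads the statements: Theorems 1.1 and 1.2 carry the same polylog exponent $13/5$, and the only difference is the $(\log\log N)^{-3/5}$ factor, which the paper notes can in fact also be recovered here by choosing the auxiliary modulus as a product of small primes coprime to $n$.
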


We also propose some improved toolkits. We design a proper module choosing algorithm for the loglog speed-up. Furthermore, we present several improvements to existing toolkits in factorization algorithms. While previous works by \cite{markus2018babystep,harvey2021exponent,harvey2022log} required finding an element $\alpha$ with order $\mathrm{ord}_N(\alpha) > N^{2/5}$ or $\mathrm{ord}_N(\alpha) > N^{2/(3+2r)}$, we demonstrate how to reduce these requirements to $\mathrm{ord}_N(\alpha) > N^{1/4+o(1)}$ and $\mathrm{ord}_N(\alpha) > N^{1/(4r)+o(1)}$ respectively. This advancement is achieved through our refined generalization of Harvey's deterministic factorization method for finding $r$-power divisors.

Finally, we extend our rank-3 lattice construction to address the $r$-power divisor problem of finding all integers $p$ such that $p^r \mid N$. While recent work by Hales and Hiary~\cite{hales2024generalization} achieved complexities of $O(N^{1/(r+2)} (\log N)^2 \log \log N)$ and $O(N^{1/(3+2r)} (\log N)^{16/5})$ by extending Lehman's method, and Harvey and Hittmeir~\cite{harvey2022deterministic} obtained $O(N^{1/4r} \log^{10+\epsilon} N/r^3)$ using Coppersmith's method, we present two improvements. First, by incorporating faster LLL-type lattice reduction algorithms and small prime sieving, we improve the complexity to $O\left(\frac{N^{1/4r} \log^{7+3\epsilon} N}{(\log\log N-\log 4r)r^{2+\epsilon}}\right)$. Then, focusing on the worst-case scenario where $N = p^r q$ with $p=\Theta(N^{1/2r})$ and $q =\Theta(N^{1/2})$, our rank-3 lattice construction further reduces the complexity to $O\left({r^{1/4}} N^{1/4r} \log^{5/2} N \right)$.
{

\subsection{Technical Overview}
Our work builds upon the Baby-step–Giant-step (BSGS) framework and the small-prime sieving techniques of \cite{harvey2021exponent,harvey2022log}. We retain the overall strategy of seeking collisions and incorporate the standard $\log\log$ optimization by restricting candidates modulo a product $m$ of small primes (after checking $\gcd(m,N)$, which already factors $N$ if nontrivial).

This subsection summarizes the common template behind our algorithms, explains what we call baby steps, giant steps, and the collisions we search for, and highlights how our giant steps differ from \cite{harvey2021exponent,harvey2022log} via a Coppersmith-style, rank-3 lattice and the use of the second LLL vector.

\medskip
\noindent\textbf{Setup.}
Let $N=pq$ with unknown prime factors $p,q$. Fix $\alpha\in(\mathbb{Z}/N\mathbb{Z})^\times$ of sufficiently large order, choose an integer $m$ (coprime to $N$, otherwise $\gcd(m,N)$ factors $N$) and write
\[
p \;=\; m\,p_{\mathrm{msb}} + p_{\mathrm{lsb}},\qquad 0 \le p_{\mathrm{lsb}} < m,
\]
so that $p_{\mathrm{msb}}$ (resp.\ $p_{\mathrm{lsb}}$) is the most-significant (resp.\ least-significant) part of $p$ in base $m$. Let $m^{-1}$ denote the inverse of $m$ modulo $N$. We set a baby-step budget $k$ and a set of giant-step indices $\mathcal{J}\subseteq (\mathbb{Z}/m\mathbb{Z})^\times$. Let $X$ be an a prior upper bound on $|p_{\mathrm{msb}}|$, typically $X\simeq \lceil N^{1/2}/m\rceil$ in the balanced case.

\medskip
\noindent\textbf{Baby steps.}
Precompute and store
\[
B_i \;=\; \alpha^{\,i m^2}\!\!\!\pmod{N},\qquad 0\le i<k,
\]
together with their indices $i$.

\medskip
\noindent\textbf{Giant steps via a rank-3 lattice (second LLL vector).}
For each $j\in\mathcal{J}$ (the candidate for $p_{\mathrm{lsb}}\equiv p\pmod m$), define
\[
f(x)\;=\;x + j\,m^{-1}\pmod{N}.
\]
If $j=p_{\mathrm{lsb}}$, then $x_0=p_{\mathrm{msb}}$ is a small root of $f(x)\equiv 0\pmod p$ since $m\,x_0\equiv p-j\pmod p$. We adapt Coppersmith’s method by constructing a rank-3 lattice $L$ spanned by the coefficient vectors of
\[
N,\quad f(xX),\quad f(xX)^2,
\]
and apply LLL to obtain a reduced basis. Crucially, we take the \emph{second} LLL vector
\[
v_j \;=\; (c_j,\; b_j X,\; a_j X^2)\in L,
\]
rather than the shortest one. The shortest vector is explicitly $(j,\,mX,\,0)$ (see Lemma~4.4 for details), which enforces $a_j=0$ and leads to a trivial collision. By contrast, the second vector satisfies $a_j\neq 0$, which is essential for the informative quadratic relation below.

Associate to $v_j$ the polynomial $g_j(x)=c_j+b_j x+a_j x^2$. Since $v_j$ is a $\mathbb{Z}$-combination of $N,f,f^2$, we have $p\mid g_j(x_0)$. Writing
\[
g_j(x_0)= i\,p\quad\text{for some integer }i\ge 0,
\]
and using LLL bounds on $\|v_j\|$, one shows that $i$ is small enough so that $i<k$. The corresponding giant step is defined by
\[
y_j \;:=\; \alpha^{\,E_j}\!\!\!\pmod{N},\qquad
E_j \;=\; c_j m^2 + b_j m(1-j) + a_j(1-j)^2,
\]
which is arranged to match the baby-step exponent modulo $p-1$: since $m\,x_0 = p-j\equiv 1-j\pmod{p-1}$, Fermat’s little theorem gives
\begin{align*}
\alpha^{\,i m^2}
&\equiv \alpha^{\,m^2\,g_j(x_0)} \equiv \alpha^{\,m^2(c_j + b_j x_0 + a_j x_0^{2})}
\\
&\equiv \alpha^{\,c_j m^2 + b_j m (1-j) + a_j (1-j)^2}
\;=\; y_j \pmod{p}.
\end{align*}

\medskip
\noindent\textbf{Collision search.}
We seek a collision between the lists $\{B_i\}$ and $\{y_j\}$:
\[
\alpha^{\,i m^2} \;\equiv\; y_j \pmod{N}\quad\text{or at least}\quad \alpha^{\,i m^2} \;\equiv\; y_j \pmod{p},
\]
for some $0\le i<k$ and $j\in\mathcal{J}$. We use the fast product-tree and Bluestein-type multipoint techniques from \cite{harvey2021exponent,harvey2022log} to find all matches in near-linear time in the list sizes. If a collision holds modulo $N$, the identity $g_j(x_0)=ip$ with $a_j\neq 0$ yields the explicit quadratic equation
\[
c_j \;+\; b_j\,\frac{p-j}{m} \;+\; a_j\,\frac{(p-j)^2}{m^2} \;=\; i\,p,
\]
from which we recover $p$. If the collision holds only modulo $p$, batched gcd of evaluations recovers $p$ deterministically.

\medskip
\noindent\textbf{Why the second LLL vector?}
In our 3-dimensional lattice, the shortest vector is $(j,\,mX,\,0)$, which forces a purely linear relation ($a_j=0$) and leads to a trivial collision that carries no information about $p$. Taking instead the second LLL vector guarantees $a_j\neq 0$ and hence a genuinely informative quadratic in the unknown $p_{\mathrm{msb}}$, which is what ultimately drives factor recovery.

\medskip
\noindent\textbf{Parameter choice and complexity.}
Let $\mathcal{J}=(\mathbb{Z}/m\mathbb{Z})^\times$ after sieving small primes as in \cite{harvey2022log}. The dominant cost comes from sorting and matching the baby-step and giant-step lists, of sizes $k$ and $|\mathcal{J}|=\varphi(m)$, respectively, with negligible overhead from $3$-dimensional LLL. Balancing $k\simeq N^{1/2}/m^{3/2}$ and optimizing $m\simeq N^{1/5}$ yields the bound $O(N^{1/5+o(1)})$. For more detailed analysis, see Section~4.
}

\section{Preliminaries}
\label{sec:preliminaries}
\subsection{Notations}

Throughout this paper, we use $\log N$ to denote the binary logarithm $\log_2 N$. For asymptotic complexity analysis, we employ the standard Bachmann-Landau notation: For functions $f, g: \mathbb{N} \to \mathbb{R}^+$, we write $f = O(g)$ if there exist positive constants $c$ and $n_0$ such that $f(n) \leq cg(n)$ for all $n \geq n_0$; we write $f = o(g)$ if $\lim_{n \to \infty} f(n)/g(n) = 0$; and we write $f = \Theta(g)$ if both $f = O(g)$ and $g = O(f)$ hold, or equivalently, if there exist positive constants $c_1$, $c_2$, and $n_0$ such that $c_1g(n) \leq f(n) \leq c_2g(n)$ for all $n \geq n_0$.

\subsection{Arithmetics}

\subsubsection{Integer Arithmetic}
We recall some result about integer and modular arithmetic.

Let $n \in \mathbb{N}^+$, and suppose we are given integers $x, y \in \mathbb{Z}$ satisfying $|x|, |y| \leq 2^n$. The fundamental arithmetic operations exhibit the following computational complexities: The computations of $x + y$ and $x - y$ can be performed in $O(n)$ time. Let $\mathsf{M}(n)$ denote the time complexity of computing the product $xy$; as established in \cite{harvey2021integer}, we have $\mathsf{M}(n) = O(n \log n)$. For $y > 0$, both floor division $\lfloor x/y \rfloor$ and ceiling division $\lceil x/y \rceil$ can be computed in $O(\mathsf{M}(n))$ time, and consequently, the computation of $x \bmod y \in [0, y)$ also requires $O(\mathsf{M}(n))$ time. More generally, for any fixed rational number $u/v \in \mathbb{Q}^+$ and positive integers $x, y > 0$, both $\lfloor (x/y)^{u/v} \rfloor$ and $\lceil (x/y)^{u/v} \rceil$ can be computed in $O(\mathsf{M}(n))$ time. Using the half-GCD algorithm, for any $x, y \in \mathbb{Z}$, we can compute both $g = \gcd(x, y)$ and the Bézout coefficients $u, v \in \mathbb{Z}$ satisfying $ux + vy = g$ in time $O(\mathsf{M}(n) \log n) = O(n \log^2 n)$. In particular, when $\gcd(x, y) = 1$, the modular multiplicative inverse of $x$ modulo $y$ can be computed in $O(\mathsf{M}(n) \log n)$ time.

For any integer $N \geq 2$, we consider the ring of integers modulo $N$, denoted by $\mathbb{Z}_N = \mathbb{Z}/N\mathbb{Z}$. Elements of $\mathbb{Z}_N$ are canonically represented by their residues in $[0, N)$, requiring at most $\lceil \log_2 N \rceil$ bits for storage. Let $\mathbb{Z}_N^*$ denote the multiplicative group of units in $\mathbb{Z}_N$, that is, $\mathbb{Z}_N^* = \{x \in \mathbb{Z}_N : \gcd(x, N) = 1\}$. For the ring $\mathbb{Z}_N$, we have the following computational results: Given $x, y \in \mathbb{Z}_N$, modular addition and subtraction $x \pm y \bmod N$ can be computed in $O(\log N)$ time, while modular multiplication $xy \bmod N$ requires $O(\mathsf{M}(\log N)) = O(\log N \log \log N)$ time. For $x \in \mathbb{Z}_N$ and $m \in \mathbb{N}_0$, the computation of $x^m \bmod N$ can be achieved in $O(\mathsf{M}(\log N) \log m)$ time using the repeated squaring algorithm. Furthermore, testing whether $x \in \mathbb{Z}_N^*$ can be performed in $O(\mathsf{M}(\log N) \log \log N)$ time by computing $\gcd(x, N)$.

\subsubsection{Polynomial Arithmetic}

The next few results are taken from the previous papers
\cite{markus2018babystep},\cite{harvey2021exponent} and \cite{harvey2022log}.

\begin{lemma}[Polynomial Construction]\label{lem:tree}
Let $n \geq 1$ with $n = O(N)$. Given as input $v_1, \ldots, v_n \in \mathbb{Z}_N$, we may compute the polynomial $f(x) = (x - v_1) \cdots (x - v_n) \in \mathbb{Z}_N[x]$ in time
$O(n \, {\log}^3 N).$
\end{lemma}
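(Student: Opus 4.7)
\medskip

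\noindent\textbf{Proof plan.} The plan is to build the product $f(x)$ by the standard subproduct (binary) tree: place the $n$ linear factors $(x-v_i)$ at the leaves, and at each internal node multiply the two child polynomials in $\mathbb{Z}_N[x]$. After $\lceil \log_2 n\rceil$ levels the root is $f(x)$. The whole analysis therefore reduces to bounding the cost of one polynomial multiplication in $\mathbb{Z}_N[x]$ and summing geometrically over the levels.

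First I would quantify the cost of a single multiplication. At level $\ell$ (counting leaves as level $0$) every node stores a monic polynomial of degree $d=2^{\ell}$ whose coefficients lie in $\mathbb{Z}_N$, so the multiplication at level $\ell+1$ takes two degree-$d$ polynomials in $\mathbb{Z}_N[x]$ to a degree-$2d$ product. Using Kronecker substitution, each such polynomial is encoded as an integer with $O(d\log N)$ bits (padding the coefficients to avoid carries), the two integers are multiplied once via $\MM$, and the coefficients of the product are then reduced modulo $N$. Because $n=O(N)$ we have $d\le n=O(N)$, hence $\log(d\log N)=O(\log N)$, so by $\MM(n)=O(n\log n)$ from \cite{harvey2021integer} a single level-$(\ell+1)$ multiplication costs
\[
O\bigl(\MM(d\log N)\bigr)=O(d\log N\cdot \log N)=O(d\log^{2} N).
\]

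Next I would sum over the tree. Level $\ell$ contains at most $\lceil n/2^{\ell}\rceil$ nodes of degree $2^{\ell}$, so the multiplications producing level $\ell+1$ contribute a total of
\[
O\!\left(\frac{n}{2^{\ell}}\cdot 2^{\ell}\log^{2} N\right)=O(n\log^{2} N).
\]
Summing over the $O(\log n)=O(\log N)$ levels (using $n=O(N)$ again) yields the claimed bound $O(n\log^{3} N)$. The modular reductions of the $O(d)$ coefficients after each product cost $O(d\cdot \MM(\log N))=O(d\log N\log\log N)$, which is absorbed into the multiplication cost and does not affect the bound.

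The main technical point to watch is keeping the polynomial multiplication cost sharp: a naive bound that treated coefficient arithmetic separately from polynomial arithmetic would lose a factor of $\log\log N$ or more per level and would not yield $\log^{3} N$. Routing everything through Kronecker substitution and the integer multiplication bound $\MM(n)=O(n\log n)$ avoids this loss, and after that the level-by-level geometric sum is routine.
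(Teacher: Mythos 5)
Your proof is correct and follows the standard product-tree argument that the cited papers (\cite{harvey2021exponent,harvey2022log}) rely on; the paper itself does not reproduce a proof for this lemma, only citing those works. Your accounting — Kronecker substitution packs a degree-$d$ polynomial over $\mathbb{Z}_N$ into an $O(d\log N)$-bit integer (using that the product's coefficients are bounded by $(d+1)N^2$ so $O(\log N)$ bits of padding suffice when $d=O(N)$), one call to $\MM$ gives $O(d\log^2 N)$ per node multiplication, the $O(n\log^2 N)$ per-level cost is constant across the $O(\log n)=O(\log N)$ levels, and the post-hoc coefficient reductions modulo $N$ are dominated — is exactly the right decomposition and yields $O(n\log^3 N)$ as claimed. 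The only minor slip is an inconsequential factor of $2$ in the per-level node count ($n/2^{\ell+1}$ multiplications rather than $n/2^{\ell}$), which does not affect the asymptotics.
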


\begin{lemma}[Polynomial Evaluation]\label{lem:bluestein}
Given as input an element $\alpha \in \mathbb{Z}_N^*$, positive integers $m, n = O(N)$, and $f \in \mathbb{Z}_N[x]$ of degree $n$, we may compute $f(1), f(\alpha), \ldots, f(\alpha^{m-1}) \in \mathbb{Z}_N$ in time
$$O((n + m) \,{\log}^2 N).$$
\end{lemma}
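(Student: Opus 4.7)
The plan is to reduce the multipoint evaluation at the geometric progression $1,\alpha,\ldots,\alpha^{m-1}$ to a single polynomial multiplication in $\mathbb{Z}_N[x]$ via Bluestein's (chirp) identity
$$ik \;=\; \binom{i+k}{2}-\binom{i}{2}-\binom{k}{2},$$
which, writing $f(x)=\sum_{i=0}^n f_i x^i$, rewrites
$$f(\alpha^k)\;=\;\alpha^{-\binom{k}{2}}\sum_{i=0}^{n}\bigl(f_i\,\alpha^{-\binom{i}{2}}\bigr)\,\alpha^{\binom{i+k}{2}}.$$
The inner sum is, up to reindexing, a correlation of the sequence $a_i:=f_i\alpha^{-\binom{i}{2}}$ with the chirp sequence $b_t:=\alpha^{\binom{t}{2}}$, which I can realize as a length-$O(n+m)$ polynomial product.

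First I would precompute the chirp table $\alpha^{\binom{t}{2}}$ for $0\leq t\leq n+m-1$ via the recurrence $\binom{t+1}{2}=\binom{t}{2}+t$, maintaining $\alpha^t$ and $\alpha^{\binom{t}{2}}$ in parallel and updating each by two modular multiplications per step. Inverting $\alpha$ once (in $O(\MM(\log N)\log\log N)$ bit operations via extended gcd) and running the same recurrence with $\alpha^{-1}$ produces $\alpha^{-\binom{i}{2}}$ for $0\leq i\leq n$. This phase costs $O((n+m)\MM(\log N))=O((n+m)\log N\log\log N)$ bit operations and is well within the target. A further $n+1$ modular multiplications form the sequence $a_i$.

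Next I would compute all convolution values $c_k=\sum_{i=0}^{n}a_i\,b_{i+k}$ for $0\leq k\leq m-1$ as the appropriate coefficients of a product of two polynomials in $\mathbb{Z}_N[x]$ of degree $O(n+m)$. Using Kronecker substitution with slot width $\Theta(\log N)$ (wide enough to absorb the growth factor of at most $n+m$ during integer multiplication, using $n,m=O(N)$), this reduces to a single multiplication of integers of bit-length $O((n+m)\log N)$, which by the Harvey--van der Hoeven bound $\MM(L)=O(L\log L)$ costs
$$O\!\bigl((n+m)\log N\cdot\log((n+m)\log N)\bigr)\;=\;O\!\bigl((n+m)\log^2 N\bigr)$$
bit operations. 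Finally, scaling each $c_k$ by $\alpha^{-\binom{k}{2}}$ yields $f(\alpha^k)$ in an additional $O(m\MM(\log N))$ bit operations. Summing the three phases gives the claimed $O((n+m)\log^2 N)$ bound.

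The main technical point to get right is the convolution step: one must size the Kronecker slots large enough to absorb intermediate coefficient growth without overflow yet small enough to keep the packed integer length at $O((n+m)\log N)$. Since each raw convolution entry before reduction modulo $N$ is bounded by $(n+m)N^2$, slots of width $\lceil \log_2 N\rceil+\lceil\log_2(n+m)\rceil+O(1)=O(\log N)$ suffice under the hypothesis $n,m=O(N)$. Everything else amounts to bookkeeping at cost $O((n+m)\MM(\log N))$, which is strictly dominated by the single large integer multiplication above, so the overall bit-operation count matches the statement.
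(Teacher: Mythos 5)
Your proof is correct and follows essentially the same route as the cited sources: Bluestein's chirp trick (via the integer-valued identity $ik=\binom{i+k}{2}-\binom{i}{2}-\binom{k}{2}$, which sidesteps any division by $2$ or need for a square root of $\alpha$), reduction of the correlation to a single degree-$O(n+m)$ polynomial product, and Kronecker substitution into one $O((n+m)\log N)$-bit integer multiplication at cost $O((n+m)\log^2 N)$. The only cosmetic divergence from the paper's phrasing is that you realize the correlation by reversing the coefficient sequence into an ordinary polynomial product, whereas the paper speaks of a ``Laurent polynomial multiplication'' (keeping the shifted sequence at negative exponents); the two setups are equivalent and give the same bound.
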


\begin{proof}
This is exactly in \cite{harvey2021exponent,harvey2022log}. The proof leverages Bluestein’s trick to compute polynomial evaluations efficiently by transforming the problem into a Laurent polynomial multiplication.
\end{proof}

\subsection{Lattice}
We begin by recalling fundamental concepts of lattices and basis reduction. Although our work exclusively employs integer lattices, all definitions extend naturally to real-valued lattices.

Let $v_1, \ldots, v_n \in \mathbb{Z}^m$ with $m \geq n$ be linearly independent vectors. The lattice $\mathcal{L}$ generated by $\{v_1, \ldots, v_n\}$ is defined as
$$
\mathcal{L} = \left\{ \sum_{i=1}^n a_i v_i \;\big|\; a_i \in \mathbb{Z} \right\} \subseteq \mathbb{Z}^m.
$$
When $m = n$, the lattice is said to be of full rank. A basis $\mathbf{B} = \{v_1, \ldots, v_n\}$ spans $\mathcal{L}$, and we call $n = \dim(\mathcal{L})$ the lattice dimension.

Given a basis $\mathbf{B}$, let $v_1^*, \ldots, v_n^*$ denote its Gram-Schmidt orthogonalization. The lattice determinant is
$$
\det(\mathcal{L}) := \prod_{i=1}^n \|v_i^*\|,
$$
where $\| \cdot \|$ denotes Euclidean norm. Any lattice $\mathcal{L}$ admits infinitely many bases, yet all share the same determinant. For full-rank lattices, this equals the absolute value of the determinant of the basis matrix $[v_1 \cdots v_n]^\top$.

Let $\mathcal{B}_m(0, r) := \{ \mathbf{x} \in \mathbb{R}^m : \|\mathbf{x}\| < r \}$ denote the $m$-dimensional open ball of radius $r$ centered at the origin (subscript omitted when clear from context). The successive minima $\lambda_1, \ldots, \lambda_n$ of a rank $n$ lattice satisfy that $\lambda_i(\mathcal{L})$ is the smallest radius of a ball centered at the origin containing $i$ linearly independent lattice vectors.

Minkowski's Second theorem establishes a fundamental connection between a lattice's determinant and its successive minima.

\begin{theorem}[Minkowski's Second Theorem]\label{thm:minkowski}
For any rank $n$ lattice $\mathcal{L}$ with basis $\mathbf{B}$, the successive minima satisfy
$$
\prod_{i=1}^n \lambda_i(\mathcal{L}) < (\sqrt{n})^n \det(\mathbf{B}).
$$
\end{theorem}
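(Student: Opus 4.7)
My plan is to split the proof into two parts: (i) establish the Euclidean-ball case of the classical Minkowski's Second Theorem, namely $V_n\prod_{i=1}^n \lambda_i \leq 2^n\det(\mathcal{L})$, where $V_n$ denotes the volume of the unit ball in $\mathbb{R}^n$; and (ii) bound $2^n/V_n$ above by $(\sqrt{n})^n$ through a Stirling-type volume estimate. Multiplying the first inequality by $1/V_n$ and applying the second then yields the claim.

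For part (i), I would invoke the classical Minkowski's Second Theorem for symmetric convex bodies, specialized to the Euclidean unit ball $B$. The standard proof selects linearly independent lattice vectors $v_1, \ldots, v_n$ with $\|v_i\|=\lambda_i$, then uses a measure-theoretic deformation argument (as presented, e.g., in Cassels's \emph{Geometry of Numbers} or Siegel's lectures) to show that sufficiently many disjoint translates of $(\lambda_i/2)\,B$ fit into a suitably chosen fundamental region, producing $V_n\prod\lambda_i \leq 2^n\det(\mathcal{L})$. This argument is classical but delicate; given the supporting role of this lemma in the paper, the cleanest route is to cite it as a standard fact rather than reprove it from scratch.

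For part (ii), using $V_n = \pi^{n/2}/\Gamma(n/2+1)$ together with the Stirling bound $\Gamma(n/2+1) \leq C\sqrt{n}\,(n/(2e))^{n/2}$ (for a suitable absolute constant $C$), one computes
\[ \frac{2^n}{V_n} \leq C\sqrt{n}\cdot n^{n/2}\cdot\left(\frac{2}{\pi e}\right)^{n/2}. \]
Since $2/(\pi e)<1$, the right-hand side is strictly less than $n^{n/2} = (\sqrt{n})^n$ for all $n$ sufficiently large; the remaining finitely many small cases ($n=2,3,\dots$) can be verified by direct numerical computation. Combining (i) and (ii) then gives $\prod_{i=1}^n\lambda_i < (\sqrt{n})^n\det(\mathcal{L})$, as required.

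The main obstacle is the classical Minkowski's Second Theorem invoked in part (i): its proof is one of the more subtle results in the geometry of numbers, involving a nontrivial measure-theoretic/deformation argument to control how translates of the shrinking convex body pack against the lattice. For the present paper, however, the bound plays only an auxiliary role in LLL-type sizing estimates, so treating Minkowski's Second as a black box and concentrating the exposition on the elementary volume computation in part (ii) is both standard and appropriate.
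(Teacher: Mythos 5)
The paper states this result in its Preliminaries section as a citation of a classical theorem and does not supply a proof, so there is no "paper's route" to compare yours against. Your plan---invoking the classical Minkowski Second Theorem $V_n\prod_{i=1}^n\lambda_i\le 2^n\det(\mathcal L)$ for the Euclidean ball, and then bounding $2^n/V_n$ by $(\sqrt n)^n$ via $V_n=\pi^{n/2}/\Gamma(n/2+1)$ and Stirling---is exactly the standard way to arrive at this particular constant, and is correct in substance for $n\ge 2$.

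One genuine gap worth flagging, though, precisely at the point where you say "the remaining finitely many small cases can be verified by direct numerical computation": a direct check at $n=1$ shows that the \emph{strict} inequality fails. For a rank-$1$ lattice $\lambda_1=\det(\mathbf B)$ and $(\sqrt 1)^1\det(\mathbf B)=\det(\mathbf B)$, so the two sides are equal; correspondingly $2^1/V_1 = 2/2 = 1 = 1^{1/2}$, so there is no slack in the volume estimate either. The strict inequality $2^n/V_n<n^{n/2}$ genuinely holds only for $n\ge 2$ (e.g. $4/\pi<2$ at $n=2$, $6/\pi<3\sqrt3$ at $n=3$), and your Stirling-based upper bound on $\Gamma(n/2+1)$ will not close the $n=1$ case because the approximation is not tight enough there. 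This is harmless for the paper---the bound is only ever invoked for rank-$3$ lattices in Lemma~\ref{lem:second vector}---but your proof plan should explicitly restrict to $n\ge 2$ (or weaken the conclusion to $\le$ to cover $n=1$). Beyond that, the decomposition is sound, and citing the classical theorem as a black box rather than reproving the measure-theoretic core is entirely appropriate given the auxiliary role the lemma plays here.
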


This bound is non-constructive. For the special case of dimension 2, the classical Gauss reduction algorithm efficiently computes a shortest lattice vector. In arbitrary dimensions, we employ the seminal lattice reduction technique by Lenstra, Lenstra, and Lovász \cite{Lenstra1982factoring}:

\begin{lemma}[LLL Reduction]\label{lem:lll}
Let $\mathcal{L}(\mathbf{B})\subseteq\mathbb{Z}^m$ be a lattice with basis $\{b_1, \ldots, b_n\}$. The LLL algorithm outputs a reduced basis $\{v_1, \ldots, v_n\}$ satisfying
$$
\|v_i\| \leq 2^{(n-1)/2} \lambda_i(\mathcal{L}) \quad (1 \leq i \leq n),
$$
with time complexity $O(n^{4}m\beta^{2}(\log n+\log \beta))$, where $\beta = \Theta(\log \|\mathbf{B}\|_{\max}+\log m)$ encodes the basis vector bit-length. 
\end{lemma}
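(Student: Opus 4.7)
The plan is to follow the classical analysis of LLL due to Lenstra, Lenstra, and Lovász, supplemented with the fast-arithmetic refinements needed for the stated bit-complexity. First I would fix the reduction parameter $\delta = 3/4$ and call a basis $\{v_1,\ldots,v_n\}$ LLL-reduced if its Gram-Schmidt orthogonalization $\{v_1^*,\ldots,v_n^*\}$, with coefficients $\mu_{i,j} = \langle v_i, v_j^*\rangle/\|v_j^*\|^2$, satisfies the size condition $|\mu_{i,j}|\leq 1/2$ for $j<i$ and the Lovász condition $\|v_{i+1}^* + \mu_{i+1,i}v_i^*\|^2\geq \delta\|v_i^*\|^2$. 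The LLL algorithm alternates size reductions (subtracting appropriate integer multiples of previous vectors from each $v_i$) with swaps of consecutive vectors whose Gram-Schmidt norms violate the Lovász condition, and terminates when both conditions hold at every index.

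For the approximation bound, the Lovász condition combined with the size condition yields the standard Gram-Schmidt decay estimate $\|v_j^*\|^2 \leq 2^{i-j}\|v_i^*\|^2$ for $j\leq i$, which implies $\|v_i\|^2 \leq 2^{i-1}\|v_i^*\|^2$. To compare against $\lambda_i(\mathcal{L})$, I would use the classical observation that any $i$ linearly independent lattice vectors $w_1,\ldots,w_i$ with $\|w_\ell\|\leq \lambda_i(\mathcal{L})$ cannot all lie in $\mathrm{span}(v_1^*,\ldots,v_{i-1}^*)$, so some $w_\ell$ has a nonzero component on $v_k^*$ with $k\geq i$, forcing $\lambda_i(\mathcal{L})\geq \|w_\ell\|\geq \|v_k^*\|\geq 2^{-(k-i)/2}\|v_i^*\|\geq 2^{-(n-1)/2}\|v_i\|$, which is the desired inequality.

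For the running time I would employ the standard potential $D = \prod_{i=1}^{n}\|v_i^*\|^{2(n-i+1)}$, which equals $\prod_{i=1}^{n}\det(v_1,\ldots,v_i)^2$ and is therefore a positive integer throughout the execution. Size reductions leave $D$ invariant, while every swap multiplies it by at most $\delta = 3/4$; since $\log D$ starts at $O(n^2\beta)$ and is bounded below by $0$, the total number of swaps is $O(n^2\beta)$. Each iteration performs $O(n^2 m)$ arithmetic operations and, using fast multiplication $\MM(s) = O(s\log s)$ together with the Nguyen--Stehlé $L^2$-style floating-point book-keeping, each such operation costs $O(\MM(\beta)) = O(\beta(\log n + \log\beta))$ bit operations, yielding the stated total $O(n^4 m\beta^2(\log n+\log\beta))$.

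The main obstacle is the last sentence: a naive integer implementation lets the Gram-Schmidt coefficients and intermediate entries grow to bit-length $\Omega(n\beta)$, producing a substantially worse polynomial factor in $n$. Avoiding this requires the floating-point variant with carefully tuned precision and periodic reorthogonalization, whose correctness and numerical stability analysis I would import wholesale from the lattice-reduction literature rather than redevelop, since the statement is essentially a recorded black-box bound used in the rest of the paper.
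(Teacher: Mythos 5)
Your argument for the approximation inequalities $\|v_i\|\leq 2^{(n-1)/2}\lambda_i(\mathcal{L})$ is the standard one and is correct. For the running-time bound, however, your route diverges from the paper's, and it contains two errors that happen to cancel. The paper does not re-derive the iteration count or potential argument at all: it simply cites Corollary 17.5.4 of Galbraith's book, which states that \emph{integer} LLL performs $O(n^{3}m\beta)$ arithmetic operations on integers of bit-length $O(n\beta)$, and then replaces each such operation by a fast multiplication of cost $\MM(n\beta)=O\bigl(n\beta(\log n+\log\beta)\bigr)$, yielding $O\bigl(n^{4}m\beta^{2}(\log n+\log\beta)\bigr)$ directly.

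Your accounting instead multiplies $O(n^{2}\beta)$ iterations by $O(n^{2}m)$ arithmetic operations per iteration by an $O(\MM(\beta))$ cost per operation. Both middle factors are off by a factor of $n$ in opposite directions. The standard per-iteration work with incremental Gram--Schmidt updates is $O(nm)$, not $O(n^{2}m)$ (you would only get $n^{2}m$ by naively recomputing the full orthogonalization each time). And in the integer variant of LLL, which is the one the iteration bound and exactness of the potential $D$ actually refer to, the intermediate quantities (the $d_i$'s, the rational $\mu_{ij}$'s) have bit-length $\Theta(n\beta)$, not $\Theta(\beta)$, so the per-operation cost is $\MM(n\beta)$, not $\MM(\beta)$. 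The appeal to ``Nguyen--Stehl\'e $L^2$-style floating-point book-keeping'' does not rescue the $O(\beta)$-precision claim either: $L^2$ works at precision $\Theta(n)$ (dimension-dependent), its potential is not the exact integer $D$ you use, and its bit-complexity is obtained by a considerably more delicate argument than multiplying counts. As you yourself flag in the last paragraph, the numerical-stability issues are nontrivial; the paper avoids them entirely by staying with the classical integer algorithm and a textbook reference. I would recommend replacing the third paragraph with the paper's two-line derivation: Galbraith's operation count on $O(n\beta)$-bit integers, composed with $\MM(n\beta)=O(n\beta(\log n+\log\beta))$.
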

\begin{remark}
By Corollary 17.5.4 in Mathematics of Public Key Cryptography, Version 2.0 by Steven D. Galbraith (October 31, 2018)~\cite{galbraith2012mathematics}, we know that the LLL algorithm requires $O(n^3 m \beta)$ arithmetic operations on integers of size $O(n \beta)$. Here, by employing integer multiplication with complexity $O(n \log n)$ for two $n$-bit integers as described in~\cite{harvey2021integer}, we obtain our LLL complexity result.
\end{remark}
\begin{remark}
\label{rem:fasterlll}
If we only require the first relative shortest vector, we can apply the lattice reduction algorithm proposed by Neumaier and Stehlé~\cite{NS2016fasterlll}, which improves the time complexity to $O(n^{4+\epsilon} \beta^{1+\epsilon})$.
\end{remark}

\subsection{Coppersmith's Method} In Coppersmith's method, the first step is to construct a lattice, where each row corresponds to the coefficient vector of a polynomial sharing the same root modulo $M$. To find small roots of a modular univariate polynomial $f$, we require a polynomial that shares the same root with $f$ not only modulo $M$ but also over $\mathbb{Z}$. To achieve this, we use Howgrave-Graham's following result~\cite{howgrave1997finding,howgrave2001approximate}, which establishes that small modular roots of a polynomial $h$ with small coefficients are indeed integer roots of $h$.

\begin{lemma}[Howgrave-Graham~\cite{howgrave1997finding}]
\label{lem:HG}
Let $h(x_1, \ldots, x_n) \in \mathbb{Z}[x_1, \ldots, x_n]$ be the sum of at most $w$ monomials, and let $X_1, \ldots, X_n > 0$. For any $(y_1, \ldots, y_n) \in \mathbb{Z}^n$ satisfying the following two conditions:
\begin{enumerate}
    \item $h(y_1, \ldots, y_n) \equiv 0 \pmod{M}$ where $|y_i| < X_i$ for $1 \leq i \leq n$,
    \item $\Vert h(x_1 X_1, \ldots, x_n X_n) \Vert < \frac{1}{\sqrt{w}} M$,
\end{enumerate}
then $h(y_1, \ldots, y_n) = 0$.
\end{lemma}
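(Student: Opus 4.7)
The plan is to force the integer $h(y_1,\ldots,y_n)$ to be simultaneously a multiple of $M$ (by hypothesis (1)) and strictly smaller than $M$ in absolute value, which leaves no option other than $h(y_1,\ldots,y_n)=0$. The proof is short and admits no real obstacle; the only subtlety is choosing the right inequality so as to exploit the $1/\sqrt{w}$ slack in hypothesis (2).

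First I would expand $h(x_1,\ldots,x_n) = \sum_I a_I\, x_1^{i_1}\cdots x_n^{i_n}$, where $I=(i_1,\ldots,i_n)$ ranges over a set of at most $w$ multi-indices, and note that the coefficient vector of $h(x_1 X_1,\ldots,x_n X_n)$ is $(a_I\, X_1^{i_1}\cdots X_n^{i_n})_I$, whose Euclidean norm is precisely the quantity bounded in hypothesis (2). Applying the triangle inequality and the bounds $|y_i|<X_i$ from hypothesis (1) then gives
\[
|h(y_1,\ldots,y_n)| \;\le\; \sum_I |a_I|\,|y_1|^{i_1}\cdots |y_n|^{i_n} \;<\; \sum_I |a_I|\, X_1^{i_1}\cdots X_n^{i_n},
\]
and the right-hand side is exactly the $\ell^1$ norm of the coefficient vector of $h(x_1 X_1,\ldots,x_n X_n)$.

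The final step is to pass from $\ell^1$ to $\ell^2$. Since this coefficient vector has at most $w$ nonzero entries, Cauchy--Schwarz gives $\|v\|_1 \le \sqrt{w}\,\|v\|_2$, so combining with hypothesis (2) yields $|h(y_1,\ldots,y_n)| < \sqrt{w}\cdot (M/\sqrt{w}) = M$. Together with hypothesis (1), which asserts $M \mid h(y_1,\ldots,y_n)$, this forces $h(y_1,\ldots,y_n)=0$, as desired. The factor $\sqrt{w}$ in the hypothesis is precisely what the Cauchy--Schwarz step consumes, and it is the only nontrivial ingredient of the argument.
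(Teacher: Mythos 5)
Your proof is correct and is the standard argument for Howgrave-Graham's lemma; the paper itself cites the result without supplying a proof, so there is nothing to compare against beyond the literature. Your use of the triangle inequality with $|y_i|<X_i$ to bound $|h(y_1,\ldots,y_n)|$ by the $\ell^1$ norm of the scaled coefficient vector, followed by Cauchy--Schwarz to convert $\ell^1$ to $\ell^2$ at a cost of $\sqrt{w}$, is exactly the intended argument, and the concluding step (an integer multiple of $M$ of absolute value less than $M$ must be zero) is sound.
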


In 1996, Coppersmith proposed a result called \textit{factoring with a hint}~\cite{coppersmith1996finding}, which factors RSA moduli $N = pq$ in polynomial time given only half of the bits of $p$. {This result was further generalized to more general cases.} 

\begin{lemma}[\cite{may2003new}, Theorem 7]
\label{lem:fwh}
Let $N$ be an integer of unknown factorization with a divisor $b \geq N^{\beta}$. Let $f_\delta(x)$ be a univariate, monic polynomial of degree $\delta$. Then all solutions $x_0$ to the equation  
\[
f_\delta(x) \equiv 0 \pmod{b} \quad\text{with}\quad |x_0| \leq cN^{\beta^2 / \delta}
\]  
can be found in time $O\left(\lceil c\rceil\frac{\log^{6+3\epsilon} N}{\delta^{1+\epsilon}}\right)$.
\end{lemma}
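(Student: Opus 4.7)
The plan is to apply the standard Coppersmith lattice construction, convert the resulting short vector into an integer-root polynomial via Howgrave-Graham's Lemma~\ref{lem:HG}, and extract roots by univariate factoring over $\mathbb{Q}$, using the faster LLL variant of Remark~\ref{rem:fasterlll} to meet the stated complexity. As a preliminary reduction, I would absorb the constant $c$ by partitioning the search interval $[-cN^{\beta^2/\delta}, cN^{\beta^2/\delta}]$ into $\lceil c\rceil$ sub-intervals of half-width $X := N^{\beta^2/\delta}$, translating each sub-interval to be centered at the origin via $y = x - x_c$, and running the core procedure on the translated monic polynomial $f_\delta(y + x_c)$, which still has degree $\delta$. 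This absorbs the $\lceil c\rceil$ factor in the running time.

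The core procedure proceeds as follows. Fix an integer $m\geq 1$ and form the usual Coppersmith shift polynomials
\[
g_{i,j}(x) \;=\; N^{\,m-i}\, f_\delta(x)^{\,i}\, x^{\,j}, \qquad 0 \leq i \leq m,\ 0 \leq j < \delta.
\]
Since $b \mid N$ and $b \mid f_\delta(x_0)$, each value $g_{i,j}(x_0)$ is divisible by $b^{m-i}\cdot b^{\,i} = b^m$. Let $L$ be the lattice generated by the coefficient vectors of the polynomials $g_{i,j}(xX)$; this matrix is lower triangular with diagonal entries $N^{m-i}X^{\,i\delta+j}$ and dimension $n=(m+1)\delta$. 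Apply the Neumaier--Stehl\'e LLL to $L$ and let $v$ denote the first basis vector, with $\|v\|\leq 2^{(n-1)/2}(\det L)^{1/n}$. A short calculation with the explicit determinant shows that, for a suitable choice of $m$, the bound $|x_0|\leq X=N^{\beta^2/\delta}$ is precisely the threshold at which $\|v\|<b^m/\sqrt{n}$ is guaranteed under the hypothesis $b\geq N^{\beta}$. Lemma~\ref{lem:HG} then asserts that the polynomial $h(x)$ whose coefficient vector is $v$ satisfies $h(x_0)=0$ over $\mathbb{Z}$, and all candidate roots in $[-X,X]$ are recovered by factoring $h$ over $\mathbb{Q}$.

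For the complexity, one chooses $m$ so that the lattice dimension $n$ grows like $\log N/\delta$, in order to absorb the $O(1/m)$ slack in the Coppersmith exponent and reach the sharp bound $N^{\beta^2/\delta}$; the basis entries then have bit-length $B=O(n\log N)$. Plugging into the Neumaier--Stehl\'e bound $O(n^{4+\epsilon}B^{1+\epsilon})$ of Remark~\ref{rem:fasterlll}, combined with a near-linear univariate factorization step and multiplication by the $\lceil c\rceil$ sub-interval count, yields the claimed $O\bigl(\lceil c\rceil\log^{6+3\epsilon}N/\delta^{1+\epsilon}\bigr)$ bound. The principal technical obstacle is the fine-tuning of $m$ and the lattice shape: pushing the root bound all the way to $cN^{\beta^2/\delta}$ without any $N^{o(1)}$ loss is exactly what forces both the sub-interval splitting and a careful balancing of the LLL exponent against the $\log N$ factors, and it is also the source of the $\delta$-savings in the denominator.
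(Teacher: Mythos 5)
Your overall blueprint is the right one — the standard Coppersmith lattice, Howgrave-Graham's lemma to pass from a modular root to an integer root, the Neumaier–Stehl\'e LLL from Remark~\ref{rem:fasterlll}, and absorbing $c$ by translating the polynomial — and this is indeed the route the paper takes. However, there is a concrete error in your parameter choices that undermines the complexity claim.

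You assert that one should choose $m$ so that ``the lattice dimension $n$ grows like $\log N/\delta$.'' This is not correct, and the error propagates: to push the recoverable root bound all the way up to $N^{\beta^2/\delta}$ (with only a constant-factor loss), the lattice dimension must be $\Theta(\log N)$, \emph{not} $\Theta(\log N/\delta)$. It is the exponent $m$ of $f_\delta$ (the ``height'' of the Coppersmith staircase) that is chosen to grow like $\log N/\delta$; with $(m+1)\delta$ polynomials this does yield dimension $n\sim\log N$. The paper sets $n:=\lceil\log N+1\rceil$ and $m:=\lceil\beta n/\delta\rceil$, and pads the $g_{i,j}$ block with extra shifts $h_i=x^i f^m$ precisely so that the dimension equals $n$. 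If one really took dimension $n\sim\log N/\delta$, the Howgrave-Graham inequality would only guarantee roots up to $N^{\beta^2/\delta-O(1/n)}=N^{\beta^2/\delta}\cdot 2^{-O(\delta)}$, so covering $[-cN^{\beta^2/\delta},cN^{\beta^2/\delta}]$ would require on the order of $2^{O(\delta)}\lceil c\rceil$ translated sub-intervals, not $\lceil c\rceil$ — the $\delta$-savings would evaporate. Moreover your own plug-in is inconsistent: with your stated $n\sim\log N/\delta$ and $B\sim n\log N\sim\log^2 N/\delta$, the Neumaier–Stehl\'e bound $O(n^{4+\epsilon}B^{1+\epsilon})$ gives $O(\log^{6+3\epsilon}N/\delta^{5+2\epsilon})$, which does not match your claimed $O(\log^{6+3\epsilon}N/\delta^{1+\epsilon})$. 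With the correct parametrization $n\sim\log N$, $m\sim\log N/\delta$, $B\sim m\log N\sim\log^2 N/\delta$, one gets exactly $O(\log^{4+\epsilon}N\cdot(\log^2 N/\delta)^{1+\epsilon})=O(\log^{6+3\epsilon}N/\delta^{1+\epsilon})$.

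Two minor further points. First, the paper actually uses $4\lceil c\rceil$ sub-intervals (not $\lceil c\rceil$), because it deliberately shrinks $X$ by a factor of $2\cdot N^{1/\log N}=4$ to make the determinant inequality hold cleanly; asymptotically this does not matter. Second, you hand-wave the determinant computation (``a short calculation shows\ldots''); this is the core of the argument, where the assumed bound $b\geq N^\beta$ enters, and it is worth carrying out explicitly as the paper does (the determinant is $N^{\delta m(m+1)/2}X^{n(n-1)/2}$ for the chosen shift set, and the inequality reduces to $n-1\geq\frac{\beta(\delta-\beta)}{\delta}\log N$, which holds for $n\geq\lceil\log N+1\rceil$).
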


\begin{proof}
We begin by defining
$$
X \coloneqq \frac{1}{2}N^{\frac{\beta^2}{\delta}-\frac{1}{\log N}}, \quad
n \coloneqq \left\lceil\log N + 1\right\rceil, \quad
m \coloneqq \left\lceil\frac{\beta n}{\delta}\right\rceil.
$$
We construct a set $G$ of polynomials where each polynomial has a root $x_0$ modulo $b^m$ whenever $f_b(x)$ has the root $x_0$ modulo $b$. The set comprises:

$$
\begin{array}{lllll}
N^m & x N^m & x^2 N^m & \cdots & x^{\delta-1} N^m \\
N^{m-1}f & xN^{m-1}f & x^2N^{m-1}f & \cdots & x^{\delta-1}N^{m-1}f \\
N^{m-2}f^2 & xN^{m-2}f^2 & x^2N^{m-2}f^2 & \cdots & x^{\delta-1}N^{m-2}f^2 \\
\vdots & \vdots & \vdots & & \vdots \\
Nf^{m-1} & xNf^{m-1} & x^2Nf^{m-1} & \cdots & x^{\delta-1}Nf^{m-1}.
\end{array}
$$
Additionally, we include:
$$
f^m, \, xf^m, \, x^2f^m, \, \dots, \, x^{n-\delta m}f^m.
$$
{
Namely, we have chosen the polynomials
$$
\begin{array}{ll}
g_{i, j}(x)=x^j N^i f^{m-i}(x) & \text { for } i=0, \ldots, m-1, j=0, \ldots, \delta-1 \text { and }, \\
h_i(x)=x^i f^m(x) & \text { for } i=0, \ldots, t-1.
\end{array}
$$
}
Let $L$ be the lattice spanned by the coefficient vectors of $g_{i,j}(xX)$ and $h_i(xX)$. The basis matrix $B$ of $L$ is lower triangular, yielding:
$$
\det(L) = N^{\frac{1}{2}\delta m(m+1)}X^{\frac{1}{2}n(n-1)}.
$$
Applying Lemma \ref{lem:HG}, we require:
$$
2^{\frac{n-1}{4}}\det(L)^{\frac{1}{n}} < \frac{b^m}{\sqrt{n}}.
$$
Using $b \geq N^\beta$, this yields:
$$
N^{\frac{\delta m(m+1)}{2n}}X^{\frac{n-1}{2}} \leq 2^{-\frac{n-1}{4}}n^{-\frac{1}{2}}N^{\beta m}.
$$
For $X$, we obtain:
$$
X \leq 2^{-\frac{1}{2}}n^{-\frac{1}{n-1}}N^{\frac{2\beta m}{n-1}-\frac{\delta m(m+1)}{n(n-1)}}.
$$
For $n \geq 7$, $n^{-\frac{1}{n-1}} = 2^{-\frac{\log n}{n-1}} \geq 2^{-\frac{1}{2}}$, simplifying to:
$$
X \leq \frac{1}{2}N^{\frac{2\beta m}{n-1}-\frac{\delta m(m+1)}{n(n-1)}}.
$$
Given our choice of $X$, it suffices to show:
$$
\frac{m(2\beta n-\delta(m+1))}{n(n-1)} \geq \frac{\beta^2}{\delta}-\frac{1}{\log N}
$$
Substituting $m = \beta n/\delta$ and simplifying yields:
$$
\frac{1}{\log N} \geq \frac{\beta(\delta-\beta)}{\delta(n-1)}.
$$
This is equivalent to:
$$
n-1 \geq \frac{\beta(\delta-\beta)}{\delta}\log N.
$$
Since $0 < \beta \leq 1$, we have $\frac{\beta(\delta-\beta)}{\delta}\log N < \log N$, which holds by our choice of $n$.

Note that our choice of $X$ only covers solutions in $[-X, X]$. To find all solutions in $[-cN^{\frac{\beta^2}{\delta}}, cN^{\frac{\beta^2}{\delta}}]$, we solve the problem for at most $4\lceil c\rceil$ translated polynomials to cover the full range. At last we apply a root-finding algorithm on each polynomial. For a single polynomial, the total time complexity is dominated by the LLL algorithm. Note that we only require the first relative shortest vector, so we could apply the faster lattice reduction algorithm in Remark \ref{rem:fasterlll}. With lattice dimension $n = O(\log N)$ and basis vector bit-length $\log(N^m) = m\log N$, the complexity is:
$$
O(\lceil c\rceil\log^{4+\epsilon} N(m\log N)^{1+\epsilon}) = O\left(\lceil c\rceil\frac{\log^{6+3\epsilon} N}{\delta^{1+\epsilon}}\right).
$$
\end{proof}

\subsection{Prime Distribution}
Following the idea of \cite{harvey2022log}, we also need the prime distribution lemma for the loglog speedup.

\begin{lemma}\label{lem:Prime Distribution}
For $B \to \infty$ we have
\begin{align}
\sum_{\substack{2 \leqslant r \leqslant B \\ r \text { prime }}} {\ln} r=(1+o(1)) B,\label{eq:prime1}
\\ \prod_{\substack{2 \leqslant r \leqslant B \\ r \text { prime }}} \frac{r-1}{r}=\Theta\left(\frac{1}{{\ln} B}\right) \label{eq:prime2}.
\end{align}
\end{lemma}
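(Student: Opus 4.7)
The plan is to recognize both identities as classical facts from analytic number theory and derive them from the Prime Number Theorem (PNT) and Mertens' theorems; both occur in exactly this form in~\cite{harvey2022log}, which we could also cite directly.

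For~\eqref{eq:prime1}, the left-hand side is Chebyshev's theta function $\theta(B):=\sum_{p \le B} \ln p$, so the assertion is $\theta(B)=(1+o(1))B$, which is well-known to be equivalent to PNT. First I would recall PNT in the form $\pi(B)=(1+o(1))B/\ln B$ and apply Abel summation to rewrite
\[
\theta(B) \;=\; \pi(B)\ln B \;-\; \int_{2}^{B}\frac{\pi(t)}{t}\,dt.
\]
The first term equals $B(1+o(1))$ by PNT, while the integral is $O(B/\ln B)=o(B)$ using the upper bound $\pi(t)=O(t/\ln t)$. Adding these gives the claim.

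For~\eqref{eq:prime2}, this is Mertens' third theorem; even the sharper form $\prod_{p \le B}(1-1/p)=(e^{-\gamma}+o(1))/\ln B$ is classical, and the $\Theta$-statement is an immediate consequence. To derive the $\Theta$-bound from scratch, I would take logarithms and split
\[
\ln\prod_{p \le B}\!\Bigl(1-\tfrac{1}{p}\Bigr) \;=\; -\sum_{p \le B}\frac{1}{p} \;+\; \sum_{p \le B}\!\Bigl[\ln(1-1/p)+\tfrac{1}{p}\Bigr].
\]
Mertens' second theorem gives $\sum_{p \le B}1/p = \ln\ln B + M + o(1)$, while the bracketed term contributes a series that converges absolutely since $\ln(1-1/p)+1/p = O(1/p^{2})$ and $\sum_p 1/p^{2}<\infty$. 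Exponentiating then produces $(c+o(1))/\ln B$ for an explicit positive constant $c$, which yields the desired two-sided bound.

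The main obstacle is not really mathematical. Both PNT and Mertens' theorems are deep but invoked here as black boxes; once they are available, \eqref{eq:prime1} reduces to an Abel-summation exercise and \eqref{eq:prime2} to an absolute-convergence check on the tail series. The only mildly delicate point would be bookkeeping the error terms carefully if one wanted the sharper asymptotic with explicit constant $e^{-\gamma}$, but neither refinement is needed for the statements as written.
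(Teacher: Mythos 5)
Your proof is correct. The paper states this lemma without giving a proof, treating both identities as classical: \eqref{eq:prime1} is the statement $\theta(B)\sim B$ for Chebyshev's theta function, which is one of the standard equivalent forms of the Prime Number Theorem, and \eqref{eq:prime2} is a weakened form of Mertens' third theorem. Your derivations — Abel summation for the first, and the $\sum_p 1/p = \ln\ln B + M + o(1)$ plus absolutely convergent tail argument for the second — are the textbook routes, and you rightly observe that both could equally be cited directly (they appear in \cite{harvey2022log} and in any standard analytic-number-theory reference).
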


To choose a proper module, we design the following lemma:

\begin{lemma}\label{lem:Prime Product Construction}
For sufficiently large input $x \in \mathbb{N}$, there exists a deterministic polynomial-time algorithm that outputs an integer $m = \prod_{p \in S} p$ where $S$ is a set of distinct primes, satisfying: 
$$
\frac{x}{2} < m < 2x, \quad \frac{\varphi(m)}{m} = \Theta\left( \frac{1}{\log\log x} \right)
$$
\end{lemma}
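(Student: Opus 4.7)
My plan is to take $m$ to be the product of all primes below a threshold of order $\log x$, with at most one small prime removed for fine-tuning, and to verify both requirements via the Mertens-type bounds in Lemma~\ref{lem:Prime Distribution} together with Bertrand's postulate. Concretely, I will enumerate the primes $q_1<q_2<\cdots$ by a sieve and iteratively compute the primorial-like products $P_i:=q_1q_2\cdots q_i$ until the first index $k$ with $P_{k+1}>2x$ is reached. Equation~\eqref{eq:prime1} gives $q_{k+1}=(1+o(1))\ln(2x)$, so only $O(\log x/\log\log x)$ primes, each of size $O(\log x)$, are ever enumerated, and all subsequent arithmetic stays on integers of bit-length $O(\log x)$; the construction therefore runs in polynomial time.

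If $P_k>x/2$, I return $m:=P_k\in(x/2,2x]$ and stop. Otherwise $P_k\le x/2$, and I instead seek a prime $q_j$ with $j\le k$ such that $m:=P_{k+1}/q_j\in(x/2,2x)$. Rewriting this membership yields the open condition
\[
\frac{P_{k+1}}{2x}\;<\;q_j\;<\;\frac{2P_{k+1}}{x},
\]
an interval of ratio $4$ whose lower endpoint exceeds $1$ (from $P_{k+1}>2x$) and whose upper endpoint is at most $q_{k+1}$ (the hypothesis $P_k\le x/2$ rearranges to $2P_{k+1}/x\le q_{k+1}$). Applying Bertrand's postulate to $n:=\lceil P_{k+1}/(2x)\rceil\ge 2$ produces a prime $p\in(n,2n)$, and the direct estimate $2n<2\cdot P_{k+1}/(2x)\cdot 2=2P_{k+1}/x\le q_{k+1}$ (using $P_{k+1}>2x$) forces $p$ strictly into $(P_{k+1}/(2x),\,2P_{k+1}/x)$, so $p=q_j$ for some $j\le k$.

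For the asymptotic $\varphi$-ratio, note that in both cases $m$ is the product of all but at most one of the primes $q_1,\ldots,q_{k+1}$, each of size $\Theta(\log x)$. Equation~\eqref{eq:prime2} gives
\[
\frac{\varphi(P_{k+1})}{P_{k+1}}\;=\;\prod_{p\le q_{k+1}}\frac{p-1}{p}\;=\;\Theta\!\left(\frac{1}{\ln q_{k+1}}\right)\;=\;\Theta\!\left(\frac{1}{\log\log x}\right),
\]
and removing a single prime $q_j$ alters this ratio by a factor $q_j/(q_j-1)\in[1,2]$, which preserves the $\Theta$ estimate. The main subtlety I anticipate is precisely the strictness $p<q_{k+1}$ in Case~2: if $p$ were allowed to equal $q_{k+1}$, then $m$ would collapse to the too-small value $P_k\le x/2$. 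This is the only place where the case hypothesis $P_k\le x/2$ genuinely bites, but combined with the strict upper bound in Bertrand's postulate it closes the argument cleanly.
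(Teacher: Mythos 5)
Your proof is correct and follows essentially the same approach as the paper's: iterate the primorial past the target window, invoke Bertrand's postulate to identify a single small prime whose removal lands the product in $(x/2,2x)$, and control $\varphi(m)/m$ via the Mertens-type estimates of Lemma~\ref{lem:Prime Distribution}. The only (trivially fixable) blemish is that your Case~1 could return $m=P_k=2x$ exactly when $2x$ happens to be a primorial, so the strict upper bound $m<2x$ would fail; the paper's own construction has a comparably small degeneracy at $t=1$, and in both cases one line of cleanup (e.g.\ additionally removing $q_1=2$ in that boundary case) restores the strict inequality.
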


\begin{proof}
Label the first $n$ primes by $p_1, p_2, \dots, p_n$. We compute the product $\prod_{i=1}^n p_i$ step by step, stopping at the first stage where the product exceeds $x$. Suppose this occurs at the $k$-th prime. Then
$$
\prod_{i=1}^{k-1} p_i < x \le \prod_{i=1}^k p_i.
$$
Define $t := \left\lfloor \prod_{i=1}^k p_i / x \right\rfloor$. By Bertrand's postulate, there exists a prime $p$ with $t < p < 2t$. We scan integers from $t+1$ to $2t-1$ to find $p_s$, the prime closest to $t$. We claim that $p_s \leq p_k$. Indeed,
$$
p_k = \frac{\prod_{i=1}^k p_i}{\prod_{i=1}^{k-1} p_i} > \frac{\prod_{i=1}^k p_i}{x} \geq t.
$$
Define $m := \prod_{i=1}^k p_i / p_s$, we have
$$
\frac{x}{2}\le\frac{\prod_{i=1}^{k}p_i}{2t}<m=\frac{\prod_{i=1}^{k}p_i}{p_s}<\frac{\prod_{i=1}^{k}p_i}{t}<2x.
$$
We now estimate $k$. From \eqref{eq:prime1}, we know this implies $p_k=\Theta(\log x)$. Consequently, by \eqref{eq:prime2}
$$
\frac{\varphi(m)}m=\frac{p_s}{p_s-1} \prod_{\substack{2 \leqslant r \leqslant p_k \\ r \text { prime }}} \frac{r-1}{r}=\Theta\left(\frac{1}{\log p_k}\right) =\Theta\left(\frac{1}{\log\log x}\right).
$$
Since the algorithm only needs to check primes up to $p_k$, the total time complexity is $(\log x)^{O(1)}$, which is a polynomial in $\log x$.
\end{proof}

\section{Some Improved Toolkits}
\label{sec:betterTools}
All related works involve finding an element $\alpha$ of large order. More precisely, the works~\cite{markus2018babystep,harvey2021exponent,harvey2022log} require an $\alpha$ with $\mathrm{ord}_N(\alpha) > N^{2/5}$, while~\cite{hales2024generalization} requires $\mathrm{ord}_N(\alpha) > N^{2/(3+2r)}$. In our work, we improve these requirements to $\mathrm{ord}_N(\alpha) > N^{1/4+o(1)}$ for the former setting, and $\mathrm{ord}_N(\alpha) > N^{1/(4r)+o(1)}$ for the latter. {
In a recent independent work~\cite{oznovich2025deterministically}, the condition is further relaxed to \(N^{1/6r} \le \delta\) by a more refined analysis while the overall algorithmic framework remains unchanged.}

First, we generalize the result of Theorem 1.1 in~\cite{harvey2022deterministic} as the following Theorem.

\begin{theorem}
\label{thm:prqfactorsmall}
Let $N,s$ be a natural number and $m \in \mathbb{Z}_N^*$ such that $s, m < N$. Knowing $s$ and $m$, one can compute finds all divisor $p$ such that $p \equiv s \pmod{m},p^r | N$ in 
$$
O\left(\left\lceil\frac{ N^{1/4r}} {m}\right\rceil \frac{\log^{7+3\epsilon} N}{r^{2+\epsilon}} \right)
$$
bit operations.
\end{theorem}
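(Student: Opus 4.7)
The plan is to reformulate the problem as a family of applications of Coppersmith's method (Lemma~\ref{lem:fwh}) via a dyadic partition of the allowed range of $p$. Any candidate $p\equiv s\pmod m$ with $p^r\mid N$ can be written as $p=m x_0+s$ with $0\le x_0\le N^{1/r}/m$, and it satisfies $(m x_0+s)^r\equiv 0\pmod{p^r}$. Because $\gcd(m,N)=1$ and $p^r\mid N$, the inverse $m^{-1}\bmod N$ is also an inverse modulo $p^r$; multiplying through by $m^{-r}\bmod N$ yields the monic polynomial
\[
\tilde f(x)\;:=\;(x+m^{-1}s)^r \bmod N,
\]
of degree $\delta=r$ in $\mathbb{Z}[x]$, with the same integer root $x_0$ modulo $p^r$. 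This is the input I feed to Lemma~\ref{lem:fwh}.

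Next I would partition the interval $p\in[1,N^{1/r}]$ into $O(\log N/r)$ dyadic pieces $[P,2P]$. On each such piece, $b=p^r\ge P^r=N^{\beta}$ with $\beta:=r\log_N P\in[0,1]$, so Lemma~\ref{lem:fwh} applies with this $\beta$ and recovers all roots in $[-cN^{\beta^2/r},cN^{\beta^2/r}]$. To cover $x_0\in[P/m,2P/m]$ it suffices to take
\[
c\;=\;\left\lceil\frac{2P}{m\,N^{\beta^2/r}}\right\rceil\;=\;O\!\left(\frac{P^{1-\beta}}{m}\right).
\]
The key quantitative step is the elementary observation that, writing $P=N^{\beta/r}$, one gets $P^{1-\beta}=N^{\beta(1-\beta)/r}$; this is maximized over $\beta\in[0,1]$ at $\beta=1/2$ with value $N^{1/(4r)}$. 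Hence on \emph{every} dyadic range one has $c=O\!\bigl(\lceil N^{1/(4r)}/m\rceil\bigr)$, uniformly in $P$.

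Summing the per-range cost $O\!\bigl(\lceil N^{1/(4r)}/m\rceil\log^{6+3\epsilon}N/r^{1+\epsilon}\bigr)$ from Lemma~\ref{lem:fwh} over the $O(\log N/r)$ pieces, and performing a $\mathrm{poly}(\log N)$ divisibility check on each returned candidate $p=m x_0+s$, produces the claimed complexity
\[
O\!\left(\left\lceil\frac{N^{1/4r}}{m}\right\rceil\frac{\log^{7+3\epsilon}N}{r^{2+\epsilon}}\right).
\]

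The main obstacle I expect is the boundary regime of Lemma~\ref{lem:fwh} when $\beta$ is very small: the lattice dimension $n=\lceil\log N+1\rceil$ together with the auxiliary parameter $m_{\mathrm{copp}}=\lceil\beta n/r\rceil$ and the condition $n\ge 7$ used inside the proof of Lemma~\ref{lem:fwh} must be checked to degenerate gracefully. I would handle this with a standard fallback: for the $O(1)$ dyadic pieces in which $P$ is only polylogarithmic in $N$, I directly enumerate the $O(P/m)$ admissible residues $p\equiv s\pmod m$ in $[P,2P]$ and test $p^r\mid N$, a cost that is absorbed into the stated asymptotic. A secondary care point is that the roots returned by the lattice step must be extracted by a univariate root-finding step whose cost is dominated by the LLL call, which is standard.
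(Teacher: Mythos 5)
Your proposal is correct and takes essentially the same route as the paper's own proof in Appendix~A: both reduce to the monic polynomial $(x+m^{-1}s)^r$ having $x_0$ as a small root modulo $p^r$, both perform a dyadic partition of the range of $p$ into $O(\log N/r)$ pieces, both invoke Lemma~\ref{lem:fwh} on each piece with $\delta=r$ and $b\ge P^r=N^\beta$, and both rest on the same key observation that the translation count $c$ is uniformly bounded by $O(\lceil N^{1/(4r)}/m\rceil)$ because the exponent $\beta(1-\beta)/r$ is maximized at $\beta=1/2$. The only cosmetic difference is in how the very small pieces are handled (the paper exhausts the final interval $[0,X_k]$ with $X_k=O(1)$; you exhaust the $O(1)$ pieces where $P$ is polylogarithmic), but these are interchangeable and absorbed into the stated bound.
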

\begin{proof}
See Appendix~\ref{sec:appendixProofprq}.
\end{proof}
\begin{corollary}
\label{coro:pqmodm}
Let $N,s$ be a natural number and $m \in \mathbb{Z}_N^*$ such that $s, m < N$ and $p \equiv s \pmod{m}$ for every prime divisor $p$ of $N$. Knowing $s$ and $m$, one can factor $N$ in 
$$
O\left(\left\lceil\frac{ N^{1/4}} {m}\right\rceil \log^{7+3\epsilon} N \right)
$$
bit operations.
\end{corollary}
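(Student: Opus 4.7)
The plan is a direct specialization of Theorem~\ref{thm:prqfactorsmall} to the case $r=1$. With that substitution, the stated complexity $O\bigl(\lceil N^{1/4r}/m\rceil\, \log^{7+3\epsilon}N/r^{2+\epsilon}\bigr)$ of Theorem~\ref{thm:prqfactorsmall} collapses exactly to the claimed bound $O\bigl(\lceil N^{1/4}/m\rceil\, \log^{7+3\epsilon}N\bigr)$, so the only real task is to convert the output of that algorithm into a full prime factorization of $N$ without inflating the complexity.

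The procedure I would run has two stages. First, call the algorithm of Theorem~\ref{thm:prqfactorsmall} on input $(N,s,m)$ with $r=1$; let $D$ denote its output, which by definition is the list of divisors $p \mid N$ satisfying $p \equiv s \pmod m$. By the hypothesis of the corollary, \emph{every} prime factor of $N$ lies in the residue class $s \pmod m$, so $D$ must contain every prime divisor of $N$ (it may of course also contain composite divisors that accidentally lie in the same class). Second, I would walk through $D$, apply a deterministic polynomial-time primality test (e.g.\ AKS) to isolate the primes, and for each prime $p$ so extracted, recover the exponent $v_p(N)$ by repeated trial division. Both substeps cost $(\log N)^{O(1)}$ per element of $D$, and $|D|$ is bounded by the divisor count of $N$, which is $N^{o(1)}$; the entire post-processing is therefore absorbed into the bound from Theorem~\ref{thm:prqfactorsmall}.

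The only non-routine point, and the main obstacle, is verifying that Theorem~\ref{thm:prqfactorsmall} really does report every prime factor of $N$ rather than just those in a particular size window. Tiny primes $p<m$ are handled directly: the congruence $p\equiv s\pmod m$ with $p<m$ forces $p=s$, so any such prime is simply read off the input and checked by a single division. All remaining primes satisfy $p\geq m$ and fall squarely inside the regime that the Coppersmith-style factoring-with-a-hint argument underlying Theorem~\ref{thm:prqfactorsmall} is designed to cover. Once this size-range coverage is carried out inside the proof of Theorem~\ref{thm:prqfactorsmall} (in Appendix~\ref{sec:appendixProofprq}), the corollary follows without additional work.
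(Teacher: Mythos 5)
Your proposal is correct and matches the paper's proof, which is simply the one-liner ``Set $r=1$ in Theorem~\ref{thm:prqfactorsmall}.'' Your additional remarks on post-processing the output into a full factorization and on the coverage of small primes are reasonable fillings of details the paper leaves implicit, but they do not change the underlying argument.
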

\begin{proof}
Set $r=1$ in Theorem~\ref{thm:prqfactorsmall}.
\end{proof}
\begin{remark}
When $r=1$, Corollary~\ref{coro:pqmodm} improves Theorem 3.1 in \cite{markus2018babystep}, which obtains 
$$
O\left(\frac{ N^{1/4}} {\sqrt{m}} \log^2 N \right)
$$
when $m$ is larger than $\log^{10} N$. Another advantage is that our algorithm only requires polynomial space.
\end{remark}

Then we revisit the Order-finding algorithm (Lemma 4.1) in~\cite{hales2024generalization}.

\begin{lemma}
\label{lemma:bigOrderprq}
There is an algorithm taking as an input $N\geq 2$ and $\delta$ such that $N^{1/4r}\log^8 N\leq \delta\leq N$. It returns either some $\alpha\in\Z_N^*$ with $\ord_N(\alpha)>\delta$,
or a nontrivial factor of $N$, or ``$N$ is $r$ power free''.
Its runtime is bounded by
\[
O\left(\frac{\delta^{1/2}\log^2 N}{(\log\log \delta)^{1/2}} \right).
\]
\end{lemma}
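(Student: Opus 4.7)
My plan is to wrap a baby-step giant-step (BSGS) order test around a short sequence of candidate bases, and fall back on Theorem~\ref{thm:prqfactorsmall} in the branch where no candidate achieves order $>\delta$. Either a BSGS pass on some small $q$ certifies $\ord_N(q)>\delta$ (and we return $\alpha=q$), or every tested candidate has order $\le\delta$, in which case the collected multiplicative relations feed Theorem~\ref{thm:prqfactorsmall} to find any $r$-power divisor or certify that $N$ is $r$-power free.

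Concretely, I would enumerate small primes $q=2,3,5,\ldots$, checking $\gcd(q,N)$ and returning it as a factor if nontrivial. For each surviving $q$, I would run one BSGS pass with step size $\lceil\sqrt{\delta}\rceil$: precompute and sort the baby list $\{q^j\bmod N\}_{0\le j<\lceil\sqrt{\delta}\rceil}$, then scan giant steps $q^{k\lceil\sqrt{\delta}\rceil}\bmod N$ for $k=1,\ldots,\lceil\sqrt{\delta}\rceil$, looking for a collision. A miss certifies $\ord_N(q)>\delta$ and we return $\alpha=q$; a hit yields a specific $d_q\le\delta$ with $q^{d_q}\equiv 1\pmod N$.

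To obtain the $(\log\log\delta)^{-1/2}$ factor I would import the small-prime sieve of Section~\ref{sec:betterTools}. Using Lemma~\ref{lem:Prime Product Construction} I build an auxiliary modulus $M$ with $\varphi(M)/M=\Theta(1/\log\log\delta)$; a nontrivial $\gcd(M,N)$ already factors $N$, so we may assume $M\in\mathbb{Z}_N^\ast$. Restricting the indices driving the giant sweep to residues coprime to $M$ shrinks the effective giant list by the factor $M/\varphi(M)=\Theta(\log\log\delta)$; rebalancing the baby and giant halves then converts this into a $\sqrt{\log\log\delta}$ saving in the BSGS bound, yielding $O(\sqrt{\delta}\log^2 N/\sqrt{\log\log\delta})$ per candidate. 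Only a bounded number of candidates need be tried before either succeeding or collecting enough multiplicative data to proceed.

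In the failure branch every tested $\alpha_i$ satisfies $\alpha_i^{d_i}\equiv 1\pmod N$ with $d_i\le\delta$, and by CRT on the $d_i$ every prime $p$ with $p^r\mid N$ is pinned to an explicit arithmetic progression $p\equiv s\pmod M$. Applying Theorem~\ref{thm:prqfactorsmall} with this $(s,M)$ then either returns a prime power divisor (hence a factor of $N$) or exhausts all candidates, certifying $r$-power-freeness; the lower bound $\delta\ge N^{1/4r}\log^8 N$ is calibrated precisely so that the cost $O(\lceil N^{1/4r}/M\rceil\log^{7+3\epsilon}N/r^{2+\epsilon})$ of Theorem~\ref{thm:prqfactorsmall} stays within the BSGS budget. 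The main obstacle I anticipate is this failure branch: showing deterministically that ``every tested small prime has order $\le\delta$'' does pin any $r$-power prime divisor $p$ of $N$ to a single small progression modulo $M$ (rather than a union of many cosets), and verifying that the sieving modulus $M$ can be reused verbatim between the BSGS and Theorem~\ref{thm:prqfactorsmall} stages so the $\sqrt{\log\log\delta}$ saving is preserved end-to-end.
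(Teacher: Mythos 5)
Your skeleton (BSGS order tests on small candidate bases, falling back to Theorem~\ref{thm:prqfactorsmall} when every tested base has small order) matches the paper's strategy, but the crucial step you yourself flag as ``the main obstacle'' is in fact a genuine gap, and it is exactly the ingredient the paper's Algorithm~\ref{alg:prqfindBigAlpha} supplies. Knowing that $\alpha_i^{d_i}\equiv 1\pmod N$ for some $d_i\le\delta$ gives you \emph{no} congruence on a prime divisor $p$ of $N$; it only tells you $\ord_p(\alpha_i)\mid d_i$, which is not the same as $d_i\mid p-1$. The missing step is: factor each $d_i=\ord_N(\alpha_i)$ and, for every prime $\ell\mid d_i$, compute $\gcd\!\bigl(N,\alpha_i^{d_i/\ell}-1\bigr)$. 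If some such gcd is nontrivial you have split $N$; if all of them equal $1$, then $\ord_{p'}(\alpha_i)=d_i$ \emph{exactly} for every prime $p'\mid N$, hence $d_i\mid p'-1$, and taking $M_e=\operatorname{lcm}_i d_i$ gives the deterministic progression $p'\equiv 1\pmod{M_e}$ you need to invoke Theorem~\ref{thm:prqfactorsmall} with $s=1$, $m=M_e$. Without this order-exactness check your ``CRT on the $d_i$'' does not produce a valid $(s,M)$ pair, and the fallback branch cannot be triggered.

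There is a second, smaller issue. You try to manufacture the $(\log\log\delta)^{-1/2}$ saving inside your own BSGS pass by ``restricting the indices driving the giant sweep to residues coprime to $M$,'' but in a collision search $q^{j}=q^{kS}$ the unknown quantity is $\ord_N(q)$, and restricting the index set $j$ or $k$ to a coprime class does not preserve completeness of the search in any obvious way. The paper sidesteps this entirely by invoking Lemma~\ref{lemma:prqFingingOrderLemma} (Hittmeir's Theorem~6.1), which already delivers an order-finding routine with runtime $O\bigl(T^{1/2}\log^2 N/\sqrt{\log\log T}\bigr)$; the $\log\log$ saving there is obtained by raising the base to a product of small primes so that the residual order has controlled small-prime content, not by thinning a BSGS index set. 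If you want to avoid citing that lemma you would need to reproduce that mechanism, not the one you sketch. Finally, note that the paper's algorithm also needs a two-stage order test (first threshold $T=\delta^{1/2}/\log^2 N$, then $T=\delta$) and a doubling argument on $M_e$ to bound the number of iterations; these are what keep the total within $O\bigl(\delta^{1/2}\log^2 N/\sqrt{\log\log\delta}\bigr)$, and your proposal does not account for how many bases $q$ must be tried before $M$ grows large enough.
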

\begin{proof}
See Appendix~\ref{sec:appendixFindprq}.
\end{proof}

Finally, we revisit the Order-finding algorithm in~\cite{markus2018babystep,harvey2021exponent,harvey2022log}.

\begin{lemma}
\label{lemma:improvehit2018}
There is an algorithm with the following properties.
It takes as input integers $N\geq 2$ and $\delta$ such that $N^{1/4}\log^8 N\leq \delta\leq N$.
It returns either some $\alpha\in\Z_N^*$ with $\ord_N(\alpha)>\delta$,
or a nontrivial factor of $N$, or ``$N$ is prime''.
Its runtime is bounded by
\[
O\left(\frac{\delta^{1/2}\log^2 N}{(\log\log \delta)^{1/2}} \right).
\]
\end{lemma}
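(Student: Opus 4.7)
The plan is to mirror the proof of Lemma~\ref{lemma:bigOrderprq} with $r=1$, substituting Corollary~\ref{coro:pqmodm} for Theorem~\ref{thm:prqfactorsmall} and the output ``$N$ is prime'' for ``$N$ is $r$-power free''. The algorithm proceeds in three phases: cheap preprocessing, a loglog-optimized baby--step giant--step search for a large-order element, and a fallback factoring routine in case the order turns out small.

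In the preprocessing phase I would test whether $N$ is a proper perfect power via Newton-root extraction for $2\le k\le \lceil\log_2 N\rceil$ (returning a factor if so), and optionally run an AKS-style deterministic primality test in $O(\log^{6+o(1)} N)$ bit operations to short-circuit the ``$N$ is prime'' output; both costs are polylogarithmic and absorbed into the $\Omega(\delta^{1/2})$ main budget. In the BSGS phase, set $\alpha=2$, pick a smooth modulus $m$ via Lemma~\ref{lem:Prime Product Construction} so that $\varphi(m)/m=\Theta(1/\log\log\delta)$, let $L=\lceil\delta^{1/2}\rceil$, and sort-and-match the baby-step list $\{(\alpha^i\bmod N,i):0\le i<L,\ \gcd(i,m)=1\}$ (of size $\Theta(\delta^{1/2}/\log\log\delta)$) against the giant-step list $\{(\alpha^{jL}\bmod N,j):0\le j<L\}$. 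Balancing as in~\cite{harvey2022log} yields total cost $O(\delta^{1/2}\log^2 N/(\log\log\delta)^{1/2})$; if no collision is found then $\mathrm{ord}_N(\alpha)>\delta$ and we return $\alpha$.

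Otherwise $d:=\mathrm{ord}_N(\alpha)\le\delta$ and $N\mid\alpha^d-1$. For each prime $r\mid d$ I would compute $\gcd(\alpha^{d/r}-1,N)$; any nontrivial value is a factor of $N$. If every such gcd equals $1$ then $\mathrm{ord}_p(\alpha)=d$ for every prime $p\mid N$, forcing $p\equiv 1\pmod d$, so Corollary~\ref{coro:pqmodm} with residue $1$ and modulus $d$ either returns a prime factor in $O(\lceil N^{1/4}/d\rceil\log^{7+3\epsilon} N)$ time or certifies that no such factor exists---which, combined with the perfect-power test, forces $N$ to be prime.

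The main obstacle is controlling the cost of this last step uniformly in $d$, since for small $d$ the factor $\lceil N^{1/4}/d\rceil$ threatens to dominate the BSGS budget. The resolution, carried out exactly as in the appendix proof of Lemma~\ref{lemma:bigOrderprq}, is to enlarge $d$ by running the BSGS on a bounded number of additional bases $\alpha=3,5,\ldots$ and taking least common multiples of the observed orders: each new base either yields a Miller--Rabin factoring witness via iterated square roots of $1$, or raises the effective modulus in the final Corollary~\ref{coro:pqmodm} call until $\lceil N^{1/4}/d\rceil$ is $O(\delta^{1/2})$ (up to logarithmic factors), after which the procedure fits the claimed bound.
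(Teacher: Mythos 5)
The high‑level structure you propose (BSGS order search, then gcd‑splitting on the prime factorization of the observed order, then a Coppersmith fallback via Corollary~\ref{coro:pqmodm} applied with residue $1$ and modulus the observed order) matches the paper's Algorithm~\ref{alg:prqfindBigAlpha}, and the final step of the argument (if all the splitting gcds are trivial then $\operatorname{ord}_p(\alpha)=d$ for every prime $p\mid N$, hence $p\equiv 1\pmod d$) is correct. However there are two substantive gaps.

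First, the coprimality sieve you describe for the baby‑step list does not work in the order‑finding setting. You want to detect whether $d:=\operatorname{ord}_N(\alpha)\le\delta$ by writing $d=jL-i$ and matching $\alpha^{jL}$ against $\alpha^i$. If the baby steps are restricted to indices $i$ with $\gcd(i,m)=1$ for a smooth modulus $m$, then you only ever catch orders $d$ for which $-d\bmod L$ happens to be coprime to $m$ --- a condition that depends on $L\bmod m$ and has no structural reason to hold for an arbitrary order. This is exactly where the factoring‑style $\log\log$ argument breaks: in~\cite{harvey2022log} the sieve is justified because prime divisors of $N$ are automatically coprime to small primes, whereas $\operatorname{ord}_N(\alpha)$ can be divisible by anything. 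The paper does not attempt such a sieve; it invokes the deterministic order‑finding bound of~\cite[Theorem~6.1]{markus2018babystep} as a black box (Lemma~\ref{lemma:prqFingingOrderLemma}), whose $1/\sqrt{\log\log T}$ saving comes from a different mechanism, and sets $r=1$ in Lemma~\ref{lemma:bigOrderprq}.

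Second, the cost accounting for iterating over bases $\alpha=2,3,5,\dots$ is missing the two‑phase structure that makes Algorithm~\ref{alg:prqfindBigAlpha} fit the budget. You describe running the full $O(\delta^{1/2}\cdots)$ BSGS on each new base; since the lcm of observed orders can take $\Theta(\log\delta)$ bases to grow past the threshold, this overruns the claimed bound by a factor of $\log\delta$. The paper avoids this by first running Lemma~\ref{lemma:prqFingingOrderLemma} with the \emph{small} budget $T=\delta^{1/2}/\log^2 N$ on each base (cost $O(\delta^{1/4+o(1)})$ per iteration, negligible over $O(\log\delta)$ iterations), and only escalating to $T=\delta$ (cost $O(\delta^{1/2}\log^2 N/\sqrt{\log\log\delta})$) a single time, at which point the loop terminates. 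Deferring to ``carried out exactly as in the appendix'' does not resolve this, because the appendix's argument depends on both the black‑box Lemma~\ref{lemma:prqFingingOrderLemma} and this small‑budget‑first discipline, neither of which appears in your sketch.
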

\begin{proof}
Setting $r=1$ in Lemma~\ref{lemma:bigOrderprq}.
\end{proof}
\begin{remark}
In \cite[Remark~6.4]{markus2018babystep}, the author conjectured that the restriction could potentially be relaxed to $\delta \ge N^{1/3+o(1)}$ for suitable $o(1)$. Through the application of Theorem~\ref{thm:prqfactorsmall}, we successfully improve the lower bound of $\delta$ from $N^{2/5}$ to $N^{1/4+o(1)}$. Given that the algorithm's complexity is $\delta^{1/2+o(1)}$, this lemma remains applicable for potential future improvements in deterministic integer factorization algorithms targeting complexities of $N^{1/6+o(1)}$ or even $N^{1/8+o(1)}$.
\end{remark}

\section{Starting Point: Factoring $N=pq$}
\label{sec:factor}
\subsection{Factoring Algorithm for Balance Semiprime}

For the convenience of the reader,
we recall the following algorithm from \cite{harvey2021exponent},
which forms a key subroutine of the main search algorithm presented afterwards.
\begin{algorithm}[Finding collisions]\ \\
\label{alg:collisions}
\textit{Input:}
\begin{inputoutputlist}
\item A positive semiprime $N=pq$.
\item A positive integer $\kappa$, and an element {$\gamma$}\,$\in\Z_N^*$ such that $\ord_N({\gamma}) \geq \kappa$.
\item Elements $v_1,\ldots,v_n\in\Z_N$ for some positive integer $n$,
such that $v_h\neq {\gamma}^i$ for all $h\in\{1,\ldots,n\}$ and $i\in\{0,\ldots,\kappa-1\}$.
\item There exists $h\in\{1,\ldots,n\}$ such that
$v_h\equiv {\gamma}^i \pmod p$ or $v_h\equiv {\gamma}^i \pmod q$ for some
$i\in\{0,\ldots,\kappa-1\}$.
\end{inputoutputlist}
\textit{Output:}
\begin{inputoutputlist}
\item $p$ and $q$.
\end{inputoutputlist}
\begin{algorithmic}[1]
\State Using Lemma \ref{lem:tree} (product tree), compute the polynomial
\[
f(x) \coloneqq (x-v_1)\cdots (x-v_n)\in\Z_N[x].
\]
\State Using Lemma \ref{lem:bluestein} (Bluestein's algorithm), compute the values $f({\gamma}^i)\in\Z_N$ for $i=0,\ldots,\kappa-1$.
\For{$i=0,\ldots,\kappa-1$}
\State Compute $\gamma_i \coloneqq \gcd(N,f({\gamma}^i))$.
\If {$\gamma_i\notin\{1,N\}$} recover $p$ and $q$ and return.
\EndIf
\If{$\gamma_i=N$}
\For{$h=1,\ldots,n$}
\If {$\gcd(N,v_h-{\gamma}^i)\neq 1$} recover $p$ and $q$ and return.
\EndIf
\EndFor
\EndIf
\EndFor
\end{algorithmic}
\end{algorithm}

\begin{proposition}
\label{prop:findCollisions}
Algorithm \ref{alg:collisions} is correct.
Assuming that $\kappa,n=O(N)$, its running time is
$O(n\log^3 N + \kappa\log^2 N)$.
\end{proposition}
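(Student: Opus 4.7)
The plan is to split the proof into a correctness check that traces the guaranteed collision through the algorithm, and a cost accounting that sums over each step using the arithmetic primitives recorded in the preliminaries.

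For correctness, I would fix the pair $(h^*,i^*)$ promised by the input hypothesis; without loss of generality say $v_{h^*}\equiv \gamma^{i^*}\pmod p$. Since $f(\gamma^{i^*}) = \prod_{h}(\gamma^{i^*}-v_h)$ in $\Z_N$, lifting to $\Z$ gives $p\mid f(\gamma^{i^*})$, hence $p\mid \gamma_{i^*} \coloneqq \gcd(N, f(\gamma^{i^*}))$. The divisors of $N=pq$ are $\{1,p,q,N\}$, so $\gamma_{i^*}\in\{p,N\}$. If $\gamma_{i^*}=p$, the test in line~5 already recovers both factors. If $\gamma_{i^*}=N$, then $pq\mid\prod_h(\gamma^{i^*}-v_h)$, so by primality $p\mid(\gamma^{i^*}-v_{h_1})$ and $q\mid(\gamma^{i^*}-v_{h_2})$ for some indices $h_1,h_2$. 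The standing hypothesis $v_h\neq \gamma^i$ in $\Z_N$ forbids $h_1=h_2$ (which would force $N\mid(\gamma^{i^*}-v_h)$), so $h_1\neq h_2$ and at least one of them produces a proper gcd in the inner loop, whereupon the algorithm returns.

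For the running time, Lemma~\ref{lem:tree} bounds step~1 (product tree) by $O(n\log^3 N)$, and Lemma~\ref{lem:bluestein} bounds step~2 (Bluestein multipoint evaluation) by $O((n+\kappa)\log^2 N)$. The outer \texttt{for} loop performs $\kappa$ integer gcds of operands of bit-length $O(\log N)$; by the half-GCD bound recalled in the preliminaries each such gcd costs $O(\log^2 N)$, contributing $O(\kappa\log^2 N)$ in total.

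The step I expect to be the main obstacle is ruling out a blow-up from the inner \texttt{for} loop, which if entered for every $i$ would add an $O(n\kappa\log^2 N)$ term and ruin the stated bound. The key observation I would use is that the inner loop is triggered only when $\gamma_i=N$, and by the correctness argument above it is guaranteed to discover a proper divisor and terminate the whole algorithm with a \textbf{return} before finishing its scan. Hence the inner loop is executed during the lifetime of the algorithm at most once, contributing a worst-case $O(n\log^2 N)$ that is absorbed into $O(n\log^3 N)$. Adding the three contributions yields the claimed bound $O(n\log^3 N+\kappa\log^2 N)$.
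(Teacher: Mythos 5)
Your proof is correct and, in spirit, matches the argument the paper delegates to (Prop.~4.1 of Harvey's paper and Prop.~4.2 of Harvey--Hittmeir): the paper itself gives no proof here beyond a citation, so what you have produced is a faithful reconstruction. You have correctly isolated the one non-obvious point, namely that the inner loop cannot inflate the cost: whenever $\gamma_i = N$, the hypothesis $v_h \neq \gamma^i$ in $\Z_N$ forces some $\gcd(N,v_h-\gamma^i)$ to be a \emph{proper} divisor, so the first time the inner loop is entered it terminates the whole algorithm, contributing at most one $O(n\log^2 N)$ scan. That is exactly the observation needed to avoid the spurious $O(n\kappa\log^2 N)$ term.

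Two small tidyings you might make. First, your ``$h_1\neq h_2$'' detour is not needed: once $p\mid\gamma^{i}-v_{h_1}$, the hypothesis $v_{h_1}\neq\gamma^{i}$ in $\Z_N$ already gives $N\nmid\gamma^{i}-v_{h_1}$, hence $\gcd(N,\gamma^{i}-v_{h_1})=p$ directly. Second, state the inner-loop-succeeds claim for \emph{arbitrary} $i$ with $\gamma_i=N$, not just for the distinguished $i^*$; your argument works verbatim for any such $i$, and that is what the cost bound actually requires. With those cosmetic edits, the proof is complete and the cost accounting (product tree $O(n\log^3 N)$, Bluestein $O((n+\kappa)\log^2 N)$, $\kappa$ gcds $O(\kappa\log^2 N)$, one inner scan $O(n\log^2 N)$) sums to the claimed $O(n\log^3 N+\kappa\log^2 N)$.
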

\begin{proof}
This is exactly Proposition 4.1 in \cite{harvey2021exponent} and Proposition 4.2 in \cite{harvey2022log}.
\end{proof}

We now present the main search algorithm, the idea is to generalize Coppersmith's method by relaxing the determinant constraint $\det(L) < p^{md}$. 

\begin{algorithm}[The main search]\ \\   
\label{alg:search}
\textit{Input:}
\begin{inputoutputlist}
\item A semiprime $N =pq$ with $cN^{\beta}<p \le N^{\beta},1/3\le\beta\le 1/2$, where $c<1$ is a constant.
\item Positive integers $m,{m^{-1} \pmod N}\in \mathbb{Z}_N^*$ such that $72< m < N^{(1-\beta)/2}/2, X=\left\lfloor \frac{ N^{\beta}}{m }\right\rfloor$.

\item An element $\alpha\in\Z_N^*$ such that for all $i\in [k]$, $\gcd(\alpha^{m^2i}-1,N)= 1$ where
$$
k \coloneqq \left\lceil \frac{2\cdot 3^{5/4}N^{1/2}}{cm^{3/2}}\right\rceil.
$$
\end{inputoutputlist}
\textit{Output:}
$p$ and $q$.
\begin{algorithmic}[1]
\For{$i=0,\ldots,k-1$} \label{step:babystep-loop} \Comment{Computation of babysteps}
\State Compute $\alpha^{m^2i} \pmod N$. \label{step:babystep}

\EndFor
\For{$j=1, \ldots, m$} \Comment{Computation of giantsteps}
    \If{$\gcd(j,m) = 1$} \label{step:jmgcd}
        \State \label{step:lattice-construction}
        Construct 3-rank lattice with basis
        \begin{equation}
        \label{eq:lattice}
        B=\left(\begin{array}{ccc}
        N & 0 & 0\\
        j{m^{-1}} & X& 0\\
        j^2{m^{-2}} & 2j{m^{-1}}X & X^2
        \end{array}\right)
        \end{equation}
        \State \label{step:L1}
        Apply Lemma \ref{lem:lll} (LLL Algorithm) and take the second vector
        \label{step:second vector}
        $$
        v_{j}=(c_{j},b_{j}X,a_{j}X^2).
        $$
        \State \label{step:giantstep}
        Compute 
        $$
        x_{j}=\alpha^{c_{j}m^2+b_{j}m(1-j)+a_{j}(1-j)^2} \pmod N
        $$
    \EndIf
\EndFor
\State \label{step:match}%
Applying a sort-and-match algorithm to the babysteps and giantsteps
computed in Steps \ref{step:babystep} and \ref{step:giantstep},
find all pairs $(i, j)$ such that
\begin{equation}
\label{eq:modn}
\alpha^{m^2 i} \equiv x_{j} \pmod N.
\end{equation}
For each such match, solve the quadratic equation
$$
c_{j}+b_{j}\frac{(p-j)}{m}+a_{j}\frac{(p-j)^2}{m^2}=ip
$$
If $p$ is found, return $p$ and $N/p$.

\State \label{step:collisions}%
Let $x_{1},\ldots, x_{n}$ be the list of giantsteps computed in Step \ref{step:giantstep},
skipping those that were discovered in Step \ref{step:match}
to be equal to one of the babysteps.
Apply Algorithm \ref{alg:collisions} (finding collisions)
with $N$, $\kappa \coloneqq k$,
{\(\gamma \coloneqq \alpha^{m^2} \pmod N\)} and $x_{1},\ldots, x_{n}$ as inputs.
\State \label{step:mainreturn}%
Return $p$ and $q$.
\end{algorithmic}
\end{algorithm}

Before we prove the correctness of Algorithm \ref{alg:search}, we would like to explain the core step of the main search.

The main idea is to use the Coppersmith method to compute the giantsteps, which is different with \cite{harvey2022log}. We also gain the loglog speed up by sieving on small primes. In common Coppersmith method, suppose one knows $x,m$ such that $p \equiv x\bmod m$, if $\log_N (p/m)\le  (\log_Np)^2 $ then one could factor $N$ in polynomial time. We use the same lattice in dimension $3$. While the classical Coppersmith method requires strict adherence to the Howgrave-Graham lemma's inequality, we present a generalization that relaxes these constraints. Specifically, we remove the strict requirement that $\det < p^d$. Although the vectors obtained from our modified LLL approach may not be as short, they can be utilized to construct giant steps. We then employ Harvey's baby-step giant-step method~\cite{harvey2021exponent} to find collisions and factor $N$.

The reason why we choose the second vector by LLL algorithm, is that the shortest vector is the second row of the basis itself. The collision by this shortest vector is trivial. We prove this in the following lemma.

\begin{lemma}
\label{lem:second vector}
Let $N\in \mathbb{N},m,j \in \mathbb{Z}_m^{*}$. Let ${m^{-1}}\in\mathbb{Z}_N^*$ denote the multiplicative inverse of $m$ modulo $N$. Define ${t \coloneqq \frac{m\,m^{-1}-1}{N}}\in\mathbb{Z}$, $\tilde{p}_j = j{m^{-1}}$, $X = \lfloor N^{\beta}/m \rfloor$, $1/3 \leq \beta \leq 1/2, 72 < m < N^{(1-\beta)/2}/2$. Then the shortest vector in the lattice

$$
B = \begin{pmatrix}
N & 0 & 0\\
\tilde{p}_j & X & 0\\
\tilde{p}_j^2 & 2\tilde{p}_jX & X^2
\end{pmatrix}
$$
is 
$$
v_0 = -tj(N, 0, 0) + m({\tilde{p}_j}, X, 0) = (j, mX, 0),
$$
and the second vector $v_2$ obtained from the LLL algorithm has a non-zero third coordinate.
\end{lemma}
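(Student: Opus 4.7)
My plan is to organize the proof in three stages: verify $v_0 \in L$, show it attains $\lambda_1(L)$, and derive a contradiction from the assumption that the second LLL basis vector has zero third coordinate by combining the standard LLL product bound with Hadamard, Lovász, and Gram--Schmidt estimates.

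For membership, the combination $-tj\,(N,0,0) + m\,(\tilde p_j, X, 0)$ collapses via $m m^{-1} = 1 + tN$ and $\tilde p_j = j m^{-1}$ to $v_0 = (j, mX, 0)$; this yields $\|v_0\|^2 = j^2 + m^2 X^2 \leq 2 m^2 X^2$ once $X \geq 1$ (which follows from the parameter bounds).

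For the first-minimum claim, write a general lattice element $v = (aN + b\tilde p_j + c\tilde p_j^2,\,(b + 2c\tilde p_j)X,\,cX^2)$ and split on $c$. If $c \neq 0$, the third coordinate alone forces $\|v\| \geq X^2$, and under the stated bound $m < N^{(1-\beta)/2}/2$ with $X = \lfloor N^\beta/m\rfloor$ one checks $X^2 > \sqrt 2\, mX \geq \|v_0\|$. If $c = 0$, then $v \in L_2$; letting $r_b$ denote the centered first coordinate, the identity $m r_b \equiv b j \pmod N$ (obtained by multiplying $m \tilde p_j \equiv j \pmod N$ by $b$), combined with $\gcd(j, m) = 1$, forces $m \nmid b j$ whenever $b \not\equiv 0 \pmod m$. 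Any representation $m r_b = b j + k N$ then has $|k| \geq 1$; bounding $|b j| \ll N$ from the parameter constraints yields $|r_b| \geq (N - |b j|)/m$, which in this regime dominates $\|v_0\|$. This establishes $v_0$ as the unique (up to sign) shortest vector.

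For the second LLL vector, I argue by contradiction. Assume $v_2$ has zero third coordinate. Then $v_1 = \pm v_0$ and $v_2$ both lie in $L_2$ and together span it (since $L/L_2 \cong X^2\mathbb{Z}$ is cyclic), so $v_3$ must have third coordinate $\pm X^2$; the Gram--Schmidt projection $v_3^*$ perpendicular to $\mathrm{span}(v_1^*, v_2^*)\subset\{z=0\}$ retains this coordinate, giving $\|v_3^*\| \geq X^2$. Combining the LLL product bound $\prod\|v_i\| \leq 2^{3/2}\det L = 2\sqrt 2\,NX^3$ with Hadamard's $\|v_1\|\|v_2\| \geq \det L_2 = NX$ yields $\|v_3\| \leq 2\sqrt 2\,X^2$. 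Lovász's inequality $\|v_3^*\|^2 \geq \tfrac12\|v_2^*\|^2$ then gives $\|v_2^*\| \leq 4X^2$, and the Gram--Schmidt identity $\|v_1\|\,\|v_2^*\|\,\|v_3^*\| = \det L = NX^3$ forces $\|v_1\| \geq N/(8\sqrt 2\,X)$. But the LLL first-vector guarantee $\|v_1\| \leq 2\|v_0\| \leq 2\sqrt 2\,mX$ combined with the previous inequality gives $N \leq 32\, m X^2 \leq 32\, N^{2\beta}/m$, i.e., $m \leq 32\, N^{2\beta-1} \leq 32$ for $\beta \leq 1/2$, contradicting $m > 72$. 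Hence $v_2$ has nonzero third coordinate.

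The main obstacle I expect is the Stage-3 chain of LLL/GSO estimates: threading the geometric lower bound $\|v_3^*\| \geq X^2$ through the product bound, Hadamard, Lovász, and the Gram--Schmidt identity so that the parameter $m$ ends up constrained by a pure numerical constant. The choice of threshold $72$ in the lemma's hypothesis $m > 72$ lines up precisely with the contradiction margin $m \leq 32$ produced by this chain, which serves as a reassuring consistency check on the parameter regime.
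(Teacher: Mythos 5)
Your Stage~3 chain (LLL product bound $\Rightarrow$ Hadamard on $L_2$ $\Rightarrow$ Lov\'asz $\Rightarrow$ Gram--Schmidt volume identity) is an attractive and genuinely different route to the contradiction than the paper's Minkowski-Second-Theorem bound on $\lambda_2$ plus Cauchy--Schwarz, but the proposal has two gaps that break it.

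\textbf{Stage 2, case $c\neq 0$.} You claim $X^2 > \sqrt{2}\,mX$, i.e.\ $X>\sqrt{2}\,m$. This is false in general under the stated hypotheses. With $X=\lfloor N^\beta/m\rfloor$, the inequality $X>\sqrt2\,m$ amounts (up to constants) to $N^\beta>\sqrt2\,m^2$, and since $m$ may be as large as $N^{(1-\beta)/2}/2$ this requires $N^{1-2\beta}=O(1)$, which fails for every fixed $\beta<1/2$ as $N\to\infty$. Concretely, for $\beta=1/3$ and $m\approx N^{1/3}/4$ one has $X\approx 4$, so $X^2\approx 16$ while $\|v_0\|\approx mX\approx N^{1/3}$: a vector with $c\neq0$ of norm $\approx X^2$ is far shorter than $v_0$, and the third-coordinate lower bound gives no contradiction. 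The paper handles \emph{all} $c$ uniformly by applying Cauchy--Schwarz to the integer identity
$m^2(aN+bjm^{-1})-jm(b+cjm^{-1})+cj^2=(am^2+bjtm-cj^2t)N$,
forcing $am^2+bjtm-cj^2t=0$, hence $m\mid c$, hence $c=0$ from $|c|<m$; you would need something with this flavour. (Your $c=0$ subcase also silently omits the branch $b\equiv 0\pmod m$, where one must explicitly check that the only short options are $\pm v_0$.)

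\textbf{Stage 3, $v_1\in L_2$.} You open with ``Then $v_1=\pm v_0$.'' LLL does not return the shortest lattice vector as $v_1$; it only guarantees $\|v_1\|\le 2\lambda_1$. What the argument actually needs is $v_1\in L_2$ (zero third coordinate), and this does not follow: a nonzero third coordinate only forces $\|v_1\|\ge X^2$, which is compatible with $\|v_1\|\le 2\lambda_1\le 2\sqrt2\,mX$ precisely in the regime where $X$ is small relative to $m$ --- the same regime where your Stage~2 estimate failed. If $v_2\in L_2$ but $v_1\notin L_2$, then Hadamard on $\det L_2$, the bound $\|v_3^*\|\ge X^2$, and the identity $\|v_1\|\,\|v_2^*\|\,\|v_3^*\|=\det L$ all lose their justification, because $v_1,v_2$ no longer span $L_2$ and $v_3$ need not carry the full $\pm X^2$ third coordinate after projection. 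The paper avoids this by bounding $\lambda_2<3^{3/4}N^{1/2+\beta}/m^{3/2}$ from Minkowski's Second Theorem and then showing, via the same Cauchy--Schwarz identity specialized to $c=0$, that any sufficiently short vector of $L_2$ is an integer multiple of $v_0$, contradicting linear independence of $v_1,v_2$. That route makes no assumption about which LLL vectors lie in $L_2$.
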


\begin{proof}
We first demonstrate that $v_0$ is indeed the shortest vector in the lattice. Suppose, for the sake of contradiction, that there exists a shorter vector. Let this vector be represented as
$$
v = (aN+bj{m^{-1}}, (b+cj{m^{-1}})X, cX^2).
$$
By our assumption, the squared norm of $v$ must satisfy
$$
\|v\|^2 = (aN+bj{m^{-1}})^2 + (b+cj{m^{-1}})^2X^2 + c^2X^4 \leq \|v_0\|^2 = j^2 + m^2X^2.
$$
Consider the expression
\begin{align*}
|(am^2+bjtm-cj^2t)N|^2
&= \Big(m^2(aN+bj{m^{-1}}) - jm(b+cj{m^{-1}}) + cj^2\Big)^2\\
&\leq \Big((aN+bj{m^{-1}})^2+(b+cj{m^{-1}})^2X^2+c^2X^4\Big)\\
&\qquad\qquad\cdot\left(m^4+\frac{j^2m^2}{X^2}+\frac{j^4}{X^4}\right)
\qquad{\text{(by Cauchy--Schwarz)}}\\
&< (j^2+m^2X^2)\left(m^4+\frac{m^4}{X^2}+\frac{m^4}{X^4}\right)\\
&< 2N^{2\beta} \cdot 2m^4 < N^2,
\end{align*}
which implies that $am^2+bjtm-cj^2t=0$. Since $\gcd(j, m) = 1$ and $\gcd(t, m) = 1$, we must have $m \mid c$. However, we know that $|c| < 2N^{\beta}/X^2 < m$, which forces $c = 0$. 

Substituting this back into our equation yields $am+bjt=0$. Again, using the facts that $\gcd(j, m) = 1$ and $\gcd(t, m) = 1$, there must exist an integer $k$ such that $b = km$ and $a = -ktj$. This means that $v$ is an integer multiple of $v_0$, contradicting our assumption that $v$ is the shortest vector distinct from $v_0$.

We now prove that the second vector $v_2$ obtained from the LLL algorithm has a non-zero third coordinate. By Theorem \ref{thm:minkowski} and the fact that $\lambda_3 \geq \lambda_2 \geq \lambda_1 > N^{\beta}$, we have
\begin{equation}
\label{eq:lambda2}
\lambda_1\lambda_2\lambda_3 \leq (\sqrt{3})^3\det B \Rightarrow \lambda_2 \leq \left(\frac{3^{3/2}\det B}{\lambda_1}\right)^{1/2} < \frac{3^{3/4}N^{1/2+\beta}}{m^{3/2}}.
\end{equation}
From the properties of the LLL algorithm (Lemma \ref{lem:lll}), we know that
$$
\|v_2\| \leq 2\lambda_2 < \frac{2\cdot3^{3/4}N^{1/2+\beta}}{m^{3/2}}.
$$
If the third coordinate of $v_2$ were zero, then using the same analysis as above, we could write $v_2 = (aN + bj{m^{-1}}, bX, 0)$. This would give us
$$
(aN+bj{m^{-1}})^2+b^2X^2 = \|v_2\|^2 < \frac{4\cdot 3^{3/2}N^{1+2\beta}}{m^{3}}.
$$
Similarly,
\begin{align*}
|(am+bjt)N|^2 &= \Big(m(aN+bj{m^{-1}})-bj\Big)^2\\
&\leq \Big((aN+bj{m^{-1}})^2+b^2X^2\Big)\left(m^2+\frac{j^2}{X^2}\right)\\
&< \frac{4\cdot 3^{3/2}N^{1+2\beta}}{m^{3}} \cdot 2m^2\\
&= \frac{8\cdot 3^{3/2}N^{1+2\beta}}{m} < N^2,
\end{align*}
which implies $am+bjt=0$. Since $\gcd(j,m)=1$ and $\gcd(t,m)=1$, we must have $b=km$ and $a=-ktj$ for some integer $k$. But this would make $v_2$ an integer multiple of the shortest vector $v_0$, contradicting the linear independence of the basis vectors.
\end{proof}

Now we will prove the following proposition of our main search.

\begin{proposition}
\label{pro:main search}
Algorithm \ref{alg:search} is correct. It runs in time
\begin{equation}
\label{eq:search}
O\left(\varphi(m)\log^3 N + \frac{N^{1/2} \log^2 N}{m^{3/2}}\right).
\end{equation}
\end{proposition}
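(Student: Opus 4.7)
The plan has two parts: correctness, by analyzing the critical iteration $j\equiv p\pmod m$, and a per-iteration cost audit to recover the bound \eqref{eq:search}. I first fix $j=p\bmod m$ and write $p=mx_0+j$, so $|x_0|\le X=\lfloor N^\beta/m\rfloor$. Every row of the basis in \eqref{eq:lattice}, read through the $X$-scaling, is the coefficient vector of a polynomial vanishing at $x_0$ modulo $p$: the first gives $N$, and the others give $f(xX)$ and $f(xX)^2$, where $f(x)=x+jm^{-1}$ satisfies $f(x_0)\equiv 0\pmod p$. Hence, for the second LLL vector $v_j=(c_j,b_jX,a_jX^2)$, the polynomial $g_j(x):=c_j+b_jx+a_jx^2$ satisfies $p\mid g_j(x_0)$, so I may write $g_j(x_0)=ip$. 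Using $|x_0|\le X$ and Cauchy--Schwarz gives $|g_j(x_0)|\le\sqrt{3}\,\|v_j\|$, and the LLL bound $\|v_j\|\le 2\lambda_2$ combined with \eqref{eq:lambda2} then yields
\[
|i|\;\le\;\frac{2\cdot 3^{5/4}\,N^{1/2}}{c\,m^{3/2}}\;\le\;k,
\]
so, after possibly replacing $v_j$ by $-v_j$ to force $i\ge 0$, the baby step with index $i$ lies in the precomputed list. Lemma~\ref{lem:second vector} is precisely what rules out $a_j=0$, which is needed so that the quadratic in $p$ below is nondegenerate.

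Next, I produce the modular collision: since $mx_0=p-j\equiv 1-j\pmod{p-1}$, Fermat's little theorem yields
\[
\alpha^{im^2}\;\equiv\;\alpha^{m^2 g_j(x_0)}\;\equiv\;\alpha^{c_jm^2+b_jm(1-j)+a_j(1-j)^2}\;=\;x_j\pmod p,
\]
which Step~\ref{step:match} or Step~\ref{step:collisions} detects. If the collision is exact modulo $N$, I recover $p$ by solving the integer quadratic $a_jp^2+(b_jm-2a_jj-im^2)p+(c_jm^2-b_jmj+a_jj^2)=0$, nondegenerate because $a_j\ne 0$. Otherwise the match holds only modulo $p$; the list passed to Algorithm~\ref{alg:collisions} satisfies its preconditions (exact modulo-$N$ hits are stripped in Step~\ref{step:match}, and the input hypothesis $\gcd(\alpha^{m^2i}-1,N)=1$ for $1\le i<k$ supplies the order bound on $\gamma:=\alpha^{m^2}\bmod N$), so Proposition~\ref{prop:findCollisions} guarantees that the batched-gcd pass recovers $p$.

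For the running time, Step~\ref{step:babystep} builds the baby-step list incrementally by $k-1$ modular multiplications from $\alpha^{m^2}\bmod N$, costing $O(k\,\mathsf{M}(\log N))=O(k\log^2 N)$ after absorbing the $\log\log N$ factor. The giant-step loop performs $\varphi(m)$ iterations of Steps~\ref{step:lattice-construction}--\ref{step:giantstep}; each costs $(\log N)^{O(1)}$ for the $3$-dimensional LLL on integers of bit-length $O(\log N)$, plus one modular exponentiation of cost $O(\log^3 N)$, for $O(\varphi(m)\log^3 N)$ overall. Invoking Proposition~\ref{prop:findCollisions} in Step~\ref{step:collisions} with $n=\varphi(m)$ and $\kappa=k$ adds $O(\varphi(m)\log^3 N+k\log^2 N)$. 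Substituting $k=O(N^{1/2}/m^{3/2})$ gives the bound \eqref{eq:search}. The main point requiring care is the sign handling for $i$ together with verifying that the filtered giant-step list genuinely meets the preconditions of Algorithm~\ref{alg:collisions}.
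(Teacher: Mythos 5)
Your proof follows essentially the same route as the paper's: fix the critical iteration $j=p\bmod m$, write $p=mx_0+j$ with $x_0=p_{\mathrm{msb}}$, attach the quadratic $g_j$ to the second LLL vector, write $g_j(x_0)=ip$, bound $|i|<k$ via Cauchy--Schwarz together with the $\lambda_2$ estimate \eqref{eq:lambda2} from Lemma~\ref{lem:second vector}, obtain the collision $\alpha^{im^2}\equiv x_j\pmod p$ from Fermat's little theorem, and split according to whether the match holds modulo $N$ (solve the quadratic, nondegenerate since $a_j\neq 0$) or only modulo $p$ (Algorithm~\ref{alg:collisions}); your cost accounting reaches the same dominant terms $O(\varphi(m)\log^3 N + k\log^2 N)$. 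One remark: your phrase ``after possibly replacing $v_j$ by $-v_j$ to force $i\ge 0$'' correctly flags that the Cauchy--Schwarz argument only delivers $|i|<k$ rather than $0\le i<k$, which is needed because the baby-step list and the index range in Algorithm~\ref{alg:collisions} use nonnegative exponents only; however, the replacement you suggest cannot actually be performed by the algorithm, since the sign of $i$ depends on the unknown $x_0$. The robust fix (without changing the asymptotics) is to also feed the inverses $x_j^{-1}\bmod N$ into Step~\ref{step:match} and Algorithm~\ref{alg:collisions}, or equivalently to extend the baby-step range to $\{-(k-1),\dots,k-1\}$; the paper's proof passes over this point silently, so your observation is a genuine, if not fully resolved, catch.
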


\begin{proof}
We first prove the correctness of the algorithm. {Write
\[
p = m\,p_{\mathrm{msb}} + p_{\mathrm{lsb}}, \qquad 0 \le p_{\mathrm{lsb}} < m,
\]
where $p_{\mathrm{msb}}$ and $p_{\mathrm{lsb}}$ denote, respectively, the most-significant and least-significant parts of $p$ in base $m$. Since $p$ is prime and $\gcd(m,N)=1$, we have $\gcd(m,p)=1$, hence $p_{\mathrm{lsb}}\in \mathbb{Z}_m^*$. In Step~\ref{step:jmgcd} we exhaust all $j\in\{1,\ldots,m\}$ with $\gcd(j,m)=1$, so the correct index $j=p_{\mathrm{lsb}}\equiv p \bmod m$ will be found.

Fix $j=p_{\mathrm{lsb}}$. We then use a Coppersmith-type construction combined with a BSGS search to recover $p_{\mathrm{msb}}$. Consider
\[
f(x)\coloneqq x+m^{-1}p_{\mathrm{lsb}}\bmod N,
\]
so that $x=p_{\mathrm{msb}}$ is a root of $f \bmod p$. The three-dimensional lattice we constructed in Step \ref{step:lattice-construction} is spanned by $N,f(x),f^2(x)$, thus $p_{\mathrm{msb}}$ is a root modulo $p$ of every polynomial encoded by a lattice vector, and in particular of $v_j=(c_j,\, b_jX,\, a_jX^2)$. Therefore
\[
p \mid c_j + b_j\,p_{\mathrm{msb}} + a_j\,p_{\mathrm{msb}}^{2}.
\]
}
Let $c_j + b_j\,p_{\mathrm{msb}} + a_j\,p_{\mathrm{msb}}^{2} = i p$. By the Cauchy–Schwarz inequality,
\begin{equation}\label{eq:cauchy}
i^2 p^2 \;=\; \big(c_j + b_j\,p_{\mathrm{msb}} + a_j\,p_{\mathrm{msb}}^{2}\big)^2
\;\le\; 3\big(c_j^2 + b_j^2\,p_{\mathrm{msb}}^2 + a_j^2\,p_{\mathrm{msb}}^{4}\big)
\;\le\; 3\|v_j\|^2.
\end{equation}
From \eqref{eq:lambda2} we have
\begin{equation}\label{eq:vj}
\|v_j\| \le 2\lambda_2 < \frac{2 \cdot 3^{3/4}\,N^{1/2+\beta}}{m^{3/2}}.
\end{equation}
Combining \eqref{eq:cauchy} and \eqref{eq:vj}, we obtain
\[
i \;<\; \frac{2 \cdot 3^{5/4}\,N^{1/2+\beta}}{m^{3/2}p}
\;<\; \frac{2 \cdot 3^{5/4}\,N^{1/2}}{c\,m^{3/2}}
\;\le\; k.
\]
Furthermore,
\[
m\,p_{\mathrm{msb}} = p - p_{\mathrm{lsb}} \equiv 1 - j \pmod{p-1}.
\]
Hence
\[
\alpha^{i m^2}
\equiv \alpha^{i p m^2}
\equiv \alpha^{m^2\big(c_j + b_j\,p_{\mathrm{msb}} + a_j\,p_{\mathrm{msb}}^{2}\big)}
\equiv \alpha^{c_j m^2 + b_j m(1-j) + a_j(1-j)^2}
= x_j \pmod{p}.
\]
Therefore there is a collision modulo $p$ between the babysteps computed in Step~\ref{step:babystep} and the giantsteps computed in Step~\ref{step:giantstep}.

If this collision is also a collision modulo $N$, then Step~\ref{step:match} identifies the pair $(i,j)$ and solves the corresponding quadratic. By Lemma~\ref{lem:second vector} we have $a_j\neq 0$, hence solving the quadratic returns $p$.

Otherwise, if the collision is only modulo $p$, then Step~\ref{step:collisions} recovers $p$.

We now analyze the running time. The number of babysteps is $k$, so the cost of Steps~\ref{step:babystep-loop}–\ref{step:babystep} is
\[
O\big(k \log N (\log \log N)^2\big).
\]
In Step~\ref{step:jmgcd} we compute $m$ gcds of integers bounded by $O(N)$, costing
\[
O\big(m \log N (\log \log N)^2\big).
\]
For the giantsteps, there are $\varphi(m)$ indices $j$. By Lemma~\ref{lem:lll}, LLL on our $3$-dimensional lattice costs $O(\log^{2+\varepsilon} N)$, so Steps~\ref{step:lattice-construction}–\ref{step:L1} cost
\[
O\big(\varphi(m)\log^{2+\varepsilon} N\big).
\]
Step~\ref{step:giantstep} can be carried out in time
\[
O\big(\varphi(m)\log N \log\log N\big).
\]
In Step~\ref{step:match}, we build a list of pairs $(\alpha^{m^2 i},i)$ of length $k$ and a list of tuples $(x_j, j)$ of length $O(\varphi(m))$. Each entry uses $O(\log N)$ bits. Sorting these lists by their first component via merge sort takes
\[
O\big((k + \varphi(m)) \log^2 N\big)
\]
bit operations. Since our input condition $\gcd(\alpha^{m^2 i}-1,N)=1$ for all $i\in[k]$ ensures that the babysteps are pairwise distinct, each $x_j$ matches with at most one babystep. Matching the two sorted lists hence takes
\[
O\big((k + \varphi(m)) \log N\big).
\]
There are at most $k$ matches, so solving the quadratic equations in Step~\ref{step:match} takes
\[
O\big(k \log N \log\log N\big).
\]
Finally, Step~\ref{step:collisions} invokes Algorithm~\ref{alg:collisions} with complexity
\[
O\big(\varphi(m)\log^3 N + k\log^2 N\big)
= O\!\left(\varphi(m)\log^3 N + \frac{N^{1/2}\log^2 N}{m^{3/2}}\right).
\]
Summing the costs of all steps, we obtain the overall bound
\[
O\!\left(\varphi(m)\log^3 N + \frac{N^{1/2}\log^2 N}{m^{3/2}}\right).
\]
\end{proof}

Then we present an algorithm to find the element $\alpha\in \mathbb{Z}_N^*$ for Algorithm \ref{alg:search}.

\begin{algorithm}[Finding $\alpha$]\ \\   
\label{alg:findalpha-bsgs}
\textit{Input:}
\begin{inputoutputlist}
\item A semiprime $N = pq$.
\item A positive integer $k$, and $m = O(N)$ with known factorization $m=\prod_{p_j\in S} p_j^{\alpha_j}$, where $S$ is a set of distinct primes.
\end{inputoutputlist}
\textit{Output:}
\begin{inputoutputlist}
\item Either an element $\alpha \in \mathbb{Z}_N^*$ such that for all $i \in \{1,\ldots,k\}$, $\gcd(\alpha^{m^2 i} - 1, N) = 1$; or the prime factors $p$ and $q$.
\end{inputoutputlist}
\begin{algorithmic}[1]
\State \label{step:hit2018-bsgs}
Apply Lemma \ref{lemma:improvehit2018} with $D \coloneqq \lceil N^{1/3}\rceil$.
If any factors of $N$ are found, return.
Otherwise, obtain $\alpha \in \mathbb{Z}_N^*$ with $\ord_N(\alpha) > D$.
\State \label{step:initialize} {Compute $\beta \gets \alpha^{m^2} \bmod N$, and $n \gets \lceil \sqrt{k} \rceil$.}
\State \label{step:comput} {Compute $\beta^{-1} \bmod N$ and the list of inverses $\beta^{-t} \bmod N$ for $t=1,2,\ldots,n$.}
\State \label{step:computpol} {Using Lemma \ref{lem:tree}, build the polynomial
$f(x) \gets \prod_{t=1}^{n}\bigl(x-\beta^{-t}\bigr) \in \mathbb{Z}_N[x]$.}
\State \label{step:evaluateall} {Using Lemma \ref{lem:bluestein}, evaluate
$y_i \gets f\!\left(\beta^{\,in}\right) \in \mathbb{Z}_N$ for $i=0,1,\ldots,n-1$, i.e., obtain $f(1), f(\beta^{n}), \ldots, f(\beta^{(n-1)n})$ modulo $N$.}
\For{$i=0,1,\ldots,n-1$} \label{step:scan-blocks}
    \State \label{step:block-gcd} {$g_i \gets \gcd(y_i, N)$.}
    \If{{$g_i \notin \{1, N\}$}} \label{step:block-factor}
        \State {Return $g_i$ and $N/g_i$.}
    \EndIf
    \If{{$g_i = N$}} \label{step:block-N} \Comment{{Localize within the block $B_i=\{in+1,\ldots,in+n\}$}}
        \For{{$j=1,2,\ldots,n$}} \label{step:scan-within-block}
            \State \label{step:delta-eval} {$\delta \gets \gcd\!\bigl(\beta^{\,in+j}-1, N\bigr)$.}
            \If{{$\delta \notin \{1, N\}$}} \label{step:delta-factor}
                \State {Return $\delta$ and $N/\delta$.}
            \EndIf
            \If{{$\delta = N$}} \label{step:delta-N}
                \State \label{step:ordreduce-init} $r \gets m^2\cdot (in+j)$, and factor $m^2 = \prod_{p_\ell \in S} p_\ell^{2\alpha_\ell}$.
                \For{each $p_\ell \in S$} \label{step:ordreduce-divloop}
                    \While{$p_\ell \mid r$ and $\gcd\!\bigl(\alpha^{\,r/p_\ell}-1, N\bigr) = N$} \label{step:ordreduce-divcond}
                        \State $r \gets r/p_\ell$ \label{step:ordreduce-div}
                    \EndWhile
                \EndFor
                \For{each $p_\ell \in S$ with $p_\ell \mid r$} \label{step:ordreduce-splitloop}
                    \State \label{step:ordreduce-split} $\delta' \gets \gcd\!\bigl(\alpha^{\,r/p_\ell}-1, N\bigr)$.
                    \If{$\delta' \notin \{1, N\}$} \label{step:ordreduce-splitfound}
                        \State Return $\delta'$ and $N/\delta'$.
                    \EndIf
                \EndFor
                \State \label{step:coro-factor} Using Corollary \ref{coro:pqmodm} with $p\equiv 1 \pmod r$ to factor $N$, and return $p$ and $q$.
            \EndIf
        \EndFor
    \EndIf
\EndFor
\State \label{step:return-alpha} Return $\alpha$.
\end{algorithmic}
\end{algorithm}

\begin{proposition}
\label{pro:findbeta}
{Algorithm \ref{alg:findalpha-bsgs}} is correct. It runs in time
$$
{O\left(\frac{N^{1/6}\log^2 N}{(\log\log N)^{1/2}}
+\sqrt{k}\,\log^3 N\right).}
$$
\end{proposition}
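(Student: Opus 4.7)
The plan is to prove correctness by case analysis on the three subroutines that can terminate the algorithm (the order-finder of Lemma~\ref{lemma:improvehit2018}, the BSGS product tree of Lemma~\ref{lem:tree}, and Corollary~\ref{coro:pqmodm}), and to bound the running time by identifying those same subroutines as the dominant cost centers.

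For correctness, I would first observe that Step~\ref{step:hit2018-bsgs} applied with $D=\lceil N^{1/3}\rceil$ either returns a nontrivial factor of $N$ or produces an $\alpha\in\Z_N^*$ with $\ord_N(\alpha)>N^{1/3}$ (the ``$N$ is prime'' branch cannot occur for a semiprime). Setting $\beta=\alpha^{m^2}$, a direct expansion gives
\[
y_i \;=\; f\!\bigl(\beta^{in}\bigr) \;=\; \beta^{-n(n+1)/2}\prod_{t=1}^{n}\bigl(\beta^{in+t}-1\bigr),
\]
so $\gcd(y_i,N)=1$ iff $\gcd(\beta^{in+t}-1,N)=1$ for every $t\in\{1,\ldots,n\}$. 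Since the pairs $(i,t)$ make $in+t$ sweep $\{1,\ldots,n^{2}\}\supseteq\{1,\ldots,k\}$, the branch where every $g_i=1$ is precisely the case where $\alpha$ satisfies the promised output condition, so Step~\ref{step:return-alpha} is correct. If some $g_i$ is a nontrivial divisor of $N$ we factor at Step~\ref{step:block-factor}; if $g_i=N$, the inner scan at Step~\ref{step:scan-within-block} isolates the exact index $j$ with $\delta\neq 1$, and either $\delta$ factors $N$ or $\delta=N$ and we enter the order-reduction branch.

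In that last branch we have $\alpha^{r}\equiv 1\pmod N$ with $r=m^{2}(in+j)$. The loops at Steps~\ref{step:ordreduce-divloop}--\ref{step:ordreduce-splitloop} use the known factorization of $m$ to strip primes of $S$ from $r$ while preserving the congruence. On exit, for every $p_\ell\in S$ with $p_\ell\mid r$ either $\gcd(\alpha^{r/p_\ell}-1,N)$ is a nontrivial factor (returned at Step~\ref{step:ordreduce-splitfound}) or equals $1$, in which case $v_{p_\ell}(\ord_p(\alpha))=v_{p_\ell}(r)$ for both prime factors $p\mid N$. Hence the $S$-part of $r$ divides $p-1$ for every $p\mid N$, and Corollary~\ref{coro:pqmodm} factors $N$ at Step~\ref{step:coro-factor}.

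For the running time, Step~\ref{step:hit2018-bsgs} costs $O(N^{1/6}\log^{2}N/(\log\log N)^{1/2})$ by Lemma~\ref{lemma:improvehit2018}. The dominant BSGS cost is Step~\ref{step:computpol}, which by Lemma~\ref{lem:tree} runs in $O(\sqrt{k}\log^{3}N)$; the list of inverses, the Bluestein evaluation at $n$ points by Lemma~\ref{lem:bluestein}, and the outer scan of $n$ gcds are each $O(\sqrt{k}\log^{2}N)$ and strictly subordinate. The main obstacle I anticipate is showing that the ``rare branch'' ($g_i=N$, $\delta=N$) also stays within this budget: the inner scan adds $O(\sqrt{k})$ modular exponentiations with exponent $O(k)$, the order-reduction loop contributes only $O(\log N)$ further exponentiations, and the final call to Corollary~\ref{coro:pqmodm} must be absorbed. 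This last check is delicate and rests on a lower bound $s\gtrsim N^{1/3}/(k+O(\sqrt{k}))$ on the $S$-part of the reduced $r$, which follows from $\ord_N(\alpha)>N^{1/3}$ together with the observation that every non-$S$ factor of $r$ must divide $in+j\leq n^{2}=O(k)$.
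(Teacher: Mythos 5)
Your correctness argument matches the paper's up to the order-reduction branch, but your treatment of Step~\ref{step:coro-factor} has a genuine gap. You conclude only that the $S$-part of the reduced $r$ divides $p-1$, and you propose to pass that $S$-part to Corollary~\ref{coro:pqmodm}, bounding it below by roughly $N^{1/3}/k$. The algorithm, however, invokes Corollary~\ref{coro:pqmodm} with the modulus $r$ itself, not its $S$-part; and even if one used the $S$-part, the bound $N^{1/3}/k$ with $k=\Theta(N^{1/5+o(1)})$ gives an $S$-part that can be as small as $N^{2/15+o(1)}$, which is well below $N^{1/4}$, so Corollary~\ref{coro:pqmodm} would cost $\Theta(N^{1/4}k/N^{1/3}\cdot \log^{7+3\epsilon}N)$, exceeding the claimed $\sqrt{k}\log^3 N$ budget.

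The paper closes this gap via the claim $i_0\mid r_p$ and $i_0\mid r_q$ (where $i_0=in+j$ is the first index with $\gcd(\alpha^{m^2 i_0}-1,N)\neq 1$ and $r_p=\ord_p(\alpha)$, $r_q=\ord_q(\alpha)$), proved by contradiction from the minimality of $i_0$. This is the piece your proposal is missing: your observation that ``every non-$S$ factor of $r$ must divide $in+j$'' gives only the upper bound $v_\ell(r_p)\le v_\ell(i_0)$ at non-$S$ primes $\ell$, whereas $i_0\mid r_p$ supplies the matching lower bound and hence forces $r_p$ and $r_q$ to agree at every non-$S$ prime. Combined with your (correct) valuation identity at $S$-primes after the reduction loop, this yields the dichotomy: either $r_p\neq r_q$ differs at some $S$-prime, which the splitting loop at Steps~\ref{step:ordreduce-splitloop}--\ref{step:ordreduce-splitfound} detects and returns a factor; or $r_p=r_q=r=\ord_N(\alpha)$, so the full $r>D=\lceil N^{1/3}\rceil>N^{1/4}$ divides $p-1$ for every prime $p\mid N$, and Corollary~\ref{coro:pqmodm} runs in $\operatorname{poly}(\log N)$ time. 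Without that claim you can neither justify the algorithm's use of $r$ as the modulus nor obtain the stated running time.
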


\begin{proof}
We first prove correctness. 
{Assume that the element $\alpha$ obtained in Step \ref{step:hit2018-bsgs} satisfies $\gcd(\alpha^{m^2i} - 1, N) = 1$ for all $i \in [n^2]$. Then the algorithm returns $\alpha$ in Step \ref{step:return-alpha}. Since $n^2>k$, this implies $\gcd(\alpha^{m^2i} - 1, N) = 1$ for all $i \in \{1,\ldots,k\}$.}

Otherwise, there exists some $i \in [n^2]$ such that $\gcd(\alpha^{m^2i} - 1, N) \neq 1$. Let $i_0$ be the smallest $i \in [n^2]$ such that $\gcd(\alpha^{m^2i} - 1, N) \neq 1$. If $\gcd(\alpha^{m^2i_0} - 1, N) \in \{p, q\}$, then the algorithm returns $p$ and $q$ in Step \ref{step:block-factor} (or Step \ref{step:delta-factor} if we are already inside the block-localization). The only remaining case is that $\gcd(\alpha^{m^2i_0} - 1, N) = N$.

Let $\ord_p(\alpha) = r_p$ and $\ord_q(\alpha) = r_q$. We first claim that $i_0\mid r_p$. We prove this by contradiction. Suppose $\gcd(i_0, r_p) < i_0$. Since
\[
r_p\mid i_0m^2 \Longrightarrow \frac{r_p}{\gcd(i_0, r_p)}\;\Big|\;\frac{i_0}{\gcd(i_0, r_p)}\,m^2,
\]
and $\gcd\!\bigl(r_p/\gcd(i_0, r_p),\, i_0/\gcd(i_0, r_p)\bigr) = 1$, we know that $(r_p/\gcd(i_0, r_p))\mid m^2$. Thus
\[
r_p = \gcd(i_0, r_p) \cdot \frac{r_p}{\gcd(i_0, r_p)}\;\Big|\;\gcd(i_0, r_p)\,m^2.
\]
This implies that $p\mid \gcd(\alpha^{\gcd(i_0, r_p)m^2} - 1, N)$, which contradicts the minimality of $i_0$. By similar reasoning, we can show that $i_0\mid r_q$.

At the beginning of the order-reduction stage (Step \ref{step:ordreduce-init}), we initialize $r = m^2 i_0$. The algorithm then performs a series of divisions by the prime factors of $m^2$. For each prime factor $p_j$, we divide $r$ by $p_j$ as long as $\gcd(\alpha^{r/p_j} - 1, N) = N$. This process continues until we find the smallest value $r$ such that $\alpha^r \equiv 1 \pmod{N}$.

We claim that after this process, $r$ equals $\ord_N(\alpha)$. This is because we start with a multiple of both $r_p$ and $r_q$ and gradually reduce it by removing unnecessary prime factors until we reach the minimum value that still satisfies $\alpha^r \equiv 1 \pmod{N}$.

Now, two cases are possible:

Case 1: $r_p \neq r_q$. Without loss of generality, there must exist a prime factor $p_j$ of $r_q$ such that $p_j \nmid r_p$ or $p_j$ appears with a higher exponent in $r_q$ than in $r_p$. When we compute $\gcd(\alpha^{r/p_j} - 1, N)$, we have $\alpha^{r/p_j} \equiv 1 \pmod{p}$ (since $r/p_j$ is still a multiple of $r_p$) and $\alpha^{r/p_j} \not\equiv 1 \pmod{q}$ (since $r/p_j$ is no longer a multiple of $r_q$). Therefore, $\gcd(\alpha^{r/p_j} - 1, N) = p$, and the algorithm returns $p$ and $q$ in Step \ref{step:ordreduce-splitfound}.

Case 2: $r_p = r_q = r$. In this case, we have $r = \ord_p(\alpha) = \ord_q(\alpha) = \ord_N(\alpha)$. Since $r > D > N^{1/4+o(1)}$ (from Step \ref{step:hit2018-bsgs}), we can apply Coppersmith's method as implemented in Corollary \ref{coro:pqmodm} to factor $N$ in polynomial time, and return $p$ and $q$ in Step \ref{step:coro-factor}.

This completes the proof of correctness.

We now analyze time complexity. The cost of Step \ref{step:hit2018-bsgs} from Lemma \ref{lemma:improvehit2018} is
\[
O\!\left(\frac{N^{1/6}\log^2 N}{(\log\log N)^{1/2}}\right).
\]

{To prepare the batched evaluation, we compute $\beta=\alpha^{m^2}\bmod N$ and $\beta^{-t}$ for $t=1,2,\dots,n$; this costs
$$
O\bigl((\sqrt{k}+\log m)\,\log N\,(\log \log N)^2\bigr).
$$
}
{We then form $f(x)=\prod_{t=1}^{n}(x-\beta^{-t})$ using Lemma \ref{lem:tree} and evaluate $y_i=f(\beta^{in})$ for $i=0,\dots,n-1$ by Lemma \ref{lem:bluestein} (Step \ref{step:evaluateall}), at total cost
$$
O\bigl(n\log^3 N\bigr)=O\bigl(\sqrt{k}\log^3 N\bigr).
$$
}
{The block GCDs (Step \ref{step:scan-blocks}) take $O(n)$ GCDs, i.e., $O\bigl(\sqrt{k}\,\log N\,(\log\log N)^2\bigr)$. If we enter the within-block localization (Steps \ref{step:block-N}–\ref{step:scan-within-block}), we perform at most $n=O(\sqrt{k})$ additional modular multiplications (advancing $\beta^{bn+j}$) and GCDs, for another $O\bigl(\sqrt{k}\,\log N\,(\log\log N)^2\bigr)$.}

If we go into the order-reduction stage (Steps \ref{step:ordreduce-init}–\ref{step:ordreduce-splitfound}), for each prime $p_j\in S$, we compute at most $2\alpha_j$ modular exponentiations and $2\alpha_j$ GCDs. Since $\sum \alpha_j \le \log m$, and the time cost of Coppersmith's method is polynomial in $\log N$, the total cost of Steps \ref{step:ordreduce-init}–\ref{step:coro-factor} is
\[
O\bigl(\log m \cdot \log N \, (\log\log N)^2\bigr) \;+\; \operatorname{poly}(\log N).
\]

{Summing up, the total running time is}
\[
{O\!\left(\frac{N^{1/6}\log^2 N}{(\log\log N)^{1/2}}
\;+\sqrt{k}\,\log^3 N\right),}
\]
which proves the stated bound.
\end{proof}

Finally we present the main factoring algorithm. In this algorithm, $N_0$ is a constant that is chosen large enough to ensure that the proof of correctness works for all $N \geq N_0$.

\begin{algorithm}[Factoring semiprimes]\ \\   
\label{alg:factor}
\textit{Input:} A semiprime $N =pq\ge N_0$ with $cN^{\beta}<p \le N^{\beta},1/3\le\beta\le 1/2$, where $c<1$ is a constant.
\noindent\textit{Output:}
$p$ and $q$.
\begin{algorithmic}[1]
\State\label{step:params}%
Compute 
$$
x\coloneqq\left\lfloor\frac{N^{1/5}(\log\log N)^{2/5}}{\log ^{2/5} N}\right\rfloor, 
$$
\State \label{step:m}
Apply Lemma \ref{lem:Prime Product Construction} with $x$ and get $m$ satisfying: 
$$
\frac{x}{2} < m < 2x, \quad \frac{\varphi(m)}{m} = \Theta\left( \frac{1}{\log\log N} \right).
$$
If $\gcd(N,m) \notin \{1, N\}$, recover $p$ and $q$ and return. Computing ${m^{-1}\pmod N}$ and 
$$
k \coloneqq \left\lceil \frac{2\cdot 3^{5/4}N^{1/2}}{cm^{3/2}}\right\rceil=\Theta\left(\frac{N^{1/5}\log^{3/5}N}{(\log\log N)^{3/5}}\right).
$$
\State\label{step:beta}%
Apply Algorithm \ref{alg:findalpha-bsgs} with $N,k,m$.
If any factors of $N$ are found, return. Otherwise, we obtain $\alpha \in \mathbb{Z}_N^*$ such that for all $i \in [k]$, $\gcd(\alpha^{m^2i} - 1, N) = 1$.
\State\label{step:search}%
Run Algorithm \ref{alg:search} (the main search)
with the given $N$, $m$, ${m^{-1}\pmod N}$ and $\alpha$.
Return $p$ and $q$.
\end{algorithmic}
\end{algorithm}

\begin{proposition}
Algorithm \ref{alg:factor} is correct (for suitable $N_0$),
and it runs in time
\[
O\left(\frac{N^{1/5} \log^{13/5} N}{(\log\log N)^{3/5}}\right).
\]
\end{proposition}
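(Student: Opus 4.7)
The plan is to verify the three stages of Algorithm~\ref{alg:factor} in sequence and then aggregate their costs. First I would check that the $m$ returned in Step~\ref{step:m}, which has size $m = \Theta(N^{1/5}(\log\log N)^{2/5}/\log^{2/5}N)$ by Lemma~\ref{lem:Prime Product Construction}, satisfies the precondition $72 < m < N^{(1-\beta)/2}/2$ of Algorithm~\ref{alg:search}. Since $(1-\beta)/2 \geq 1/4$ for every $\beta \in [1/3,1/2]$, this holds for all sufficiently large $N$, and this is what fixes the constant $N_0$. The case $\gcd(N,m) \notin \{1,N\}$ is dispatched directly in Step~\ref{step:m}, and the value of $k$ defined there matches the bound required by Algorithm~\ref{alg:search} via the hypothesis $p > cN^\beta$. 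Step~\ref{step:beta} then either splits $N$ or, by Proposition~\ref{pro:findbeta}, returns an $\alpha$ with $\gcd(\alpha^{m^2 i}-1,N) = 1$ for all $i \in [k]$, which is precisely the precondition of Algorithm~\ref{alg:search}. Correctness of Step~\ref{step:search} then follows from Proposition~\ref{pro:main search}.

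For the running time I would carry out a direct three-way accounting. Steps~\ref{step:params} and~\ref{step:m} are $\operatorname{poly}(\log N)$ by Lemma~\ref{lem:Prime Product Construction} and standard modular arithmetic. By Proposition~\ref{pro:findbeta}, Step~\ref{step:beta} costs
\[
O\!\left(\frac{N^{1/6}\log^2 N}{(\log\log N)^{1/2}} + \sqrt{k}\,\log^3 N\right),
\]
and since $k = \Theta(N^{1/5}\log^{3/5}N/(\log\log N)^{3/5})$ we have $\sqrt{k} = \Theta(N^{1/10} \cdot \operatorname{polylog}(N))$, so both summands are absorbed in the target bound. By Proposition~\ref{pro:main search}, Step~\ref{step:search} costs $O(\varphi(m)\log^3 N + N^{1/2}\log^2 N/m^{3/2})$, and this is where the dominant contribution will come from.

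The heart of the proof is to check that the particular choice of $x$ in Step~\ref{step:params} balances the two summands of the cost of Algorithm~\ref{alg:search} at exactly the claimed bound. Using $m = \Theta(x)$ together with $\varphi(m)/m = \Theta(1/\log\log N)$ from Lemma~\ref{lem:Prime Product Construction}, a short substitution gives both $\varphi(m)\log^3 N$ and $N^{1/2}\log^2 N/m^{3/2}$ equal to $\Theta(N^{1/5}\log^{13/5}N/(\log\log N)^{3/5})$, yielding the stated overall complexity. I do not expect a genuine obstacle here; the main care is arithmetic bookkeeping, in particular tracking how the extra $\log^{2/5}N$ factor in $x$ converts the $\log^{16/5}N$ of~\cite{harvey2022log} into the improved $\log^{13/5}N$ via the $\log^3 N$ cost of the product-tree step in the collision search.
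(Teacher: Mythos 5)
Your proposal is correct and follows essentially the same route as the paper: verify the preconditions of each subroutine, invoke Propositions~\ref{pro:findbeta} and~\ref{pro:main search} for the two nontrivial stages, and check that the choice of $x$ in Step~\ref{step:params} balances the two summands $\varphi(m)\log^3 N$ and $N^{1/2}\log^2 N/m^{3/2}$ at the claimed $\Theta(N^{1/5}\log^{13/5}N/(\log\log N)^{3/5})$. The only very minor quibble is with your closing remark: the improvement from $\log^{16/5}$ to $\log^{13/5}$ over~\cite{harvey2022log} is less a matter of "an extra $\log^{2/5}N$ factor in $x$" than of the rank-$3$ lattice giant steps replacing the more expensive giant-step machinery of that paper, but this is a loose comment about motivation and does not affect the correctness of your argument.
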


\begin{proof}
We first prove the correctness. Consider Step \ref{step:m}, we either get $p$ and $q$ or $m$ satisfying
$$
m=\Theta(x)=\Theta\left(\frac{N^{1/5}(\log\log N)^{2/5}}{\log ^{2/5} N}\right).
$$
In Step \ref{step:beta}, we either get $p$ and $q$ or obtain $\alpha \in \mathbb{Z}_N^*$ such that for all $i \in [k]$, $\gcd(\alpha^{m^2i} - 1, N) = 1$. Also, for large enough $N$, we have
$$
72<m<\frac{N^{1/4}}{2}\leq \frac{N^{(1-\beta)/2}}{2},
$$
which satisfies the requirement of Algorithm \ref{alg:search}.
In Step \ref{step:search}, we are guaranteed to get $p$ and $q$ by the Proposition \ref{pro:main search}.

Then we calculate the time complexity. Step \ref{step:params} and Step \ref{step:m} (from Lemma \ref{lem:Prime Product Construction}) are only polynomial time, which is negligible.

From Proposition \ref{pro:findbeta}, Step \ref{step:beta} costs
$$
O\left(\frac{N^{1/6}\log^2 N}{(\log\log N)^{1/2}} + {\sqrt{k}\,\log^3 N}\right),
$$
is also negligible.

From Proposition \ref{pro:main search},  Step \ref{step:search} costs
$$
O\left(\varphi(m)\log^3 N + \frac{N^{1/2} \log^2 N}{m^{3/2}}\right)=O\left(\frac{N^{1/5} \log^{13/5} N}{(\log\log N)^{3/5}}\right).
$$

\end{proof}

\begin{corollary}
Applying Algorithm \ref{alg:factor} with $\beta=1/2$, we prove Theorem \ref{thm:main}. 
\end{corollary}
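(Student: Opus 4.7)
The plan is to specialize the general factorization result established for Algorithm \ref{alg:factor} to the balanced semiprime case. First, I would check that the hypothesis $p,q = \Theta(N^{1/2})$ in Theorem \ref{thm:main} fits the input specification of Algorithm \ref{alg:factor} with $\beta = 1/2$. Writing $p \le q$ without loss of generality, we have $p \le N^{1/2}$; moreover, the assumption $p = \Theta(N^{1/2})$ furnishes a constant $c < 1$, independent of $N$, such that $cN^{1/2} < p \le N^{1/2}$. This is exactly the admissible input format for Algorithm \ref{alg:factor}, so no further preprocessing of $N$ is necessary.

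Next, I would invoke the preceding proposition on Algorithm \ref{alg:factor}, which simultaneously asserts correctness (for $N \ge N_0$) and yields the running time
\[
O\!\left(\frac{N^{1/5}\log^{13/5} N}{(\log\log N)^{3/5}}\right).
\]
Specializing to $\beta = 1/2$ does not change this bound, because the dominant step is the main search (Step \ref{step:search}) whose cost $O(\varphi(m)\log^3 N + N^{1/2}\log^2 N/m^{3/2})$ depends on $N$ and $m$ only, not directly on $\beta$, once the parameter $m = \Theta(N^{1/5}(\log\log N)^{2/5}/\log^{2/5} N)$ from Step \ref{step:m} is fixed. Thus both upper and lower bounds on the complexity match the statement of Theorem \ref{thm:main}.

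The only subtle point to verify is that the side condition $72 < m < N^{(1-\beta)/2}/2$ imposed by Algorithm \ref{alg:search} holds when $\beta = 1/2$. For this choice, the bound reads $72 < m < N^{1/4}/2$. Since $m = \Theta(N^{1/5}(\log\log N)^{2/5}/\log^{2/5} N)$, we have $m = o(N^{1/4})$, so enlarging $N_0$ if necessary ensures that both inequalities hold. Similarly, the requirement that finitely many cutoffs (such as the inequality $k \ge 2\cdot 3^{5/4}N^{1/2}/(cm^{3/2})$ used inside Proposition \ref{pro:main search}) be satisfied can be absorbed into the constant $N_0$. No new analytical obstacle arises; the corollary follows immediately by substitution.
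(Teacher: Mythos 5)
Your proposal is correct and follows the same route the paper implicitly takes (the paper states this corollary without a proof, treating it as an immediate consequence of the preceding proposition). You have usefully made explicit the two things the paper leaves unstated: that the hypothesis $p,q=\Theta(N^{1/2})$ furnishes the constant $c<1$ with $cN^{1/2}<p\le N^{1/2}$ required by the input specification of Algorithm~\ref{alg:factor} with $\beta=1/2$, and that the side condition $72<m<N^{1/4}/2$ holds for sufficiently large $N$ since $m=o(N^{1/4})$.
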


\section{Better Factoring Sums and Differences of Powers}
\label{sec:anbn}
\subsection{Factoring Algorithm for Extra Modulo Information }

In the the previous section, we showed how the bit size of $p$ can help us to improve factoring. Now we focus on how to exploit extra modulo information.

We employ the same fundamental approach as in Algorithm \ref{alg:search}, with the primary distinction being the necessity to enumerate the possible bit sizes of $p$, resulting in a factor of $\log N$ increase in the number of giantsteps. Concurrently, the congruence constraint $p \equiv r \pmod n$ reduces the number of giantsteps by a factor of $n$.

\begin{algorithm}[The main search with modulo]\ \\   
\label{alg:searchwithmod}
\textit{Input:}
\begin{inputoutputlist}
\item A semiprime $N=pq,N^{1/3}<p<N^{1/2}$, $r,n$ such that $p\equiv r\pmod n, \gcd(n,N)=1$.

\item Positive integers $m\in \mathbb{Z}_N^*$, such that $(m,n)=1,72< mn < N^{1/4}/2, s=(mn)^{-1}\pmod{N},m_{inv}=m^{-1} \pmod n, n_{inv}=n^{-1}\pmod m$.

\item An element $\alpha\in\Z_N^*$ such that for all $i\in [k]$, $\gcd(\alpha^{(mn)^2i}-1,N)= 1$ where
$$
k \coloneqq \left\lceil \frac{4\cdot3^{5/4}N^{1/2}}{(mn)^{3/2}}\right\rceil.
$$
\end{inputoutputlist}
\textit{Output:}
$p$ and $q$.
\begin{algorithmic}[1]
\For{$i=0,\ldots,k-1$} \label{step:modbabystep-loop} \Comment{Computation of babysteps}
\State Compute $\alpha^{(mn)^2i} \pmod N$. \label{step:modbabystep}
\EndFor
\For{$i=1, \ldots, \left\lceil \frac{\log N}{6}\right\rceil$}\Comment{Computation of giantsteps}
    \State Compute $X_i=\left\lfloor \frac{2^i N^{0.3}}{mn}\right\rfloor$.
    \For{$j=1, \ldots, m$} 
        \If{$\gcd(j,m) = 1$} \label{step:modjmgcd}
            \State Compute $m_j\equiv r\,n\,n_{inv}+j\,m\,m_{inv}\pmod {mn}$.
            \State \label{step:modlattice-construction}
            Construct 3-rank lattice with basis
            \begin{equation}
            \label{eq:modlattice}
            B=\left(\begin{array}{ccc}
            N & 0 & 0\\
            m_js & X_i& 0\\
            m_j^2s^2 & 2m_jsX_i & X_i^2
            \end{array}\right).
            \end{equation}
            \State \label{step:swmL1}
            Apply Lemma \ref{lem:lll} (LLL Algorithm) and take the second vector
            \label{step:modsecond vector}
            $$
            v_{ij}=(c_{ij},b_{ij}X_i,a_{ij}X_i^2).
            $$
            \State \label{step:modgiantstep}
            Compute 
            $$
            x_{ij}=\alpha^{c_{ij}(mn)^2+b_{ij}mn(1-j)+a_{ij}(1-j)^2} \pmod N.
            $$
        \EndIf
    \EndFor
\EndFor
\State \label{step:modmatch}Applying a sort-and-match algorithm to the babysteps and giantsteps computed in Steps \ref{step:modbabystep} and \ref{step:modgiantstep}, find all pairs $(i, j,\sigma)$ such that
\begin{equation}
\label{eq:modmodn}
\alpha^{(mn)^2 \sigma} \equiv x_{ij} \pmod N.
\end{equation}
For each such match, solve the quadratic equation
$$
c_{ij}+b_{ij}\frac{(p-m_j)}{mn}+a_{ij}\frac{(p-m_j)^2}{(mn)^2}=\sigma p.
$$
If $p$ is found, return $p$ and $N/p$.

\State \label{step:modcollisions}%
Let $\{x_{ij}\}$ be the list of giantsteps computed in Step \ref{step:modgiantstep},
skipping those that were discovered in Step \ref{step:modmatch}
to be equal to one of the babysteps.
Apply Algorithm \ref{alg:collisions} (finding collisions)
with $N$, $\kappa \coloneqq k$,
${\gamma} \coloneqq \alpha^{(mn)^2} \pmod N$ and $\{x_{ij}\}$ as inputs.
\State \label{step:modreturn}%
Return $p$ and $q$.
\end{algorithmic}
\end{algorithm}

\begin{proposition}
\label{pro:modmain search}
Algorithm \ref{alg:searchwithmod} is correct. It runs in time
\begin{equation}
\label{eq:modsearch}
O\left(\varphi(m)\log^4 N + \frac{N^{1/2} \log^2 N}{(mn)^{3/2}}\right).
\end{equation}
\end{proposition}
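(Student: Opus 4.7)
The plan is to mirror the proof of Proposition 4.4 for Algorithm 4.2, with the three adjustments forced by the presence of the extra modulus $n$: the candidate residue modulo $mn$ must be recombined by CRT, an outer loop enumerates the bit-size bound $X_i$, and the shortest-vector lemma must be re-derived with $mn$ (resp.\ $m_j$) in place of $m$ (resp.\ $j$). Write $p=mn\cdot p_{\mathrm{msb}}+p_{\mathrm{lsb}}$ with $p_{\mathrm{lsb}}=p\bmod mn$. Because $p\equiv r\pmod n$ and $\gcd(mn,N)=1$, as $j$ runs over $(\mathbb{Z}/m\mathbb{Z})^{*}$ the CRT combination $m_j\equiv r\,n\,n_{inv}+j\,m\,m_{inv}\pmod{mn}$ sweeps out exactly the residue classes modulo $mn$ compatible with $r\bmod n$, so for $j=p\bmod m$ we obtain $m_j=p_{\mathrm{lsb}}$. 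Since the $X_i$ form a geometric sequence with ratio $2$ covering the range $[N^{0.3}/mn,\,N^{1/2}/mn]$, the correct outer iteration satisfies $p_{\mathrm{msb}}\le X_i\le 2p_{\mathrm{msb}}$; the $\lceil\log N/6\rceil$ steps suffice to exhaust this range.

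For the correct $(i,j)$, the polynomial $f(x)=x+m_j s\pmod N$ (with $s=(mn)^{-1}\bmod N$) vanishes modulo $p$ at $x=p_{\mathrm{msb}}$, because $mn\cdot f(p_{\mathrm{msb}})\equiv mn\,p_{\mathrm{msb}}+m_j= p\equiv 0\pmod p$. Therefore every integer combination of the rows $N$, $f(xX_i)$, $f(xX_i)^2$ of the basis $B$ in (5.2) produces a polynomial vanishing at $p_{\mathrm{msb}}$ modulo $p$, and in particular the second LLL vector $v_{ij}=(c_{ij},b_{ij}X_i,a_{ij}X_i^2)$ does. The shortest-vector argument of Lemma 4.4 transfers almost verbatim: under the hypothesis $mn<N^{1/4}/2$ the same Cauchy--Schwarz estimate forces the shortest lattice vector to be $(m_j,mn\,X_i,0)$, and a second application shows that any second basis vector has nonzero third coordinate, hence $a_{ij}\neq 0$.

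Setting $g_{ij}(x)=c_{ij}+b_{ij}x+a_{ij}x^{2}$, we have $p\mid g_{ij}(p_{\mathrm{msb}})$; write $g_{ij}(p_{\mathrm{msb}})=\sigma p$. Minkowski's second theorem together with $\lambda_1\ge mn\,X_i$ and $\det(L)=NX_i^{3}$ yields $\lambda_2\le 3^{3/4}X_i(N/mn)^{1/2}$. Combining LLL's bound $\|v_{ij}\|\le 2\lambda_2$ with $|\sigma|\,p\le\sqrt 3\,\|v_{ij}\|$ (from Cauchy--Schwarz applied to $g_{ij}(p_{\mathrm{msb}})$) and $X_i\le 2p_{\mathrm{msb}}\le 2p/mn$ gives $|\sigma|\le 4\cdot 3^{5/4}N^{1/2}/(mn)^{3/2}\le k$. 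The identity $mn\,p_{\mathrm{msb}}=p-m_j\equiv 1-m_j\pmod{p-1}$ and Fermat's little theorem then yield
\[
\alpha^{(mn)^{2}\sigma}\;\equiv\;\alpha^{(mn)^{2}\sigma p}\;\equiv\;\alpha^{(mn)^{2}g_{ij}(p_{\mathrm{msb}})}\;\equiv\;\alpha^{c_{ij}(mn)^{2}+b_{ij}mn(1-m_j)+a_{ij}(1-m_j)^{2}}\;=\;x_{ij}\pmod{p},
\]
so the babystep for index $\sigma$ collides with the giantstep $x_{ij}$ modulo $p$. If the collision holds modulo $N$, then since $a_{ij}\neq 0$ the quadratic $c_{ij}+b_{ij}(p-m_j)/(mn)+a_{ij}(p-m_j)^{2}/(mn)^{2}=\sigma p$ in Step~\ref{step:modmatch} returns $p$; otherwise Algorithm~\ref{alg:collisions} invoked in Step~\ref{step:modcollisions} recovers $p$ deterministically.

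For the running time, there are $k=\Theta(N^{1/2}/(mn)^{3/2})$ babysteps, each a modular exponentiation of cost $O(\log N\,(\log\log N)^{2})$. The outer loop runs $O(\log N)$ times and the inner loop visits $\varphi(m)$ residues; each iteration performs one constant-dimensional LLL at cost $O(\log^{2+\epsilon}N)$ and one exponentiation, giving total giantstep cost $O(\varphi(m)\log^{3+\epsilon}N)$. Sorting and matching the two lists of combined length $O(k+\varphi(m)\log N)$ with $O(\log N)$-bit entries costs $O((k+\varphi(m)\log N)\log^{2}N)$, and the closing call to Algorithm~\ref{alg:collisions} with $\kappa=k$ and $n'=O(\varphi(m)\log N)$ costs $O(\varphi(m)\log^{4}N+k\log^{2}N)$ by Proposition~\ref{prop:findCollisions}. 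Summing yields the claimed bound $O\!\bigl(\varphi(m)\log^{4}N+N^{1/2}\log^{2}N/(mn)^{3/2}\bigr)$. The only nontrivial obstacle is verifying that the structural lemma behind the second-vector analysis (Lemma~\ref{lem:second vector}) still applies after replacing $m$ by $mn$ and $j$ by $m_j$; the hypothesis $mn<N^{1/4}/2$ is exactly what makes the original Cauchy--Schwarz computation go through.
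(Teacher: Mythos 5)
Your proposal is correct and takes essentially the same route as the paper's proof: base-$mn$ decomposition of $p$, CRT recombination of $m_j$, a geometric sweep of $X_i$, the second-LLL-vector/Cauchy--Schwarz bound $|\sigma|\le 4\cdot 3^{5/4}N^{1/2}/(mn)^{3/2}\le k$, Fermat's little theorem to form the giantstep exponent, and the cost attribution to Algorithm~\ref{alg:collisions} via Proposition~\ref{prop:findCollisions}. The only cosmetic difference is that you re-derive the $\lambda_2$ bound from Minkowski's second theorem directly rather than citing inequality~\eqref{eq:lambda2}, and the paper is slightly more explicit that $\beta_i=\log_N(X_i mn)\in[1/3,1/2]$ and $mn<N^{(1-\beta_i)/2}/2$ are what Lemma~\ref{lem:second vector} requires—but the substance is identical.
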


\begin{proof}
The proof follows a similar structure to that of Proposition \ref{pro:main search}. We first establish the correctness of the algorithm. { Let
\[
p = mn\,p_{\mathrm{msb}} + p_{\mathrm{lsb}}, \qquad 0 \le p_{\mathrm{lsb}} < mn,
\]
where $p_{\mathrm{msb}}$ and $p_{\mathrm{lsb}}$ denote, respectively, the most-significant and least-significant parts of $p$ in base $mn$. Since $p$ is prime and $\gcd(mn,N)=1$, we have $\gcd(mn,p)=1$, hence $p_{\mathrm{lsb}}\in \mathbb{Z}_{mn}^*$. In Step~\ref{step:modjmgcd} we exhaust all $j\in\{1,\ldots,m\}$ with $\gcd(j,m)=1$, so the correct index $j_0=p_{\mathrm{lsb}}\equiv p \bmod m$ will be found.

For $j_0 = (p \bmod m)$, the Chinese remainder theorem implies that $p_{\mathrm{lsb}} = m_{j_0}$, which gives us $p | m_{j_0}s + p_{\mathrm{msb}}$. Consider $f(x)\coloneqq x+m_{j_0}s\bmod p$, and $x=p_{\mathrm{msb}}$ is a root of $f \bmod p$.

Setting $t = \left\lceil \log N/6\right\rceil$, we observe that $X_0 \leq |p_{\mathrm{msb}}| < \lfloor p/(mn) \rfloor \leq \lfloor N^{0.5}/(mn) \rfloor = X_t$. Therefore, there exists an index $i_0$ such that $X_{i_0-1} \leq |p_{\mathrm{lsb}}| \leq X_{i_0}$, which implies that $p \geq mnX_{i_0-1} = mnX_{i_0}/2$.

The three-dimensional lattice we constructed in Step \ref{step:modlattice-construction} is spanned by $N,f(x),f^2(x)$, thus $p_{\mathrm{msb}}$ is a root modulo $p$ of every polynomial encoded by a lattice vector, and in particular of $v_{i_0j_0} = (c_{i_0j_0}, b_{i_0j_0}X_{i_0}, a_{i_0j_0}X_{i_0}^2)$. Therefore
\[
p | c_{i_0j_0} + b_{i_0j_0}p_{\mathrm{msb}} + a_{i_0j_0}p^2_{\mathrm{msb}}.
\]
}
Let $c_{i_0j_0} + b_{i_0j_0}p_{\mathrm{msb}} + a_{i_0j_0}p^2_{\mathrm{msb}} = \sigma p$. By the Cauchy-Schwarz inequality, we have
\begin{equation}
\label{eq:modcauchy}
\sigma^2p^2 = (c_{i_0j_0} + b_{i_0j_0}p_{\mathrm{msb}} + a_{i_0j_0}p^2_{\mathrm{msb}})^2 \leq 3\|v_{i_0j_0}\|^2.
\end{equation}
Working with modulo $mn$, we note that for $i = 0, \ldots, t$, we have $\beta_i = \log_N(X_imn) \in [1/3, 1/2]$ and $mn < N^{1/4}/2 \leq N^{(1-\beta_i)/2}/2$, which satisfies the conditions of Lemma \ref{lem:second vector}.

From equation \eqref{eq:lambda2}, we derive 
\begin{equation}
\label{eq:modvj}
\|v_{i_0j_0}\| \leq 2\lambda_2 < \frac{2 \cdot 3^{3/4}N^{1/2+\beta_{i_0}}}{(mn)^{3/2}}.
\end{equation}
Combining inequalities \eqref{eq:modcauchy} and \eqref{eq:modvj}, we obtain
$$
\sigma < \frac{2 \cdot 3^{5/4}N^{1/2+\beta_{i_0}}}{(mn)^{3/2}p} \leq \frac{4 \cdot 3^{5/4}N^{1/2}}{(mn)^{3/2}} \leq k.
$$
Following the same reasoning as in Proposition \ref{pro:main search}, we have $\alpha^{\sigma (mn)^2} \equiv x_{i_0j_0} \pmod{p}$. So there must exist a collision modulo $p$ between the babysteps and the giantsteps.

If this collision also manifests as a collision modulo $N$, then Step \ref{step:modmatch} will identify the triplet $(i_0, j_0, \sigma)$ and solve the corresponding quadratic equation. By Lemma \ref{lem:second vector}, we know that $a_{i_0j_0} \neq 0$, ensuring that we can successfully recover $p$ by solving the quadratic equation.

Alternatively, if the collision only occurs modulo $p$ but not modulo $N$, the algorithm will still determine $p$ in Step \ref{step:modcollisions}.

We now analyze the time complexity of the algorithm. The number of babysteps is $k$, and the number of giantsteps is $t\varphi(m)$. Therefore, the computational cost of finding collisions between babysteps and giantsteps is
$$
O(k \log^2 N + t\varphi(m)\log^3 N) = O\left(\varphi(m)\log^4 N + \frac{N^{1/2} \log^2 N}{(mn)^{3/2}}\right).
$$
The time complexity of the remaining steps is negligible compared to this dominant term, analogous to the analysis in Proposition \ref{pro:main search}, which completes our proof.
\end{proof}

Now we revisit Theorem 2.8 in \cite{hittmeir2017deterministic} with our three dimensional-lattice technique. 
\begin{theorem}
\label{thm:N1/5with-n}
{
Given an integer $N \ge 2$ and integers $n \ge 1$ and $r$ such that $p \equiv r \pmod n$ for every prime divisor $p$ of $N$, one can compute the prime factorization of $N$ in
\[
O\!\left(\frac{N^{1/5}\log^{16/5}N}{n^{3/5}}+N^{1/6}\log^4 N\right).
\]
}
\end{theorem}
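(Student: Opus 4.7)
The plan is to invoke Algorithm~5.1 (the main search with modulo) with carefully tuned parameters, following the template of the proof of Theorem~1.1. After routine preprocessing (verifying $\gcd(n,N)=1$, testing for prime powers, and reducing to the balanced semiprime case), I would use a variant of Lemma~2.10 that sieves only over primes coprime to $nN$ to produce a squarefree modulus
\[
m \;=\; \Theta\!\left(\frac{N^{1/5}}{n^{3/5}\log^{4/5} N}\right),
\]
together with the precomputed inverses $s=(mn)^{-1}\bmod N$, $m_{\mathrm{inv}}=m^{-1}\bmod n$, and $n_{\mathrm{inv}}=n^{-1}\bmod m$ required by Algorithm~5.1. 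In the main regime $n\lesssim N^{1/8}\log^2 N$, this $m$ satisfies the hypothesis $72<mn<N^{1/4}/2$.

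Next, I would find an element $\alpha\in\mathbb{Z}_N^*$ with $\gcd(\alpha^{(mn)^2 i}-1,N)=1$ for all $1\le i\le k$, where $k=\lceil 4\cdot 3^{5/4}N^{1/2}/(mn)^{3/2}\rceil$. This is a direct adaptation of Algorithm~4.2 with exponent base $(mn)^2$ in place of $m^2$; the engine is Lemma~3.3, which either returns $\alpha$ with $\mathrm{ord}_N(\alpha)>N^{1/4+o(1)}$ or factors $N$ as a by-product. The cost of this stage is
\[
O\!\left(\frac{N^{1/6}\log^2 N}{\sqrt{\log\log N}}\right)+O\bigl(\sqrt{k}\,\log^3 N\bigr) \;=\; O\bigl(N^{1/6}\log^4 N\bigr),
\]
contributing the additive $N^{1/6}\log^4 N$ in the claimed bound. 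Feeding $N$, $m$, $s$, and $\alpha$ into Algorithm~5.1 and applying Proposition~5.1, the search cost is $O\!\bigl(\varphi(m)\log^4 N + N^{1/2}\log^2 N/(mn)^{3/2}\bigr)$, and substituting the chosen $m$ makes both terms $\Theta(N^{1/5}\log^{16/5}N/n^{3/5})$, yielding the first term of the theorem.

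The main obstacle is accommodating values of $n$ outside the regime $n\le N^{1/8}\log^2 N$, where the optimal $m$ would violate the admissibility constraint $mn<N^{1/4}/2$. For $N^{1/8}\log^2 N\le n<N^{1/4}/2$, I would cap $m$ at $\lfloor N^{1/4}/(2n)\rfloor$; then Proposition~5.1 gives search cost $O(N^{1/4}\log^4 N/n + N^{1/8}\log^2 N)=O(N^{1/6}\log^4 N)$, which is absorbed by the second term of the claim since $N^{1/5}\log^{16/5}N/n^{3/5}$ is itself dominated by $N^{1/6}\log^4 N$ once $n\ge N^{1/18}$. For $n\ge N^{1/4}$, the hypothesis $p\equiv r\pmod n$ is a hint of relative size $\beta\ge 1/4$ in the sense of Lemma~2.9, applied to the monic polynomial $f(x)=x+rn^{-1}\bmod N$, so $N$ factors in polynomial time, comfortably within the budget. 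The only delicate point is matching the exponents across these sub-regimes; once this is done, the bounds in every case aggregate to the stated complexity.
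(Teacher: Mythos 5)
Your proposal follows essentially the same route as the paper: reduce to a balanced semiprime, choose a modulus $m$ of size $\Theta\!\left(N^{1/5}/(n^{3/5}\log^{4/5}N)\right)$ coprime to $nN$, obtain $\alpha$ via Algorithm~4.2 with base $mn$, and run Algorithm~5.1, balancing $\varphi(m)\log^4 N$ against $N^{1/2}\log^2 N/(mn)^{3/2}$. The one substantive divergence is how the two of you dispose of large $n$, and here your version is actually the more careful one. The paper simply observes that for $n\ge N^{1/4}$ Corollary~3.2 finishes in polynomial time and for $n<N^{1/4}$ it plugs in $m=\lceil N^{1/5}/(\log^{4/5}N\cdot n^{3/5})\rceil$, asserting that Algorithm~5.1's hypotheses hold. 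But Algorithm~5.1 requires $mn<N^{1/4}/2$, and with the paper's $m$ one has $mn=\Theta\!\left(N^{1/5}n^{2/5}/\log^{4/5}N\right)$, which exceeds $N^{1/4}/2$ as soon as $n\gtrsim N^{1/8}\log^2 N$ (the paper itself notes $mn$ can reach $N^{3/10}$ when estimating the cost of factoring $mn$). Your insertion of the intermediate regime $N^{1/8}\log^2 N\lesssim n<N^{1/4}$ with $m$ capped at $\lfloor N^{1/4}/(2n)\rfloor$ cleanly repairs this, and your verification that the resulting search cost $O(N^{1/8}\log^2 N)$ is dominated by the $N^{1/6}\log^4 N$ term is correct. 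Two small points worth flagging: first, your $m$-construction via a Lemma~2.10 variant restricted to primes coprime to $nN$ would actually yield a $\log\log$ speedup that the theorem does not claim (the paper's proof merely increments $m$ until $\gcd(m,n)=1$, bounding the number of trials with Iwaniec's $j(n)=O(\log^2 n)$), and it needs a word of care when $n$ has $\Theta(\log N/\log\log N)$ small prime factors; second, the additive $N^{1/6}\log^4 N$ in the statement comes from the repeated removal of factors $\le N^{1/3}$ when reducing to the semiprime case, not from the $\alpha$-finding step as you attribute it, though both are within budget.
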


\begin{proof}
We first note that if $N$ is prime, we can identify this using the AKS algorithm in polynomial time.

If $N$ has three or more prime factors (counting repetitions), then at least one factor is bounded above by $N^{1/3}$, and such a factor may be found in time $O(N^{1/6}\log^3N)$ (see \cite[Prop.~2.5]{harvey2021exponent}). By repeating this process at most $\log N$ times, we reduce to the case where $N$ is a semiprime with $p,q>N^{1/3}$. The cost of this reduction step is at most 
$$
O\left(N^{1/6}\log^4 N\right).
$$

{
If $\gcd(n,N)>1$, let $p_0 \mid \gcd(n,N)$ be a prime. By the hypothesis that $p \equiv r \pmod n$ for every prime $p \mid N$, taking $p=p_0$ gives $p_0 \equiv r \pmod n$. Since $p_0 \mid n$, reducing modulo $p_0$ yields $r \equiv 0 \pmod{p_0}$. Hence for any prime $p \mid N$ we have $p \equiv r \equiv 0 \pmod{p_0}$, so $p=p_0$ and therefore $N$ is a power of $p_0$. In this case we can factor $N$ in polynomial time. Thus we may assume $\gcd(n,N)=1$ in what follows.
}
Then we assume $n<N^{1/4}$; otherwise, we could apply Corollary \ref{coro:pqmodm} to factor $N$ in polynomial time.

Next, we determine the parameter $m$. We begin by computing $m=\left\lceil \frac{N^{1/5}}{\log^{4/5}N\cdot n^{3/5}}\right\rceil$. If $\gcd(m,n)>1$, we set $m=m+1$ and iterate until we find a value such that $\gcd(m,n)=1$. We claim that this process examines at most $O(\log^2n)$ values of $m$. This follows from a result by Iwaniec \cite{iwaniec1978problem}, which states that $j(n)=O(\log^2n)$, where $j(n)$ is defined as the smallest positive integer $y$ such that every sequence of $y$ consecutive integers contains an integer coprime to $n$. Since $\log^2n<\log^2N\ll \frac{N^{1/5}}{\log^{4/5}N\cdot n^{3/5}}$, the value of $m$ we obtain satisfies 
$$
m=\Theta\left(\frac{N^{1/5}}{\log^{4/5}N\cdot n^{3/5}}\right), \quad \gcd(m,n)=1,
$$
and the computational complexity of this step is only polynomial in $\log N$, which is negligible in the overall complexity analysis.

We proceed to compute $s=(mn)^{-1}\pmod{N}$, $m_{inv}=m^{-1} \pmod n$, $n_{inv}=n^{-1}\pmod m$, and the factorization of $mn$. These computations can be performed in polynomial time, and the factorization costs at most $(mn)^{1/4}=O((N^{3/10})^{1/4})=O(N^{3/40})$, which is also negligible.

Let
$$
k \coloneqq \left\lceil \frac{4\cdot3^{5/4}N^{1/2}}{(mn)^{3/2}}\right\rceil=\Theta\left(\frac{N^{1/5}\log^{6/5}N}{n^{3/5}}\right).
$$

We apply Algorithm \ref{alg:findalpha-bsgs} with parameters $N$, $k$, and $mn$, which either yields $p$ and $q$ directly or provides an element $\alpha \in \mathbb{Z}_N^*$ such that for all $i \in [k]$, $\gcd(\alpha^{(mn)^2i} - 1, N) = 1$. This step incurs a cost of
$$
O\left(\frac{N^{1/6}\log^2 N}{(\log\log N)^{1/2}} + {\sqrt{k}\,\log^3 N}\right),
$$
which is negligible in the overall complexity.

Finally, we employ Algorithm \ref{alg:searchwithmod} with the aforementioned parameters to factor the semiprime $N$. We can verify that all conditions required by Algorithm \ref{alg:searchwithmod} are satisfied. The complexity of this algorithm is
$$
O\left(\varphi(m)\log^4 N + \frac{N^{1/2} \log^2 N}{(mn)^{3/2}}\right)=O\left(\frac{N^{1/5}\log^{16/5}N}{ n^{3/5}}\right).
$$

Combining all the above analyses, the total complexity of the algorithm is 
$$
O\left(\frac{N^{1/5}\log^{16/5}N}{n^{3/5}}+N^{1/6}\log^4 N\right).
$$
\end{proof}

\begin{remark}
When $n$ is not a product of small primes, by selecting an appropriate $m$ as a product of small primes, we can obtain the same loglog speedup as Algorithm \ref{alg:factor}, yielding an improved complexity of $O\left(\frac{N^{1/5}\log^{16/5}N}{(n\log\log N)^{3/5}}\right)$ in the first term of Theorem \ref{thm:N1/5with-n}.
\end{remark}

\begin{remark}
{
Write $m = N^a$. Ignoring $N^{o(1)}$ factors, Theorem \ref{thm:prqfactorsmall} runs in time $N^{1/4 - a}$, while Theorem \ref{thm:N1/5with-n} runs in $\max\{N^{1/6},\, N^{1/5 - 3a/5}\}$.  Under this bound, the crossover with $N^{1/4 - a}$ occurs at $a = 1/12$: Theorem \ref{thm:N1/5with-n} is superior for $a < 1/12$ (equal at $a = 1/12$), and Theorem \ref{thm:prqfactorsmall} dominates for $a > 1/12$. Note also the modeling difference: Theorem \ref{thm:prqfactorsmall} allows isolating only those prime factors $p \equiv r \pmod m$ and assumes $\gcd(m, N) = 1$, whereas Theorem \ref{thm:N1/5with-n} assumes that every prime factor $p$ of $N$ satisfies $p\equiv r \pmod m$; see Figure~\ref{fig:comparison}.
}
\begin{figure}[htbp]
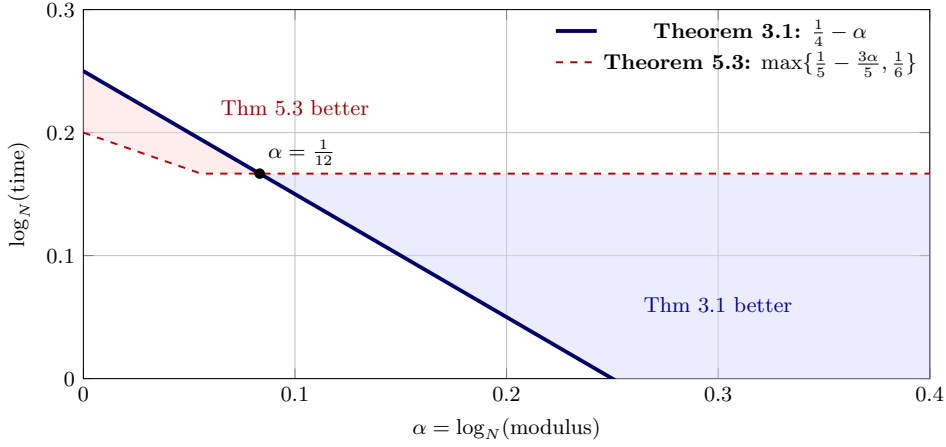

  \centering
  \includestandalone[width=\textwidth]{tex_pics/comparison}
  \caption{Comparison of Theorem 3.1 and Theorem 5.3}
  \label{fig:comparison}
\end{figure}

\end{remark}

\subsection{Factoring Sums and Differences of Powers}
While maintaining the original algorithmic framework proposed by Hittmeir (Algorithm 3.1 in \cite{hittmeir2017deterministic}), we present an improved version of his Theorem 2.8 as Theorem \ref{thm:N1/5with-n} for $n=\Theta(\log N)$, which yields better bounds in the final results.

\begin{algorithm}[Factoring sums and differences of powers]\ \\
\label{alg:factoranbn}
\textit{Input:}
\begin{inputoutputlist}
\item Coprime integers $a, b \in \mathbb{N}$ with $a > b$.
\item A number $N \in P_{a,b}$ where either $N = a^n + b^n \in P_{a,b}^+$ or $N = a^n - b^n \in P_{a,b}^-$ for some $n \in \mathbb{N}$.
\end{inputoutputlist}
\textit{Output:}
\begin{inputoutputlist}
\item The prime factorization of $N$.
\end{inputoutputlist}
\begin{algorithmic}[1]
\State Set $N_1 := N, v := 1$.
\State Apply trial division to compute all divisors of $n$.
\If{$N \in P_{a,b}^+$}
    \State Define $\mathcal{D} := \{2d: d \mid n\}$.
\Else
    \State Define $\mathcal{D} := \{d: d \mid n\}$.
\EndIf
\State Let $d_1 < d_2 < \cdots < d_l$ be the ordered list of all elements in $\mathcal{D}$ where $l \geq 2$.
\While{$N_j \neq 1$}
    \State Set $j = v$.
    \State Compute $G_j = \gcd((ab^{-1})^{d_j} - 1 \bmod N_j, N_j)$.
    \If{$G_j = 1$}
        \State Set $N_{j+1} = N_j$.
    \ElsIf{$1 < G_j \leq N$}
        \State Apply Theorem \ref{thm:N1/5with-n} with $n = d_j$ and $r = 1$ to compute prime factorization of $G_j$.
        \State Remove all prime factors dividing $G_j$ from $N_j$ to obtain $N_{j+1}$.
    \EndIf
    \State Set $v = j+1$.
\EndWhile
\State Return the prime factorization of $N$.
\end{algorithmic}
\end{algorithm}

\begin{proposition}
Algorithm \ref{alg:factoranbn} is correct and it runs in time
\[
O\left(N^{1/5} \log^{13/5} N\right).
\]    
\end{proposition}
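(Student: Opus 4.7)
The plan is to establish correctness by showing that each prime divisor of $N$ is isolated in exactly one iteration of the loop, and then to bound the total running time by invoking Theorem~\ref{thm:N1/5with-n} on each factor $G_j$ with a tight size bound; the final key observation is that the sum of costs across divisors $d_j$ is geometrically dominated by the single term at the largest divisor.

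\textbf{Correctness.} For each prime $p\mid N$, I set $d_p:=\ord_p(ab^{-1})$, which is well defined because $\gcd(a,b)=1$ forces $\gcd(b,N)=1$. If $N=a^n-b^n$, then $(ab^{-1})^n\equiv 1\pmod p$ yields $d_p\mid n$, so $d_p\in\mathcal{D}$; if $N=a^n+b^n$, then $(ab^{-1})^n\equiv -1\pmod p$ forces $v_2(d_p)=v_2(n)+1$, whence $d_p=2\tilde d$ with $\tilde d\mid n$, so again $d_p\in\mathcal{D}$. Scanning the $d_j\in\mathcal{D}$ in increasing order, the primes stripped from $N$ before step $j$ are precisely those with $d_p<d_j$, so $G_j=\gcd((ab^{-1})^{d_j}-1,N_j)$ isolates exactly the primes with $d_p=d_j$. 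Each such prime satisfies $d_j\mid p-1$, so the invocation of Theorem~\ref{thm:N1/5with-n} on $G_j$ with $n\leftarrow d_j$ and $r\leftarrow 1$ is legitimate and returns the prime factorization of $G_j$.

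\textbf{Running time.} Every prime $p\mid G_j$ divides $a^{d_j}-b^{d_j}$, giving the size bound $G_j\leq\min(N,\,a^{d_j})$. Since $a,b$ are fixed constants with $a>b\geq 1$, we have $N=\Theta(a^n)$ and $\log N=\Theta(n)$. Invoking Theorem~\ref{thm:N1/5with-n} at level $d_j$ costs
\[
O\!\left(\frac{a^{d_j/5}\log^{16/5}N}{d_j^{3/5}}+N^{1/6}\log^4 N\right),
\]
which at $d_j=\max\mathcal{D}\in\{n,2n\}$ is $O(N^{1/5}\log^{16/5}N/n^{3/5})=O(N^{1/5}\log^{13/5}N)$. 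Every other $d_j\in\mathcal{D}$ satisfies $d_j\leq\max\mathcal{D}/2$, because the second-largest divisor of $n$ is at most $n/p_{\min}(n)\leq n/2$ (and the analogous statement holds for $\mathcal{D}=\{2d:d\mid n\}$). Since $a>1$ is a fixed constant, $a^{d_j/5}$ decays geometrically as $d_j$ shrinks, so
\[
\sum_{d_j\in\mathcal{D}}\frac{a^{d_j/5}}{d_j^{3/5}}=(1+o(1))\cdot\frac{a^{(\max\mathcal{D})/5}}{(\max\mathcal{D})^{3/5}}=O\!\left(\frac{N^{1/5}}{\log^{3/5}N}\right).
\]
Summing over $\tau(2n)=O(\log^{\varepsilon}N)$ iterations absorbs the $N^{1/6}\log^4 N$ remainders, and auxiliary costs—trial division on $n=O(\log N)$, and the $O(\log^3 N)$ per-iteration modular exponentiation and gcd—are negligible. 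This yields the total bound $O(N^{1/5}\log^{13/5}N)$.

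\textbf{Main obstacle.} The most delicate point is making rigorous the geometric-decay argument: one must check uniformly over both the sum and difference cases that, apart from $\max\mathcal{D}$, every element of $\mathcal{D}$ is at most half of it, and that the multiplicative factor $a^{-(\max\mathcal{D}-d_j)/5}$ dominates the polylogarithmic number of remaining terms. Both follow from elementary arithmetic: any proper divisor of $n$ is at most $n/p_{\min}(n)\leq n/2$, and $a^{\Theta(\log N)/10}=N^{\Theta(1)}$ easily dwarfs $\tau(2n)=O(\log^{\varepsilon}N)$.
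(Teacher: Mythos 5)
Your correctness argument is sound, and the bound $G_j\le\min(N,a^{d_j})$ is a legitimate and self-contained alternative to the paper's route, which instead quotes from Hittmeir the bounds $G_j\le N^{1/2}$ (difference case) and $G_j\le N^{2/3}$ (sum case) for all $j<l$ so that only the $j=l$ term is ever of order $N^{1/5}$.

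However, your complexity argument has a genuine error in the sum case. You correctly record $G_j\le\min(N,a^{d_j})$, but then silently drop the $\min$ inside the sum and assert
\[
\sum_{d_j\in\mathcal{D}}\frac{a^{d_j/5}}{d_j^{3/5}}=(1+o(1))\cdot\frac{a^{(\max\mathcal{D})/5}}{(\max\mathcal{D})^{3/5}}=O\!\left(\frac{N^{1/5}}{\log^{3/5}N}\right).
\]
The first equality (geometric dominance) is fine, but the second is false when $N=a^n+b^n$: there $\max\mathcal{D}=2n$ and $N=\Theta(a^n)$, so
\[
a^{(\max\mathcal{D})/5}=a^{2n/5}=\Theta\!\left(N^{2/5}\right),
\]
which is $N^{1/5}$ too large. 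The same slip already appears where you evaluate the per-iteration cost at $d_j=\max\mathcal{D}$ using $a^{d_j/5}$ rather than $\min(N,a^{d_j})^{1/5}$. The $\min$ matters precisely at the top of $\mathcal{D}$: $d_l=2n$ satisfies $a^{d_l}\gg N$, and when $n$ is even the second-largest element is $d_{l-1}=n$, whose contribution $a^{n/5}/n^{3/5}=\Theta(N^{1/5}/n^{3/5})$ is of the \emph{same} order as the top term rather than geometrically smaller. The geometric-decay regime only begins at $d_{l-2}\le 2n/3$. The argument is repairable — keep $\min(N,a^{d_j})^{1/5}$ throughout, observe that at most two indices hit the $N^{1/5}$ cap and each costs $O(N^{1/5}\log^{13/5}N)$, and apply your geometric estimate only to the remaining $d_j\le 2n/3$ — but as written the key displayed bound does not hold.
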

\begin{proof}
The correctness of Algorithm \ref{alg:factoranbn} has already been proved in proof of Theorem 1.1 \cite{hittmeir2017deterministic}. We now focus on the time complexity.

Following the analysis in \cite{hittmeir2017deterministic}, we have $n = O(\log N)$, and for $j < l$, we obtain $G_j \leq N^{1/2}$ when $N \in P_{a,b}^-$ and $G_j \leq N^{2/3}$ when $N \in P_{a,b}^+$. For $j = l$, note that $n = \Theta(\log N)$. In the worst case where $G_l = N$, applying Theorem \ref{thm:N1/5with-n} yields a runtime of 
$$
O\left(\frac{N^{1/5}\log^{16/5}N}{n^{3/5}}+N^{1/6}\log^4 N\right)=O(N^{1/5}\log^{13/5}N),
$$
which dominates all other operations and gives the claimed complexity bound. This also proves Theorem \ref{thm:anbn}.
\end{proof}
\begin{remark}
For the case where $j = l$, we have $n = \Theta(\log N)$. In this scenario, one can achieve a logarithmic speedup by carefully selecting {small primes} that are coprime to $n$, thus obtaining a {modulus} $m$ that satisfies both $\gcd(m,n)=1$ and $\varphi(m)/m=\Theta(1/\log\log N)$. This leads to the complexity bound $O\left(\frac{N^{1/5}\log^{13/5}N}{(\log\log N)^{3/5}}\right)$. 

Notably, this matches the complexity in Theorem \ref{thm:main} for balanced semiprimes, where the bit-length information of $p$ saves a factor of $\log N$ in the exhaustive search, while here the additional information from $n = \Theta(\log N)$ combined with the $\log\log$ speedup achieves the same effect.
\end{remark}

\section{Speedup for $p^rq$}
\label{sec:prq}
Finally, we generalize our rank-3 lattice construction to solve the $r$-power divisor problem. This problem—finding all integers $p$ such that $p^r \mid N$—has recently seen notable progress in the development of provably deterministic algorithms. Hales and Hiary~\cite{hales2024generalization} extended Lehman's method~\cite{lehman1974factoring} and obtained two algorithms with complexities 
\[
O\left(N^{1/(r+2)} (\log N)^2 \log \log N\right) \quad \text{and} \quad O\left(N^{1/(3+2r)} (\log N)^{16/5}\right).
\]

Around the same time, Harvey and Hittmeir~\cite{harvey2022deterministic} (Proceedings of ANTS XV, \emph{Res. Number Theory} 8 (2022), no.~4, Paper No.~94) applied Coppersmith's method directly and achieved a complexity of 
\[
O\left( \frac{N^{1/4r} \log^{10+\epsilon} N}{r^3} \right).
\]
By incorporating faster LLL-type lattice reduction algorithms and sieving on small primes, we improve this to 
\[
O\left( \frac{N^{1/4r} \log^{7+3\epsilon} N}{(\log\log N-\log 4r)r^{2+\epsilon}} \right).
\]

According to Remark~3.5 in~\cite{harvey2022deterministic}, the worst-case running time of their algorithm occurs when $p=\Theta(N^{1/2r})$ and $q =\Theta(N^{1/2})$, that is, when $N = p^r q$. By focusing on this case and employing our rank-3 lattice construction, we further reduce the complexity to
\[
O\left({r^{1/4}} N^{1/4r} \log^{5/2} N \right).
\]

\subsection{More Refined Complexity Analysis of~\cite{harvey2022deterministic}}

Set $m=1$ in Theorem \ref{thm:prqfactorsmall}, we first demonstrate how the complexity bound in~\cite{harvey2022deterministic} can be improved from 
\[
O\left( \frac{N^{1/4r} \log^{10+\epsilon} N}{r^3} \right)
\quad \text{to} \quad
O\left( \frac{N^{1/4r} \log^{7+3\epsilon} N}{r^{2+\epsilon}} \right).
\]
\begin{theorem}
\label{thm:prq7+epsilon}
There exists an explicit deterministic algorithm with the following properties. Given as input an integer $N > 2$ and a positive integer $r$, the algorithm outputs a list of all positive integers $p$ such that $p^r \mid N$. Its running time is
\[
O\left( \frac{N^{1/4r} \log^{7+3\epsilon} N}{r^{2+\epsilon}} \right).
\]
\end{theorem}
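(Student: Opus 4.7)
The plan is to specialize Theorem~\ref{thm:prqfactorsmall} at $m = 1$. The modular condition $p \equiv s \pmod{1}$ is satisfied by every integer, so the algorithm of that theorem in this case outputs exactly the list of all positive integers $p$ with $p^r \mid N$, which is precisely the task demanded by Theorem~\ref{thm:prq7+epsilon}.

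\textbf{Key steps.} First I would verify that $m = 1$ is an admissible input: clearly $1 \in \mathbb{Z}_N^*$, and the size condition $s, m < N$ is met for any $s < N$ (e.g.\ $s = 0$) whenever $N > 2$. Second, I would invoke Theorem~\ref{thm:prqfactorsmall} to produce the required list of $r$-power divisors. Finally, I would substitute $m = 1$ into the stated running-time bound
\[
O\!\left(\left\lceil\frac{N^{1/4r}}{m}\right\rceil \cdot \frac{\log^{7+3\epsilon} N}{r^{2+\epsilon}}\right),
\]
which collapses to
\[
O\!\left(\frac{N^{1/4r}\log^{7+3\epsilon} N}{r^{2+\epsilon}}\right),
\]
the complexity claimed in Theorem~\ref{thm:prq7+epsilon}.

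\textbf{Main obstacle.} At the level of Theorem~\ref{thm:prq7+epsilon} itself there is essentially no obstacle, since all of the substantive content has been bundled into Theorem~\ref{thm:prqfactorsmall} (whose proof is deferred to Appendix~\ref{sec:appendixProofprq}). The improvement over the Harvey--Hittmeir bound $O(N^{1/4r}\log^{10+\epsilon} N / r^3)$ is inherited from two ingredients of that appendix: replacing the classical LLL complexity of Lemma~\ref{lem:lll} by the Neumaier--Stehl\'e variant of Remark~\ref{rem:fasterlll}, which is permissible here because Howgrave-Graham's criterion (Lemma~\ref{lem:HG}) only requires the first reduced vector; and propagating this saving through a Coppersmith lattice of dimension $\Theta(r \log N)$ with entries of bit-size $O(\log N)$. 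The only genuinely delicate point I expect in the appendix proof is verifying that the exponents flowing out of the Neumaier--Stehl\'e bound $O(n^{4+\epsilon}\beta^{1+\epsilon})$ balance to give exactly $7+3\epsilon$ in $\log N$ and $2+\epsilon$ in $r$; this is a mechanical, if slightly tedious, bookkeeping check.
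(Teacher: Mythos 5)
Your proposal matches the paper's proof exactly: specialize Theorem~\ref{thm:prqfactorsmall} to $s=0$, $m=1$, note that the congruence $p\equiv 0\pmod 1$ is vacuous, and substitute into the running-time bound, where $\lceil N^{1/4r}/m\rceil = \lceil N^{1/4r}\rceil$. Your sanity checks ($1\in\mathbb{Z}_N^*$, $s,m<N$) and your remarks on where the improvement over Harvey--Hittmeir actually originates (Neumaier--Stehl\'e reduction and the sharper bookkeeping in Appendix~\ref{sec:appendixProofprq}) are all consistent with the paper.
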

\begin{proof}
Set $s=0$ and $m=1$ in Theorem~\ref{thm:prqfactorsmall}.
\end{proof}
\begin{remark}
We emphasize that this improvement does not arise from a novel algorithmic idea, but rather from a more refined complexity analysis combined with the use of alternative lattice basis reduction algorithms~\cite{NS2016fasterlll}.
\end{remark}

We could also get a faster algorithm using the same idea of sieving on the small primes.
\begin{theorem}
\label{thm:prq7+epsilonwithspeedup}
Let $N$ be a natural number, one can compute all primes $p$ such that $p^r | N$ in time
$$
F(N)=O\left(\frac{ N^{1/4r}} {\log\log N-\log 4r} \frac{\log^{7+3\epsilon} N}{r^{2+\epsilon}} \right).
$$

\end{theorem}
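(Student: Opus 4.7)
The plan is to reduce Theorem~\ref{thm:prq7+epsilonwithspeedup} to Theorem~\ref{thm:prqfactorsmall} by scanning only residues coprime to a well chosen product $m$ of small primes, mirroring the loglog speedup already used in Algorithm~\ref{alg:factor}. The target is to choose $m\simeq N^{1/4r}$ so that each invocation of Theorem~\ref{thm:prqfactorsmall} runs in time $O(\log^{7+3\epsilon}N/r^{2+\epsilon})$ (its $\lceil N^{1/4r}/m\rceil$ factor becomes $O(1)$), and then to iterate only over $\varphi(m)$ residues $s\in(\mathbb{Z}/m\mathbb{Z})^{\times}$ rather than all $m$ of them.

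First, I would apply Lemma~\ref{lem:Prime Product Construction} with input $x=\lceil N^{1/4r}\rceil$ to obtain a squarefree product $m=\prod_{p_i\in S}p_i$ of distinct small primes with $N^{1/4r}/2<m<2N^{1/4r}$ and $\varphi(m)/m=\Theta(1/\log\log m)$. Next, compute $\gcd(m,N)$: whenever it is nontrivial, each prime $p_i\in S$ dividing $N$ can be tested in polynomial time to see whether $p_i^{r}\mid N$ and then removed, so we may assume $\gcd(m,N)=1$ as required by the hypothesis of Theorem~\ref{thm:prqfactorsmall}. Any prime $p$ with $p^{r}\mid N$ either lies in $S$ (already treated) or is coprime to $m$, and in the latter case satisfies $p\equiv s\pmod m$ for a unique $s\in(\mathbb{Z}/m\mathbb{Z})^{\times}$. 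The main loop then runs over every such $s$ and calls Theorem~\ref{thm:prqfactorsmall} with modulus $m$ and residue $s$ to produce the complete list of the remaining $r$-power divisors.

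The cost is dominated by the $\varphi(m)$ invocations of Theorem~\ref{thm:prqfactorsmall}, each of cost $O(\log^{7+3\epsilon}N/r^{2+\epsilon})$ since $m\ge N^{1/4r}/2$ forces $\lceil N^{1/4r}/m\rceil\le 2$. The only quantitative point to track carefully is $\log\log m$: because $m=\Theta(N^{1/4r})$,
\[
\log\log m=\log\!\bigl(\tfrac{\log N}{4r}+O(1)\bigr)=\log\log N-\log(4r)+O(1),
\]
which is meaningful exactly in the regime $4r=o(\log N)$ (otherwise $N^{1/4r}=O(1)$ and trivial trial division already suffices). Consequently $\varphi(m)=\Theta\!\bigl(N^{1/4r}/(\log\log N-\log 4r)\bigr)$, and summing gives the claimed bound
\[
O\!\left(\frac{N^{1/4r}}{\log\log N-\log(4r)}\cdot\frac{\log^{7+3\epsilon}N}{r^{2+\epsilon}}\right).
\]
The main obstacle is to verify that $\log\log m$ really contributes the promised $\log\log N-\log 4r$ denominator uniformly in $r$, together with the minor bookkeeping around $\gcd(m,N)\neq 1$ and the primes of $m$ themselves; all other steps are direct applications of the previously established lemmas.
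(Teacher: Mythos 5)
Your proposal is correct and follows essentially the same route as the paper: construct $m\approx N^{1/4r}$ via Lemma~\ref{lem:Prime Product Construction}, handle the primes in $S$ dividing $N$ separately, loop over the $\varphi(m)$ coprime residues applying Theorem~\ref{thm:prqfactorsmall} with $\lceil N^{1/4r}/m\rceil=O(1)$, and account for the $\log\log m=\log\log N-\log(4r)+O(1)$ savings in $\varphi(m)$. Your explicit verification of the $\log\log m$ term is a nice touch that the paper leaves implicit.
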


\begin{proof}
Set $x=\lceil N^{1/4r}\rceil$ and apply Lemma \ref{lem:Prime Product Construction} to obtain $m$ satisfying
$$
m = \prod_{p \in S} p,\quad \frac{x}{2} < m < 2x, \quad \frac{\varphi(m)}{m} = \Theta\left( \frac{1}{\log\log x} \right)
$$
This step requires only polynomial time in $\log N$.

We first compute $G=\gcd(m,N)$, then identify all primes $p \in S$ such that $p | G$, and verify for each whether $p^r | N$. We remove all prime factors $p$ of $G$ from $N$ to obtain $N_1$.

Since $|S| = O(\log N)$, this step also requires only polynomial time in $\log N$.

Next, we iterate through all elements $i \in \mathbb{Z}_m^*$. For each $i$, we apply Theorem \ref{thm:prqfactorsmall} with parameters $N_1, r, s=i$, and $m$ to find all primes $p$ satisfying $p \equiv i \pmod{m}$ and $p^r | N_1$. We claim this will identify all primes $p$ such that $p^r | N_1$. This is because if a prime $p$ divides $N_1$, then $\gcd(p,m) = 1$, which implies $(p \bmod m) \in \mathbb{Z}_m^*$. Therefore, when $i = (p \bmod m)$, this prime $p$ will be output by Theorem \ref{thm:prqfactorsmall}. Combined with our earlier discussion, we prove that we output all primes $p$ such that $p^r | N$.

The time complexity for each $i$ is
$$
O\left(\left\lceil\frac{N_1^{1/4r}}{m}\right\rceil \frac{\log^{7+3\epsilon} N}{r^{2+\epsilon}} \right) = O\left(\frac{\log^{7+3\epsilon} N}{r^{2+\epsilon}} \right)
$$
Since there are $\varphi(m) = O(m/\log\log x) = O(N^{1/4r}/(\log\log N - \log 4r))$ values of $i$, the total time complexity is
$$
O\left(\frac{N^{1/4r}}{\log\log N - \log 4r} \cdot \frac{\log^{7+3\epsilon} N}{r^{2+\epsilon}} \right)
$$
\end{proof}
\begin{remark}
Compared to Theorem~\ref{thm:prq7+epsilon}, Theorem~\ref{thm:prq7+epsilonwithspeedup} does not always yield a genuine $\log \log N$-speedup. When $r = O(1)$, the improvement $\log \log N - \log 4r$ can indeed be interpreted as a $\log \log N$-speedup. However, in the case where $r = \Theta(\log N / \log \log N)$, the difference becomes $\Theta(\log \log \log N)$, which is asymptotically weaker. A similar issue arises in Proposition~4.4 of~\cite{hales2024generalization}, where the authors claim that “a more sophisticated choice of $m$ can give a $\log \log N$ speedup.” This assertion is not entirely accurate in general.

\end{remark}

\subsection{Factor $N = p^r q$ with $q=\Theta(N^{1/2})$}

Next, we focus on this specific scenario and demonstrate how to use our rank-3 lattice to solve the $r$-power divisor problem. For $r>\frac{\log N}{32\log\log N}$, then $p<\log^{1/64}N$, which means we can find $p$ in $\text{poly}(\log N)$ time by enumeration. Then we consider $r<\frac{\log N}{32\log\log N}$.

We now present the main search algorithm:

\begin{algorithm}[The main search for $N=p^r q$]\ \\   
\label{alg:searchprq}
\textit{Input:}
\begin{inputoutputlist}
\item An integer $N=p^{r}q$ with {constants $0<c_{1}<c_{2}$ such that $c_{1}N^{1/2}<q<c_{2}N^{1/2}$}.
\item Positive integer {$m<p/r$ and set $X\coloneqq m$}.
\item An element $\alpha\in\Z_N^*$ whose order larger than {$k \coloneqq \left\lceil 2em\sqrt{r}\right\rceil.$} 
\end{inputoutputlist}
\textit{Output:}
$p$ and $q$.
\begin{algorithmic}[1]
\For{$i=0,\dots,k-1$}  \Comment{Computation of babysteps}\label{step:babystepprq}
\State Compute $\alpha^{i} \pmod N$ and $\gcd(\alpha^{i}-1,N)$.
\If{$\gcd(\alpha^{i}-1,N)>1$}
\State Compute $p$ and $q$ and return $p$ and $q$.
\EndIf
\EndFor

\For{$j=0, \ldots, \left\lfloor\frac{N^{1/2r}}{X\cdot (c_1)^{1/r}} \right\rfloor$} \Comment{Computation of giantsteps}\label{step:giantstepexhaustprq}
    \State $M_j \coloneqq m\,j$, and {define $f_j(x)\coloneqq x+M_j$}. \label{step:fj-def}
    \State \label{step:lattice-constructionprq}
    {
    Construct the $3$-rank lattice with basis $B_j\in\mathbb{Z}^{3\times (r+2)}$, where the $i$-th column is the coefficient of $x^{i}$ in the polynomials $N,f^r_j(xX),f^{r+1}_j(xX)$ :
    \begin{equation*}
    \label{eq:latticeprq}
    B_j=\begin{pmatrix}
        N & 0 & 0 & \cdots & 0 & 0\\
        \binom{r}{0}M_j^{r} & \binom{r}{1}M_j^{r-1}X & \binom{r}{2}M_j^{r-2}X^2& \cdots  & \binom{r}{r}X^{r} & 0\\
        \binom{r+1}{0}M_j^{r+1} & \binom{r+1}{1}M_j^{r}X & \binom{r+1}{2}M_j^{r-1}X^2 & \cdots & \binom{r+1}{r}M_j X^{r} & \binom{r+1}{r+1}X^{r+1}
        \end{pmatrix}.
    \end{equation*}
    \State \label{step:vectorprq}
    Define the difference vector:
    \begin{align*}
    v_j &=(v_{j}^{0},\,v_{j}^{1}X,\,\dots,\,v_{j}^{r+1}X^{r+1})\\
    &:= \big(0,\ \tbinom{r}{0}M_j^{r}X,\ \tbinom{r}{1}M_j^{r-1}X^2,\ \dots,\ \tbinom{r}{r-1}M_j X^{r},\ \tbinom{r}{r}X^{r+1}\big).
    \end{align*}}
    \State 
    Let the associated polynomial be
    \[
      g_{j}(x)=v_{j}^{0}+v_{j}^{1}x+\cdots+v_{j}^{r+1}x^{r+1}\in\mathbb{Z}[x].
    \]
    \State \label{step:giantstepprq}
    Compute the giantstep
    \[
      x_{j}\coloneqq \alpha^{\,g_{j}(1-M_j)} \pmod N,
    \]
    and store the pair $(x_j,\,j)$.
\EndFor

\State \label{step:matchprq}%
Applying a sort-and-match algorithm to the babysteps and giantsteps
computed in Steps \ref{step:babystepprq} and \ref{step:giantstepprq},
find all pairs $(i, j)$ such that
\begin{equation*}
\label{eq:modn-msb}
\alpha^{\,i} \equiv x_{j} \pmod N.
\end{equation*}
For each such match, solve the {degree-$(r+1)$} equation in $p$:
\[
\sum_{t=0}^{r+1} v_{j}^{t}\left(p-M_j\right)^{t} \;=\; i\,p^{\,r}.
\]
If an integer solution $p$ is found, return $p$ and $q$.

\State \label{step:collisionsprq}%
Let $x_{1},\ldots, x_{n}$ be the list of giantsteps computed in Step \ref{step:giantstepprq},
skipping those that were discovered in Step \ref{step:matchprq}
to be equal to one of the babysteps.
Apply Algorithm \ref{alg:collisions} (finding collisions)
with $N$, $\kappa \coloneqq k$, \(\gamma \coloneqq \alpha \pmod N\) and $x_{1},\ldots, x_{n}$ as inputs.

\State \label{step:mainreturnprq}%
Return $p$ and $q$.
\end{algorithmic}
\end{algorithm}

Now we will prove the following proposition of our main search.

\begin{proposition}
\label{pro:prqmain search}
Algorithm \ref{alg:searchprq} is correct. It runs in time
\begin{equation*}
\label{eq:prqsearch}
O\left({\sqrt{r}\,m\log^2N+\frac{N^{1/2r}}{m}\log^3N}\right).
\end{equation*}
\end{proposition}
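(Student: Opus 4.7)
The plan is to follow the template of Section~\ref{sec:factor}: for the true coset index $j_{0}$ with $M_{j_{0}}\le p<M_{j_{0}}+m$, I would exhibit a forced collision of the giantstep $x_{j_{0}}$ with one of the babysteps $\alpha^{i}$ modulo $p$, and then bound the running time by tallying the cost of each stage.

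To set up the correctness argument, I would first unwind the ``difference vector''. A direct calculation shows that $v_{j}$ equals the third basis row of $B_{j}$ minus $M_{j}$ times the second, so its associated polynomial is
\[
g_{j}(x)\;=\;x\,f_{j}(x)^{r}\;=\;x\,(x+M_{j})^{r}.
\]
Writing $p_{\mathrm{lsb}}\coloneqq p-M_{j_{0}}\in[0,m)$ for the correct $j_{0}$, this identity gives $g_{j_{0}}(p_{\mathrm{lsb}})=p_{\mathrm{lsb}}\,p^{r}$ and $g_{j_{0}}(1-M_{j_{0}})=1-M_{j_{0}}$. Using $p^{r}\equiv 1\pmod{p-1}$ and $p_{\mathrm{lsb}}\equiv 1-M_{j_{0}}\pmod{p-1}$, Fermat's little theorem yields
\[
\alpha^{\,p_{\mathrm{lsb}}}\;\equiv\;\alpha^{\,g_{j_{0}}(p_{\mathrm{lsb}})}\;\equiv\;\alpha^{\,g_{j_{0}}(1-M_{j_{0}})}\;=\;x_{j_{0}}\pmod{p}.
\]
Since $p_{\mathrm{lsb}}<m\le k$, the babystep $\alpha^{\,p_{\mathrm{lsb}}}$ sits in the precomputed list, forcing a collision modulo $p$. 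I would then split into two cases: if the collision also holds modulo $N$, Step~\ref{step:matchprq} solves $g_{j}(p-M_{j})=ip^{r}$, which algebraically collapses to $p^{r}(p-M_{j}-i)=0$ and hence to the linear identity $p=M_{j}+i$; the candidate is accepted after verifying $p^{r}\mid N$. Otherwise the collision holds only modulo $p$ and Step~\ref{step:collisionsprq} recovers $p$ via Algorithm~\ref{alg:collisions} invoked with $\gamma=\alpha$, $\kappa=k$, and the giantstep list.

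For the complexity bound, I would add up the cost of each block. Each babystep is one modular multiplication beyond the previous, so Step~\ref{step:babystepprq} runs in $O(k\log N\log\log N)=O(\sqrt{r}\,m\log^{2}N)$; the accompanying gcd checks contribute only lower-order terms. Precomputing $\alpha^{-m}\bmod N$ reduces each giantstep to a single modular multiplication, giving $O((N^{1/2r}/m)\log N\log\log N)$ for Step~\ref{step:giantstepexhaustprq}, while building the column-scaled rows of $B_{j}$ is $\mathrm{poly}(\log N)$. Sort-and-match in Step~\ref{step:matchprq} processes $O(k+N^{1/2r}/m)$ residues of $O(\log N)$ bits in $O((k+N^{1/2r}/m)\log^{2}N)$ time, and Proposition~\ref{prop:findCollisions} bounds the call to Algorithm~\ref{alg:collisions} by $O((N^{1/2r}/m)\log^{3}N+k\log^{2}N)$. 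Summing gives $O(\sqrt{r}\,m\log^{2}N+(N^{1/2r}/m)\log^{3}N)$, matching the stated bound.

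The main obstacle is the Fermat bookkeeping. One must check that reducing the inner argument of $g_{j_{0}}$ from $p_{\mathrm{lsb}}$ to its representative $1-M_{j_{0}}$ modulo $p-1$ preserves the exponent modulo $\ord_{p}(\alpha)$, and that the hypothesis $\ord_{N}(\alpha)>k$ guarantees the babysteps are pairwise distinct so that the matching phase identifies the correct $(i,j)$ unambiguously and no spurious match survives the subsequent $p^{r}\mid N$ check. A secondary technical point is confirming that $v_{j}$ is a legitimate $\mathbb{Z}$-combination of the three basis rows of $B_{j}$, which is what makes $g_{j}(p_{\mathrm{lsb}})$ automatically divisible by $p^{r}$ and drives the recovery identity $p=M_{j}+i$.
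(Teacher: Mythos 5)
Your proposal is correct, and it takes a genuinely cleaner route than the paper's proof. The paper treats the associated polynomial $g_j$ as an opaque lattice vector: it bounds $\|v_j\| < eXp^r$ by expanding the binomial coefficients, then invokes Cauchy--Schwarz to conclude the collision index $i$ (defined by $ip^r = g_j(p_{\mathrm{lsb}})$) satisfies $i < 2em\sqrt{r} \le k$. You instead observe the closed form $g_j(x) = x\,f_j(x)^r = x(x+M_j)^r$, which immediately gives $g_j(p_{\mathrm{lsb}}) = p_{\mathrm{lsb}}\,p^r$ and hence $i = p_{\mathrm{lsb}} < m$; this identification is exact, strictly stronger than the norm bound, and bypasses the Cauchy--Schwarz step entirely. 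It also explains why the degree-$(r+1)$ equation in Step~\ref{step:matchprq} collapses to the linear relation $p = M_j + i$ (the paper instead appeals to a generic root-finding subroutine and the nonvanishing leading coefficient $v_j^{r+1}=1$), and why each giantstep $x_j = \alpha^{1-mj}$ is a single modular multiplication along a geometric progression. In fact, the exact bound $i<m$ suggests the baby-step budget $k=\lceil 2em\sqrt{r}\rceil$ in Algorithm~\ref{alg:searchprq} is conservative and could be reduced to about $m$, which would remove the $r^{1/4}$ factor from the final bound of Algorithm~\ref{alg:factorprq}; for the proposition as stated, both routes yield the same estimate since $k$ is fixed in the algorithm. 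One minor inaccuracy: the gcd checks in Step~\ref{step:babystepprq} cost $O\bigl(k\log N(\log\log N)^2\bigr)$, which is not lower order than the $O(k\log N\log\log N)$ multiplications, though both are dominated by $O(\sqrt{r}\,m\log^2 N)$ so the stated bound is unaffected.
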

\begin{proof}
The correctness and most parts of the complexity analysis follow similarly to the proof of Proposition~\ref{pro:main search}. Write
\[
p = m\,p_{\mathrm{msb}} + p_{\mathrm{lsb}}, \qquad 0 \le p_{\mathrm{lsb}} < m=X,
\]
where $p_{\mathrm{msb}}$ and $p_{\mathrm{lsb}}$ denote the most-significant and least-significant parts of $p$ in base $m$. In Step~\ref{step:giantstepexhaustprq} we exhaust all $j\le p/X$, so the correct index $j=p_{\mathrm{msb}}$ will be found.

Fix $j=p_{\mathrm{msb}}$, we then define $f_j(x)\coloneqq x+M_j$, thus $x=p_{\mathrm{lsb}}$ is a small root of $f_j(x) \bmod p$ and $|x|<m=X$ holds.

We then construct the three-dimensional lattice $B_j$ spanned by $N,f^r_j(x),f^{r+1}_j(x)$, thus $x_0=p_{\mathrm{lsb}}$ is a root modulo $p^r$ of every polynomial encoded by a lattice vector, and in particular of $v_{j}=(v_{j}^{0},\,v_{j}^{1}X,\,\dots,\,v_{j}^{r+1}X^{r+1})$. Therefore $p^r|g_j(x_0)$.

Let $i p^r=g_j(x_0)$, we could bound the norm of $v_j$:
\begin{align*}
\|v_j\|
&= \sqrt{\sum_{i=0}^r \left(\binom{r}{i} M_j^i X^{r+1-i} \right)^2} \\
&\leq \sqrt{\left(\sum_{i=0}^r \binom{r}{i} M_j^i X^{r+1-i} \right)^2} \\
&= X(X + M_j)^r \\
&= X p^r \left(1 + \frac{X-p_{\mathrm{lsb}}}{p} \right)^r \\
&<X p^r \left(1 + \frac{1}{r} \right)^r\\
&<e X p^r .
\end{align*}

By the Cauchy–Schwarz inequality,
\begin{equation*}
ip^r=g_j(x_0)\le \sqrt{(r+2)\|v_j\|^2}<2\sqrt{r}eXp^r
\end{equation*}
Then we obtain
\[
i < 2em\sqrt{r}\le k.
\]
Furthermore,
\[
x_0=p_{\mathrm{lsb}}\equiv 1-mp_{\mathrm{msb}}=1-M_j\bmod (p-1).
\]
Hence
\[
\alpha^{i }
\equiv\alpha^{i p^r}
\equiv \alpha^{g_j(x_0)}
\equiv \alpha^{g_j(1-M_j)}
= x_j \pmod{p}.
\]
Therefore there is a collision modulo $p$ between the babysteps computed in Step~\ref{step:babystepprq} and the giantsteps computed in Step~\ref{step:giantstepprq}.

If this collision is also a collision modulo $N$, then Step~\ref{step:matchprq} identifies the pair $(i,j)$ and solves the corresponding equation. We have $v^{r+1}_j=1\neq 0$, hence solving the equation returns $p$.

Otherwise, if the collision is only modulo $p$, then Step~\ref{step:collisionsprq} also recovers $p$. 

{

We now analyze the running time. From Theorem~1.2 in \cite{harvey2022deterministic}, the cost of solving a polynomial $f \in \mathbb{Z}[x]$ of degree $r+1$ is $O(r^{2+\epsilon}\log^{1+\epsilon}(\|f\|_{\infty}))=O(r^{2+\epsilon}\log^{1+\epsilon}(\|v_j\|))=O(r^{2+\epsilon}\log^{1+\epsilon}N)$. Thus, solving the degree-$(r+1)$ equations in Step~\ref{step:matchprq} takes $O\big(k r^{2+\epsilon}\log^{1+\epsilon}N\big)$, which is negligible. As in the proof of Proposition~\ref{pro:main search}, in other steps, the dominant cost comes from applying Algorithm \ref{alg:collisions} (finding collisions) in step \ref{step:collisionsprq}. There are $k$ baby steps and
\[
\left\lfloor\frac{N^{1/(2r)}}{X\,(c_1)^{1/r}}\right\rfloor
= \Theta\!\left(\frac{N^{1/(2r)}}{m}\right)
\]
giant steps. By Proposition~\ref{prop:findCollisions}, the total running time is
\[
O\!\left(k\,\log^2\!N+\frac{N^{1/(2r)}}{m}\,\log^3\!N\right)
=O\!\left(\sqrt{r}\,m\,\log^2\!N+\frac{N^{1/(2r)}}{m}\,\log^3\!N\right).
\]
}
\end{proof}

{

\begin{remark}
In Step~\ref{step:vectorprq}, LLL reduction is not needed. The $3$-dimensional lattice
$L_j=\langle N,\ f_j(xX)^r,\ f_j(xX)^{r+1}\rangle\subset\mathbb{Z}^{r+2}$ with
$f_j(x)=x+M_j$ already contains two explicit short vectors that we can write down without any reduction:
\[
u_j \coloneqq \text{row}_2(B_j)=\big(\tbinom{r}{0}M_j^{r},\ \tbinom{r}{1}M_j^{r-1}X,\ \dots,\ \tbinom{r}{r}X^r,\ 0\big),
\]
and
\[
v_j \coloneqq \text{row}_3(B_j)-M_j\,\text{row}_2(B_j)
=\big(0,\ \tbinom{r}{0}M_j^{r}X,\ \tbinom{r}{1}M_j^{r-1}X^2,\ \dots,\ \tbinom{r}{r}X^{r+1}\big).
\]
\end{remark}
}
Finally we present the main factoring algorithm. In this algorithm, $N_0$ is a constant that is chosen large enough to ensure that the proof of correctness works for all $N \geq N_0$.

\begin{algorithm}[Factoring $r$-powers]\ \\   
\label{alg:factorprq}
\textit{Input:} An integer $N=p^{r}q\ge N_0$ with constants $0<c_{1}<c_{2}$ such that $c_{1}N^{1/2}<q<c_{2}N^{1/2}$.

\noindent\textit{Output:}
$p$ and $q$.
\begin{algorithmic}[1]
\If{$r>\frac{\log N}{32\log\log N}$} 
\For{$i=0,\dots,\log N$}
    \If{$i^r|N$} 
        \State Recover $p$ and $q$ and return. 
    \EndIf
\EndFor
\EndIf
\State
Apply Lemma~\ref{lemma:bigOrderprq} with $N$, $r$ and $\delta=N^{1/4r}\log^8 N$.
If any factors of $N$ are found, return. Otherwise, we obtain $\alpha \in \mathbb{Z}_N^*$ with $\ord_{N}(\alpha)>\delta$.
\State\label{step:prqsearch}%
Run Algorithm \ref{alg:searchprq} (the main search)
with the given $N$, {$c_1,c_2$, $m=\left[r^{-1/4}N^{1/4r}\log^{1/2} N\right]$} and $\alpha$.
Return $p$ and $q$.
\end{algorithmic}
\end{algorithm}

\begin{proposition}
Algorithm \ref{alg:factorprq} is correct (for suitable $N_0$),
and it runs in time
\[
O{(r^{1/4} N^{1/4r} \log^{5/2} N)}.
\]
\end{proposition}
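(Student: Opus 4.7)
The plan is to handle the two regimes separately, verify the input conditions for Algorithm \ref{alg:searchprq} in the non-trivial case, and then balance the two terms of Proposition \ref{pro:prqmain search} for the specific choice $m = \lceil r^{-1/4}N^{1/4r}\log^{1/2}N\rceil$.

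In the large-$r$ regime $r>\log N/(32\log\log N)$, the relation $p^r\mid N$ forces $p\le N^{1/r}=O(\mathrm{polylog}(N))$, so the direct enumeration over the candidate values of $p$ performed by the algorithm locates $p$ (and hence $q=N/p^r$) in $\mathrm{poly}(\log N)$ bit operations, which is comfortably within the target bound.

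For $r\le\log N/(32\log\log N)$, I would first apply Lemma \ref{lemma:bigOrderprq} with $\delta=N^{1/4r}\log^8 N$: its runtime is $O(\delta^{1/2}\log^2 N/(\log\log\delta)^{1/2})=O(N^{1/8r}\log^6 N/(\log\log N)^{1/2})$, which is negligible, and either the routine already returns a factor of $N$ or it yields some $\alpha\in\mathbb{Z}_N^*$ with $\mathrm{ord}_N(\alpha)>N^{1/4r}\log^8 N$. I then need to verify the two hypotheses of Algorithm \ref{alg:searchprq}, namely $m<p/r$ and $\mathrm{ord}_N(\alpha)>k=\lceil 2em\sqrt{r}\rceil$. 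Since $p=\Theta(N^{1/2r})$, the first inequality reduces up to constants to $r^{3/4}\log^{1/2}N<N^{1/4r}$; taking logarithms gives $\tfrac{3}{4}\log r+\tfrac{1}{2}\log\log N<(\log N)/(4r)$, and the assumption on $r$ makes the right-hand side exceed $8\log\log N$ while the left-hand side is $O(\log\log N)$, so the inequality holds for all $N\ge N_0$ with $N_0$ chosen large enough. For the second hypothesis, $2em\sqrt{r}=\Theta(r^{1/4}N^{1/4r}\log^{1/2}N)$ is dominated by $N^{1/4r}\log^8 N$ since $r^{1/4}\le\log^{1/4}N$ and $\log^{1/2}N\ll\log^8 N$.

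Once the hypotheses are verified, Proposition \ref{pro:prqmain search} gives a running time of $O(\sqrt{r}\,m\log^2 N+(N^{1/2r}/m)\log^3 N)$. Substituting $m=\lceil r^{-1/4}N^{1/4r}\log^{1/2}N\rceil$ makes the first term $\Theta(r^{1/4}N^{1/4r}\log^{5/2}N)$ and the second term $\Theta(r^{1/4}N^{1/4r}\log^{5/2}N)$, which together yield the claimed bound $O(r^{1/4}N^{1/4r}\log^{5/2}N)$. The main (and essentially only) subtlety is this balancing: the constraint $m<p/r$ caps $m$ from above, whereas pushing the giant-step count $N^{1/2r}/m$ down requires $m$ from below, and the unique parameter choice producing equal contributions is exactly $m=\lceil r^{-1/4}N^{1/4r}\log^{1/2}N\rceil$, which is simultaneously compatible with both constraints in this range of $r$.
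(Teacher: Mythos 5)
Your proof is correct and follows essentially the same route as the paper's: handle the large-$r$ regime by enumeration, invoke Lemma \ref{lemma:bigOrderprq} to obtain $\alpha$, check that $k=\lceil 2em\sqrt{r}\rceil$ is within the order bound $\delta=N^{1/4r}\log^8 N$, and balance the two terms of Proposition \ref{pro:prqmain search} at $m\approx r^{-1/4}N^{1/4r}\log^{1/2}N$. You are actually a bit more careful than the paper in explicitly verifying the input hypothesis $m<p/r$ of Algorithm \ref{alg:searchprq}, which the paper's proof leaves implicit.
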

\begin{proof}
For correctness, we first handle the case $r > \frac{\log N}{32 \log \log N}$ by enumerating all $p < \log N$. For the case $r < \frac{\log N}{32 \log \log N}$, we can recover $p$ and $q$ via the main search step. Moreover, in this case, we have $2em\sqrt{r}< N^{1/(4r)} \log^8 N$, which satisfies the requirement on the order size needed by the main search.

Regarding the algorithm's complexity: when $r > \frac{\log N}{32 \log \log N}$, the enumeration of $p < \log N$ is negligible. For the case $r < \frac{\log N}{32 \log \log N}$, finding $\alpha$ takes time less than $N^{1/(8r)} \log^4 N$, and the order checking is bounded by $N^{1/(4r)}$. Therefore, the most expensive step is the main search, which takes time 
\[
O\!\left({\sqrt{r}\,m\,\log^2\!N+\frac{N^{1/2r}}{m}\,\log^3\!N}\right)=O({r^{1/4} N^{1/4r} \log^{5/2} N}).
\]

\end{proof}

\begin{remark}
When $r = \Omega\!\left(\dfrac{\log N}{\log \log N}\right)$, we have 
$p = \mathrm{poly}(\log N)$, so $p$ can be found in $\mathrm{poly}(\log N)$ time via exhaustive search. Now consider the case where $r = o\!\left(\dfrac{\log N}{\log \log N}\right)$. 
In this regime, the running time of our algorithm is $O\!\left(r^{1/4} N^{1/4r} \log^{5/2} N\right)$. Compared to Strassen's method, which has complexity $O\left(N^{1/4r} \log^3 N\right)$~\cite[Proposition 2.5]{harvey2021exponent}, our method is strictly better.

\end{remark}

\section{Conclusion}
\label{sec:conclusion}
In this work, we present a novel deterministic integer factorization approach that merges Coppersmith's method with Harvey and Hittmeir's Baby-step Giant-step framework through a specialized rank-$3$ lattice construction. The key insight of our work is utilizing the second vector from LLL-reduced lattice bases—rather than the traditional shortest vector—to generate giant steps that avoid trivial collisions, resulting in more efficient factorization algorithms with improved asymptotic complexity bounds.

Our contributions span multiple factorization problems: For semiprimes with known bit size of $p$ or $q$, we achieve a logarithmic improvement in complexity compared to previous results. For numbers representing sums and differences of powers, our algorithm also provides significant logarithmic improvements. .

We also introduce several technical innovations that may be of independent interest: an improved approach to finding elements of high multiplicative order; a refined generalization of Harvey's deterministic factorization method for identifying $r$-power divisors; and an extension of Coppersmith's method that relaxes determinant constraints while preserving practical utility. 

An open question remains whether our approach could directly improve the general bound established by Harvey and Hittmeir \cite{harvey2022log}. Without additional information about the factors $p$ or $q$, our algorithm currently yields the same complexity bound $O\left(\frac{N^{1/5}\log^{16/5}N}{(\log\log N)^{3/5}}\right)$ as their original work.

\section*{Acknowledgments}
The author thanks the anonymous reviewers for their careful reading and helpful comments that improved both the clarity and quality of this paper. In particular, the author is grateful for the insightful suggestion to consider a BSGS-based acceleration for Algorithm~\ref{alg:findalpha-bsgs}. AI tools were used as typing assistants for grammar and basic editing. Yiming Gao and Honggang Hu were supported by National Cryptographic Science Foundation of China (Grant No. 2025NCSF01001), National Natural Science Foundation of China (Grant No. 62472397), and Quantum Science and Technology-National Science and Technology Major Project (Grant No. 2021ZD0302902). Yansong Feng and Yanbin Pan were supported by the National Natural Science Foundation of China (No. 62372445).

\bibliographystyle{amsalpha}
\bibliography{mybib}

@article{harvey2021exponent,
  author  = {David Harvey},
  title   = {An exponent one-fifth algorithm for deterministic integer factorisation},
  journal = {Math. Comp.},
  year    = {2021},
  volume  = {90},
  pages   = {2937-2950},
  doi     = {10.1090/mcom/3658}
}

@article{harvey2022log,
  author  = {David Harvey and Markus Hittmeir},
  title   = {A log-log speedup for exponent one-fifth deterministic integer factorisation},
  journal = {Math. Comp.},
  year    = {2022},
  volume  = {91},
  pages   = {1367-1379},
  doi     = {10.1090/mcom/3708},
}

@article{harvey2022deterministic,
  title={A deterministic algorithm for finding r-power divisors},
  author={Harvey, David and Hittmeir, Markus},
  journal={Research in Number Theory},
  volume={8},
  number={4},
  pages={94},
  year={2022},
  publisher={Springer}
}

@article{Lenstra1982factoring,
  author  = {A. K. Lenstra and H. W. {Lenstra, Jr.} and L. Lov{\'a}sz},
  title   = {Factoring Polynomials with Rational Coefficients},
  journal = {Math. Ann.},
  year    = {1982},
  volume  = {261},
  pages   = {515--534},
  doi     = {10.1007/BF01457454},
}

@article{hales2024generalization,
  title={A generalization of Lehman’s method},
  author={Hales, Jonathon and Hiary, Ghaith},
  journal={The Ramanujan Journal},
  pages={1--18},
  year={2024},
  publisher={Springer}
}

@article{lehman1974factoring,
  title={Factoring large integers},
  author={Lehman, R Sherman},
  journal={Math. Comp.},
  volume={28},
  number={126},
  pages={637--646},
  year={1974}
}

@article{markus2018babystep,
    author = {M. Hittmeir},
    title = {A babystep-giantstep method for faster deterministic integer factorization},
    journal = {Math. Comp.},
    volume={87},
    number={314},
    pages={2915–2935},
    year={2018}
}

@article{hittmeir2017deterministic,
  title={Deterministic factorization of sums and differences of powers},
  author={Hittmeir, Markus},
  journal={Math. Comp.},
  volume={86},
  number={308},
  pages={2947--2954},
  year={2017}
}

@book{galbraith2012mathematics,
  title={Mathematics of public key cryptography},
  author={Galbraith, Steven D},
  year={2012},
  publisher={Cambridge University Press}
}

@article{harvey2021integer,
  title={Integer multiplication in time {$O(n \log n)$}},
  author={Harvey, David and Van Der Hoeven, Joris},
  journal={Annals of Mathematics},
  volume={193},
  number={2},
  pages={563--617},
  year={2021},
  publisher={Department of Mathematics, Princeton University Princeton, New Jersey, USA}
}

@inproceedings{coppersmith1996finding,
  title={Finding a small root of a univariate modular equation},
  author={Coppersmith, Don},
  booktitle={International Conference on the Theory and Applications of Cryptographic Techniques(EUROCRYPT'96)},
  pages={155--165},
  year={1996},
  organization={Springer}
}

@inproceedings{howgrave1997finding,
  title={Finding small roots of univariate modular equations revisited},
  author={Howgrave-Graham, Nicholas},
  booktitle={IMA International Conference on Cryptography and Coding},
  pages={131--142},
  year={1997},
  organization={Springer}
}

@inproceedings{howgrave2001approximate,
  title={Approximate integer common divisors},
  author={Howgrave-Graham, Nick},
  booktitle={International cryptography and lattices conference},
  pages={51--66},
  year={2001},
  organization={Springer}
}

@phdthesis{may2003new,
  title={New RSA vulnerabilities using lattice reduction methods.},
  author={May, Alexander},
  year={2003},
  school={Citeseer}
}

@inproceedings{NS2016fasterlll,
  author    = {Neumaier, A. and Stehl\'{e}, D.},
  title     = {Faster {LLL}-type reduction of lattice bases},
  booktitle = {Proceedings of the ACM on International Symposium on Symbolic and Algebraic Computation, {ISSAC} 2016},
  publisher = {Association for Computing Machinery},
  year      = {2016},
  pages     = {373--380},
}

@article{iwaniec1978problem,
  title={On the problem of Jacobsthal},
  author={Iwaniec, Henryk},
  journal={Demonstratio Mathematica},
  volume={11},
  number={1},
  pages={225--232},
  year={1978},
  publisher={De Gruyter Open}
}

@inproceedings{shor1994algorithms,
  title={Algorithms for quantum computation: discrete logarithms and factoring},
  author={Shor, Peter W},
  booktitle={Proceedings 35th Annual Symposium on Foundations of Computer Science(FOCS'94)},
  pages={124--134},
  year={1994},
  organization={IEEE}
}

@book{crandall2005prime,
  title={Prime numbers: a computational perspective},
  author={Crandall, Richard E and Pomerance, Carl},
  volume={2},
  year={2005},
  publisher={Springer}
}

@article{lenstra1987factoring,
  title={Factoring integers with elliptic curves},
  author={Lenstra Jr, Hendrik W},
  journal={Annals of Mathematics},
  pages={649--673},
  year={1987},
  publisher={JSTOR}
}

@article{oznovich2025deterministically,
  title={On Deterministically Finding an Element of High Order Modulo a Composite},
  author={Oznovich, Ziv and Volk, Ben Lee},
  journal={arXiv preprint arXiv:2506.07668},
  year={2025}
}

\appendix
\section{Proof of Theorem~\ref{thm:prqfactorsmall}}
\label{sec:appendixProofprq}
\begin{proof}
Without loss of generality, we may assume $s \leq m-1$. Let $p = mx_0 + s$ where $p^r | N$. This implies $p \leq N^{1/r}$ and $|x_0| \leq p/m$. Let $t \equiv m^{-1} \pmod{N}$. We observe that $p | (st + x_0)$.

Define a sequence $\{X_i\}$ where $X_0 = N^{1/r}$, $X_1 = X_0/2, \ldots, X_k = X_0/2^k$ with $k = \lfloor \log N/r \rfloor$. Let $X_i = N^{\beta_i}$ where $\beta_{i+1} = \beta_i - \frac{1}{\log N}$.

For each interval $[X_{i+1}, X_i]$, consider the polynomial $f_r(x) = (st + x)^r$. When $p \in [X_{i+1}, X_i]$, we have:

$$
f_r(x_0) \equiv 0 \pmod{p^r}, \quad |x_0| \leq X_i/m = N^{\beta_i}/m
$$
Applying Lemma \ref{lem:fwh} with $\delta=r,b = p^r$, $b \geq N^\beta = X^r_{i+1}$, $\beta = r\beta_{i+1}$, and $c = X_i/m/N^{\beta^2/r}$, we obtain an upper bound for $c$:
$$
c = \frac{X_i}{mN^{\beta^2/r}} = \frac{N^{\beta_i-r\beta^2_{i+1}}}{m} = \frac{2N^{\beta_{i+1}-r\beta^2_{i+1}}}{m} \leq \frac{2N^{1/4r}}{m}
$$
Thus, for each interval $[X_{i+1}, X_i]$ where $i \in [k-1]$, we can find all $p$ satisfying $p \in [X_{i+1}, X_i]$ and $p^r | N$ in time:
$$
O\left(\lceil c\rceil\frac{\log^{6+3\epsilon} N}{\delta^{1+\epsilon}}\right)=O\left(\left\lceil\frac{ N^{1/4r}} {m}\right\rceil \frac{\log^{6+3\epsilon} N}{r^{1+\epsilon}} \right)
$$
We repeat this process for intervals $[X_k, X_{k-1}], \ldots, [X_2, X_1], [X_1, X_0]$. For $[0, X_k]$, we perform exhaustive search for $p \equiv s \pmod{m}$. With $k = \lfloor \log N/r \rfloor$ intervals, the total time complexity is:
$$
\left\lfloor \frac{\log N}{r}\right\rfloor O\left(\left\lceil\frac{N^{1/4r}}{m}\right\rceil \frac{\log^{6+3\epsilon} N}{r^{1+\epsilon}} \right) = O\left(\left\lceil\frac{N^{1/4r}}{m}\right\rceil \frac{\log^{7+3\epsilon} N}{r^{2+\epsilon}} \right)
$$
Since $X_k = N^{1/r}/2^k = O(1)$, the exhaustive search complexity is $O(\log N)$, which is negligible in the overall complexity.
\end{proof}

\section{Order-finding algorithms for $r$-powers}
\label{sec:appendixFindprq}
\begin{lemma}[~\cite{markus2018babystep}, Theorem 6.1]
\label{lemma:prqFingingOrderLemma}
There exists an algorithm with the following properties:
\begin{itemize}
    \item \textbf{Input:} $N \in \mathbb{N}$, $T \leq N$ and $a \in \mathbb{Z}_N^*$.
    \item \textbf{Output:} If $\operatorname{ord}_N(a) \leq T$, then the output is $\operatorname{ord}_N(a)$; otherwise, the output is '\textit{ord}$_N(a) > T$'.
\end{itemize}
The runtime complexity is bounded by 
$$
O\left(\frac{T^{1/2}}{\sqrt{\log \log T}}  \cdot \log^2 N \right).
$$
\end{lemma}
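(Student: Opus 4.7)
The plan is to combine the classical Baby-Step Giant-Step strategy with the fast polynomial toolkit of Lemmas~\ref{lem:tree} and~\ref{lem:bluestein} to locate the order of $a$ in the range $[1,T]$, and then reduce to the exact order. Set $s\coloneqq\lceil\sqrt{T/\log\log T}\rceil$ and write any candidate $d\in[1,T]$ as $d=sj+i$ with $0\le i<s$ and $0\le j\le\lceil T/s\rceil$. Compute the baby steps $B_i\coloneqq a^i\bmod N$, build the polynomial $P(x)=\prod_{i=0}^{s-1}(x-B_i)\in\mathbb{Z}_N[x]$ via Lemma~\ref{lem:tree}, and evaluate $P$ at $\gamma,\gamma^2,\dots,\gamma^{\lceil T/s\rceil}$, where $\gamma\coloneqq a^{-s}\bmod N$, via Lemma~\ref{lem:bluestein}. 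A zero evaluation $P(\gamma^j)\equiv 0\pmod N$ certifies a collision $a^i\equiv\gamma^j\pmod N$, i.e.\ $a^{sj+i}\equiv 1\pmod N$; a short linear scan within the matching block extracts the precise $(i,j)$, yielding a multiple of $\ord_N(a)$ that is at most $T$.

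Once such a multiple $d\le T$ is in hand, I would recover the exact order by trial-dividing $d$ into small prime factors (the trial bound is negligible compared to $\sqrt{T}$) and, for each $p\mid d$, iteratively replacing $d$ by $d/p$ as long as $a^{d/p}\equiv 1\pmod N$. This final reduction costs only $O(\log T)$ modular exponentiations, which is absorbed in the leading term. If the BSGS sweep produces no collision, the algorithm outputs ``$\ord_N(a)>T$''.

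To obtain the $\sqrt{\log\log T}$ saving, I would refine the decomposition using sieving on small primes in the spirit of Lemma~\ref{lem:Prime Product Construction}: pick a product $m$ of distinct small primes with $m/\varphi(m)=\Theta(\log\log T)$, restrict the baby-step indices to residues coprime to $m$, and reorganize the giant-step exponents accordingly, so that the effective number of baby/giant steps is driven down by a factor of $\Theta(\sqrt{\log\log T})$ on each side. Combining this with the $O(\log N\log\log N)$ cost per modular multiplication and the $O(\log^2 N)$ cost of the Bluestein evaluation from Lemma~\ref{lem:bluestein} (which dominates the product-tree construction after the sieving) then yields the stated runtime $O\!\left(T^{1/2}\log^2 N/\sqrt{\log\log T}\right)$.

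The main obstacle I foresee is compatibility between the sieve and the unknown order: $\ord_N(a)$ need not be coprime to $m$, so sieved BSGS lists may fail to cover certain orders. I would handle this by a short preprocessing loop that treats the $m$-smooth part of $d$ separately (checking $a^{d'}\equiv 1\pmod N$ for all $m$-smooth $d'\le T$, which form a set of polylogarithmic size) and reserving the BSGS only for the ``$m$-rough'' part of $d$. With this split, every $d\le T$ is certified either by the preprocessing phase or by a collision in the sieved BSGS, and the runtime analysis goes through as described.
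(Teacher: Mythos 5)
The paper itself gives no proof of this lemma; it is imported verbatim from \cite{markus2018babystep} (Theorem~6.1). Judged on its own, your sketch has the right skeleton (a BSGS to locate a multiple of the order, stripping factors afterward, sieving for the $\sqrt{\log\log T}$ gain), but it breaks down in two places, one structural and one quantitative.

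Structurally, your patch for the smooth part does not work. You correctly notice that sieving the baby-step exponents to residues coprime to a primorial $m$ is unsound because $\ord_N(a)$ may share factors with $m$, and you propose to check $a^{d'}\equiv 1\pmod N$ for ``all $m$-smooth $d'\le T$,'' claiming that set is polylogarithmic. It is not: to achieve $\varphi(m)/m=\Theta(1/\log\log T)$ the largest prime dividing $m$ must be $\Theta(\log T)$, and the number of $(\log T)$-smooth integers up to $T$ is at least $T^{\Omega(1/\log\log T)}=\exp\!\bigl(\Omega(\log T/\log\log T)\bigr)$, which is far beyond polylogarithmic, so enumerating them blows the budget. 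The correct maneuver is to \emph{kill} the smooth part rather than enumerate it: replace $a$ by $\beta:=a^{m'}$ with $m'=\prod_{p\mid m}p^{\lceil\log_p T\rceil}$ (here $\log m'=O(\log^2 T/\log\log T)$, so this exponentiation is polylog). Then $\ord_N(\beta)=\ord_N(a)/\gcd(\ord_N(a),m')$ is coprime to $m$ by construction and is $\le T$ whenever $\ord_N(a)\le T$, so a BSGS over residues coprime to $m$ --- with $\varphi(m)\approx T/m\approx\sqrt{T/\log\log T}$ baby and giant steps after balancing $m\approx\sqrt{T\log\log T}$ --- is legitimate. The smooth cofactor $\gcd(\ord_N(a),m')$ is then recovered from $\gamma:=a^{\ord_N(\beta)}$ by stripping prime powers of $m'$ one by one in polylog time, and the product $\ord_N(a)=\gcd(\ord_N(a),m')\cdot\ord_N(\beta)$ is finally tested against $T$. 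Without the exponentiation by $m'$, your ``preprocessing loop'' supplies no actual mechanism, and the sieved BSGS simply misses orders that share a factor with $m$.

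Quantitatively, Lemmas~\ref{lem:tree} and~\ref{lem:bluestein} are the wrong instruments here. Lemma~\ref{lem:tree} already costs $O(n\log^3 N)$ to build the product polynomial on $n$ baby steps, so with $n=\Theta(\sqrt{T}/\sqrt{\log\log T})$ you pay $\Theta(\sqrt{T}\log^3 N/\sqrt{\log\log T})$, a full $\log N$ above the target; and your own unsieved parameter $s=\lceil\sqrt{T/\log\log T}\rceil$ puts the Bluestein evaluation at $\Theta(\sqrt{T\log\log T}\log^2 N)$, a $\log\log T$ factor worse than even the unsieved bound. The polynomial toolkit is needed in Algorithm~\ref{alg:collisions} because the collisions there live modulo an unknown prime divisor of $N$; here the collision $\beta^i=\beta^{-mj}$ is an exact equality in $\mathbb{Z}_N$, so an ordinary sort-and-match of the $\varphi(m)+T/m$ stored $\log N$-bit values already runs in $O\bigl((\varphi(m)+T/m)\log^2 N\bigr)=O(\sqrt{T}\log^2 N/\sqrt{\log\log T})$. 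With your parameters the polynomial approach does not meet the claimed runtime, so this substitution is not a harmless stylistic choice.
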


\begin{algorithm}[Order-finding algorithms]\ \\
\label{alg:prqfindBigAlpha}
\textit{Input:}
\begin{inputoutputlist}
\item $N \in \mathbb{N}$ and $\delta \leq N$. 
\end{inputoutputlist}
\textit{Output:}
\begin{inputoutputlist}
\item Either some $a \in \mathbb{Z}_N^*$ such that $\operatorname{ord}_N(a) > \delta$, or a nontrivial factor of $N$, or ``$N$ is $r$ power free''.
\end{inputoutputlist}
\begin{algorithmic}[1]
\State Set $M_1 = 1$ and $a = 2$
\For{$e = 1, 2, \ldots$} 
    \While{$a \nmid N$ \textbf{and} $a^{M_e} \equiv 1 \pmod{N}$}
        \State $a \gets a + 1$
    \EndWhile
    \If{$a \mid N$}
        \State \Return $a$ as a nontrivial factor of $N$, or if $a = N$, return 'N is prime'.
    \EndIf
    \State Apply Lemma~\ref{lemma:prqFingingOrderLemma} with $T = \delta^{1/2}/\log^2N.$\label{step:findorderfirst}
    \If{$\operatorname{ord}_N(a)$ is not found}
        \State Apply Lemma~\ref{lemma:prqFingingOrderLemma} with $T = \delta.$\label{step:findordersecond}
        \If{$\operatorname{ord}_N(a)$ is not found}
            \State \Return $a$ as an element with $\operatorname{ord}_N(a) > \delta.$
        \EndIf
    \EndIf
    \State Set $m_e = \operatorname{ord}_N(a)$ and compute the prime factorization of $m_e.$ \label{step:factorme}
    \For{each prime $p$ dividing $m_e$}
        \If{$\gcd(N, a^{m_e / p} - 1) \neq 1$}
            \State \Return $\gcd(N, a^{m_e / p} - 1)$ as a nontrivial factor of $N.$
        \EndIf
    \EndFor
    \State Set $M_{e+1} \gets \operatorname{lcm}(M_e, m_e)$
    \If{$M_{e+1} \geq \delta^{1/2}/\log^2 N$} \label{step:testiforderenough}
        \State Apply Theorem~\ref{thm:prqfactorsmall} with $r,s = 1,m = M_{e+1}.$ \label{step:findr-power}
        \State \Return some nontrivial factor of $N$ or ``$N$ is $r$ power free''.
    \EndIf
    \State $a \gets a + 1$
\EndFor
\end{algorithmic}
\end{algorithm}

\begin{theorem}
Algorithm~\ref{alg:prqfindBigAlpha} is correct. Assuming $N^{1/4r}\log^8  N \leq \delta$, the runtime complexity of Algorithm~\ref{alg:prqfindBigAlpha} is bounded by 
\[
O\left(\frac{\delta^{1/2}\log^2 N}{(\log\log \delta)^{1/2}} \right).
\]
\end{theorem}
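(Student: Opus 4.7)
The plan is to verify correctness via a loop invariant and then bound the running time by isolating the single step whose cost matches the claimed complexity.

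For correctness, I would maintain the invariant that at the start of outer iteration $e$, every prime $p \mid N$ satisfies $p \equiv 1 \pmod{M_e}$; in particular this holds for every prime $p$ with $p^r \mid N$. The base case $M_1 = 1$ is trivial. For the inductive step, the skip loop exits either with $a \mid N$ (a factor found, return) or with $a^{M_e} \not\equiv 1 \pmod N$, so $m_e \coloneqq \operatorname{ord}_N(a)$ satisfies $m_e \nmid M_e$. If some $\gcd(N, a^{m_e/q}-1)$ is nontrivial, it cannot equal $N$ (otherwise $a^{m_e/q}\equiv 1\pmod N$ would contradict the minimality of $m_e$), so it is a proper nontrivial divisor and we return. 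Otherwise, for every prime $p \mid N$ and every prime $q \mid m_e$ we have $\operatorname{ord}_p(a) \mid m_e$ yet $\operatorname{ord}_p(a) \nmid m_e/q$, which forces $\operatorname{ord}_p(a) = m_e$ and hence $m_e \mid p-1$. Then $M_{e+1} = \operatorname{lcm}(M_e, m_e)$ divides $p-1$, preserving the invariant. When line~\ref{step:findr-power} is reached, the invariant together with Theorem~\ref{thm:prqfactorsmall} (applied with $s=1$ and $m = M_{e+1}$) exhibits every prime $p$ with $p^r \mid N$; we either return a nontrivial factor or certify that $N$ is $r$-power free.

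For the running time, three observations drive the analysis. First, in any outer iteration that does not trigger line~\ref{step:findr-power}, $M_{e+1}/M_e = m_e/\gcd(M_e, m_e) \geq 2$, so the number of outer iterations is $O(\log \delta) = O(\log N)$. Second, each cheap order-finding call (with $T = \delta^{1/2}/\log^2 N$) costs $O(\delta^{1/4}\log N/\sqrt{\log\log\delta})$ by Lemma~\ref{lemma:prqFingingOrderLemma}; summed over all iterations this totals $O(\delta^{1/4}\log^2 N/\sqrt{\log\log\delta})$, which is negligible. Third, the expensive call (with $T=\delta$) is reached only after the cheap call has established $\operatorname{ord}_N(a) > \delta^{1/2}/\log^2 N$; as soon as it returns any $m_e$, the update $M_{e+1} \geq m_e > \delta^{1/2}/\log^2 N$ triggers line~\ref{step:findr-power} in the same iteration, so the expensive call occurs at most once and contributes $O(\delta^{1/2}\log^2 N/\sqrt{\log\log\delta})$, matching the target. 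The cost of line~\ref{step:findr-power} itself is $O(\lceil N^{1/4r}/M_{e+1}\rceil \log^{7+3\epsilon} N / r^{2+\epsilon})$; using $M_{e+1} \geq \delta^{1/2}/\log^2 N$ together with the hypothesis $\delta \geq N^{1/4r}\log^8 N$, one checks this is also dominated by the expensive order-finding cost.

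The main obstacle is controlling the skip loop, in which $a$ is repeatedly incremented while $a \nmid N$ and $a^{M_e} \equiv 1 \pmod N$: each tested value costs $O(\log^3 N)$ for one modular exponentiation, so a superpolylogarithmic bound on the range of $a$ would dominate the total. Since $M_e < \delta \leq N$ throughout the relevant part of the loop (otherwise termination has already occurred), the subgroup $\{x \in \mathbb{Z}_N^* : x^{M_e} \equiv 1\}$ is a proper subgroup of $\mathbb{Z}_N^*$, and a standard counting argument shows that only a small density of integers in $[2,N]$ lie in it. This is exactly the issue resolved in the proof of Lemma~\ref{lemma:improvehit2018} (the $r=1$ case inherited from~\cite{markus2018babystep}), and the same argument carries over verbatim, because it depends only on the group structure of $(\mathbb{Z}/N\mathbb{Z})^\times$ and not on the exponent $r$. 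With this bound in place, the total cost matches the claimed $O(\delta^{1/2}\log^2 N/\sqrt{\log\log\delta})$.
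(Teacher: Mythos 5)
Your overall structure matches the paper's: deflect most of the correctness to \cite{markus2018babystep}, isolate the single expensive order-finding call (Step~\ref{step:findordersecond}) as the dominant cost, observe it occurs at most once because $M_{e+1}$ then immediately exceeds $\delta^{1/2}/\log^2 N$ and triggers Step~\ref{step:findr-power}, and verify that the cost of Step~\ref{step:findr-power} is dominated under the hypothesis $N^{1/4r}\log^8 N \le \delta$. Your loop-invariant argument ($M_e \mid p-1$ for every prime $p \mid N$) is a helpful elaboration that the paper itself leaves implicit. Two points, however, need correcting.

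First, your treatment of the skip loop is the weakest part, and the reasoning you offer is wrong even though the conclusion is right. You claim that $M_e < \delta \le N$ implies $\{x \in \mathbb{Z}_N^* : x^{M_e} \equiv 1\}$ is a proper subgroup; this is false in general (if $\lambda(N) \mid M_e$ the subgroup is all of $\mathbb{Z}_N^*$ even though $M_e$ can be far below $N$), and ``small density of integers in $[2,N]$'' is in any case not the bound that is actually needed. The argument inherited from \cite{markus2018babystep}, and the one the paper's complexity analysis silently relies on, is sharper and local: within a single round $e$, the skip loop runs at most $M_e$ times, because any $M_e+1$ consecutive integers that all satisfy $a^{M_e}\equiv 1\pmod N$ would reduce to $M_e+1$ distinct residues modulo the smallest prime $p \mid N$ (or include one divisible by $p$), all forced into the order-$\le M_e$ subgroup $\{x \in \mathbb{Z}_p^* : x^{M_e}=1\}$, which is impossible. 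It is this per-round bound of $M_e < \delta^{1/2}/\log^2 N$, multiplied by $O(\log^2 N \log\log N)$ per modular exponentiation and $O(\log\delta)$ rounds, that gives the paper's negligible $O(\delta^{1/2}\log N \log\log N)$ bound for the skip loops.

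Second, the citation is circular: Lemma~\ref{lemma:improvehit2018} is itself proved in this paper by setting $r=1$ in Lemma~\ref{lemma:bigOrderprq}, which is precisely the statement under discussion. What you want to cite for the skip-loop bound is Theorem~6.3 of \cite{markus2018babystep} directly; your instinct that the argument is $r$-independent because it concerns only the group structure of $\mathbb{Z}_N^*$ is correct, but it should point outward, not back at the theorem being proved.
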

\begin{proof}
Our algorithm differs from \cite[Algorithm 6.2]{markus2018babystep} in two aspects: we set $T=\delta^{1/2}/\log^2 N$ instead of $T=\delta^{1/3}$ in Steps \ref{step:findorderfirst} and \ref{step:testiforderenough}, and we apply Theorem \ref{thm:prqfactorsmall} for detecting r-powers of $N$ in Step \ref{step:findr-power}. Since Theorem \ref{thm:prqfactorsmall} identifies all primes $p$ satisfying $p \equiv s \pmod{m}$ and $p^r | N$, these modifications preserve the correctness established in the proof of \cite[Theorem 6.3]{markus2018babystep}. We now analyze the time complexity.

Let us first examine the running time of Step \ref{step:findr-power}. When the algorithm reaches this step, we have $M_e \geq \delta^{1 / 2}/\log^2 N$. Given our assumption that $N^{1 / 4r} \log^8 N\leq \delta$, Theorem \ref{thm:prqfactorsmall} bounds the computational cost by:
$$
O\left(\left\lceil\frac{ N^{1/4r}} {M_e}\right\rceil \frac{\log^{7+3\epsilon} N}{r^{2+\epsilon}} \right)=
O\left(\frac{\delta^{1/2}\log^{1+3\epsilon} N}{r^{2+\epsilon}} \right).
$$
This shows that the cost of Step \ref{step:findr-power} is asymptotically negligible.

When the algorithm reaches Step \ref{step:findordersecond}, it terminates within the claimed running time. By Lemma \ref{lemma:prqFingingOrderLemma}, Step \ref{step:findordersecond} requires
$$
O\left(\frac{\delta^{1 / 2}}{\sqrt{\log \log \delta}} \cdot \log ^2 N\right).
$$
If $\operatorname{ord}_N(a)$ is not found, the algorithm terminates. Otherwise, assuming $\operatorname{ord}_N(a) \leq \delta$, we note that $m_e>\delta^{1 / 2}/\log^2 N$ since the algorithm reached Step \ref{step:findordersecond}. This implies $M_e \geq m_e>\delta^{1 / 2}/\log^2 N$, leading to Step \ref{step:findr-power}, whose negligible runtime was analyzed above.

For the $e$-th iteration of the main loop, each while loop execution in Steps 3-4 takes $O(\mathsf{M}(\log N) \log M_e)=O(\log^2N\log\log N)$ operations, with at most $M_e<\delta^{1 / 2}/\log^2 N$ iterations. Thus, the total cost is bounded by 
$$
O\left(\delta^{1 / 2}\log\log N\right).
$$
Step \ref{step:findorderfirst} requires $O\left(\delta^{1 / 4+o(1)}\right)$ bit operations by Lemma \ref{lemma:prqFingingOrderLemma}. If $\operatorname{ord}_N(a)$ is found and since $m_e<\delta^{1 / 2}/\log^2 N$, factoring $m_e$ in Step \ref{step:factorme} via trial division costs $O\left(\delta^{1 / 4+o(1)}\right)$ bit operations. All remaining computations are polynomial-time.

Therefore, one complete main loop iteration requires $O\left(\delta^{1 / 2}\log\log N\right)$ bit operations. Since the value of $M_{e+1}$ is at least twice as large as $M_e$,  it iterates at most $O(\log \delta)$ until $M_e$ reaches $\delta^{1 / 2}/\log^2 N$. The total time until reaching either Step \ref{step:findordersecond} or Step \ref{step:findr-power} is $O\left(\delta^{1 / 2}\log N\log\log N\right)$, which is asymptotically negligible. This completes the proof.
\end{proof}
\end{document}